\title[On The Crepant Resolution Conjecture]
{On The Crepant Resolution Conjecture  In The Local Case}
\author{Tom Coates}
\address{Department of Mathematics\\
Imperial College London\\ 
180 Queen's Gate\\
London SW7 2AZ 
\\UK}
\email{t.coates@imperial.ac.uk}
\newcommand{\PP}{\mathbb{P}}
\newcommand{\CC}{\mathbb{C}}
\newcommand{\CCP}{\mathbb{CP}}
\newcommand{\ZZ}{\mathbb{Z}}
\newcommand{\RR}{\mathbb{R}}
\newcommand{\QQ}{\mathbb{Q}}
\newcommand{\Cstar}{{\CC^\times}}
\newcommand{\cF}{\mathcal{F}}
\newcommand{\cHX}{\mathcal{H}_\cX}
\newcommand{\cHY}{\mathcal{H}_Y}
\newcommand{\cHZ}{\mathcal{H}_\cZ}
\newcommand{\cO}{\mathcal{O}}
\newcommand{\ev}{\mathrm{ev}}
\newcommand{\be}{\mathbf{e}}
\newcommand{\lrl}{\mathbf{0}}
\renewcommand{\(}{\left(}
\renewcommand{\)}{\right)}
\DeclareMathOperator{\Res}{Res}
\newcommand{\fun}{\mathbf{1}}
\newcommand{\cB}{\mathcal{B}}
\newcommand{\cC}{\mathcal{C}}
\newcommand{\cE}{\mathcal{E}}
\newcommand{\cX}{\mathcal{X}}
\newcommand{\cY}{\mathcal{Y}}
\newcommand{\cZ}{\mathcal{Z}}
\newcommand{\cIX}{\mathcal{IX}}
\newcommand{\cLX}{\mathcal{L}_{\mathcal{X}}}
\newcommand{\cLY}{\mathcal{L}_Y}
\newcommand{\cLZ}{\mathcal{L}_\cZ}
\newcommand{\correlator}[1]{\left \langle #1 \right \rangle}
\newcommand{\smcorrelator}[1]{\big \langle #1 \big \rangle}
\newlength{\mybracketspacing}
\DeclareMathOperator{\Eff}{Eff}
\DeclareMathOperator{\NETT}{NETT}
\newcommand{\U}{\mathbb{U}}
\DeclareMathOperator{\QC}{QC}
\newcommand{\CR}{\underset{\scriptscriptstyle \text{CR}}{\cup}}
\newcommand{\JJ}{J^{\text{big}}}
\DeclareMathOperator{\fr}{frac}
\newcommand{\tw}{{\text{tw}}}
\theoremstyle{plain}
\newtheorem{thm}{Theorem}[section]
\newtheorem{proposition}[thm]{Proposition}
\newtheorem{conj}[thm]{Conjecture}
\newtheorem*{conj*}{Conjecture}
\newtheorem{cor}[thm]{Corollary}
\newtheorem*{cor*}{Corollary}
\theoremstyle{definition}
\newtheorem*{rem*}{Remark}
\newcommand{\cM}{\mathcal{M}}
\newcommand{\cI}{\mathcal{I}}
\renewcommand{\cH}{\mathcal{H}}
\renewcommand{\cL}{\mathcal{L}}
\newcommand{\GIT}[1]{/\!\!/_{\kern-.2em #1 \kern0.1em}}
\newcommand{\fp}{\mathfrak{p}}
\newcommand{\rank}{\operatorname{rank}}
\newcommand{\tti}{{\mathtt{i}}}
\begin{document}

\begin{abstract}
  In this paper we analyze four examples of birational transformations
  between local Calabi--Yau $3$-folds: two crepant resolutions, a
  crepant partial resolution, and a flop.  We study the effect of
  these transformations on genus-zero Gromov--Witten invariants,
  proving the Coates--Corti--Iritani--Tseng/Ruan form of the Crepant
  Resolution Conjecture in each case.  Our results suggest that this
  form of the Crepant Resolution Conjecture may also hold for more
  general crepant birational transformations.  They also suggest that
  Ruan's original Crepant Resolution Conjecture should be modified, by
  including appropriate ``quantum corrections'', and that there is no
  straightforward generalization of either Ruan's original Conjecture
  or the Cohomological Crepant Resolution Conjecture to the case of
  crepant partial resolutions. Our methods are based on mirror
  symmetry for toric orbifolds.
\end{abstract}

\maketitle

\section{Introduction}

Suppose that $\cX$ is an algebraic orbifold and that $\cY$ is an
orbifold or algebraic variety which is birational to $\cX$.  It is
natural to try to understand the relationship between the quantum
cohomology of $\cX$ and that of $\cY$.  In this paper we analyze four
examples of this situation --- two crepant resolutions, a crepant
partial resolution, and a flop --- which together exhibit some of the
range of phenomena which can occur.  The spaces that we consider are
local Calabi--Yau $3$-folds.  Our methods are based on mirror symmetry
for toric orbifolds.

The small quantum cohomology $\QC(\cX)$ of an orbifold $\cX$ is a
family of algebras depending on so-called \emph{quantum parameters}.
It arises in string theory as the chiral ring of the topological
$A$-model with target space $\cX$; from this point of view the quantum
parameters are co-ordinates on the stringy K\"ahler moduli space $\cM$
of $\cX$.  It is expected that the chiral rings form a family of
algebras over the whole of $\cM$ and that this family coincides with
$\QC(\cX)$ near the so-called \emph{large radius limit point}
$\lrl_\cX$ of $\cM$.  Other target spaces $\cY$ which are birational to
$\cX$ are expected to correspond to other limit points $\lrl_\cY$ of
$\cM$; this suggests that if $\cX$ and $\cY$ are birational then the
relationship between $\QC(\cX)$ and $\QC(\cY)$ should involve analytic
continuation in the quantum parameters.

A precise mathematical formulation of this was given by Ruan in his
influential Crepant Resolution Conjecture.  To state it, we need to
choose co-ordinates on (patches of) the stringy K\"ahler moduli space
$\cM$.  Suppose that $\cY \to X$ is a crepant resolution or partial
resolution of the coarse moduli space $X$ of $\cX$.  A choice of basis
$\varphi_1,\ldots,\varphi_r$ for $H^2(\cY;\CC)$ defines co-ordinates
$t_i$, $1 \leq i \leq r$ on $H^2(\cY;\CC)$, and hence defines
\emph{exponentiated flat co-ordinates} $q_i = e^{t_i}$, $1 \leq i \leq
r$, on a neighbourhood of $\lrl_\cY$ in $\cM$.  Similarly a choice of
basis $\phi_1,\ldots,\phi_s$ for $H^2(\cX;\CC)$ defines exponentiated
flat co-ordinates $u_i$, $1 \leq i \leq s$, near $\lrl_\cX$ in
$\cM$. We take $\varphi_1,\ldots,\varphi_r$ to be primitive integer
vectors on the rays of the K\"ahler cone for $\cY$ and
$\phi_1,\ldots,\phi_s$ to be primitive integer vectors on the rays of
the K\"ahler cone for $\cX$.  Because $\cY \to X$ is a (partial)
resolution there is a natural embedding $j:H^2(\cX;\QQ) \to
H^2(\cY;\QQ)$ which identifies the K\"ahler cone for $\cX$ with a face
of the K\"ahler cone for $\cY$.  We can therefore insist that
$j(\phi_i) = r_i \varphi_i$ for some rational numbers $r_i$, $1 \leq i
\leq s$.  The presence of these rational numbers reflects the fact
that the embedding $j$ does not in general identify the integer
lattices in $H^2(\cX;\QQ)$ and $H^2(\cY;\QQ)$.

The variables $q_1,\ldots,q_r$ and $u_1,\ldots,u_s$ thus defined are
the quantum parameters: the parameters on which the small quantum
cohomology algebras $\QC(\cY)$ and respectively $\QC(\cX)$ depend.
Ruan's Conjecture asserts that if $\cY \to X$ is a crepant resolution
then there are roots of unity $\omega_i$, $1 \leq i \leq r$, and a
choice of path of analytic continuation such that $\QC(\cX)$ is
isomorphic to the algebra obtained from $\QC(\cY)$ by analytic
continuation in the $q_i$ followed by the change of variables
\begin{equation}
  \label{eq:Ruancov}
  q_i =
  \begin{cases}
    \omega_i u_i^{r_i} & 1 \leq i \leq s \\
    \omega_i & s < i \leq r.
  \end{cases}
\end{equation}
One consequence of this is the Cohomological Crepant Resolution
Conjecture (CCRC) \cite{Ruan:CCRC}, which asserts that the Chen--Ruan
orbifold cohomology algebra of $\cX$ is isomorphic to the algebra
obtained from the small quantum cohomology of $\cY$ by analytic
continuation in the $q_i$ followed by the change of variables
\begin{equation*}
  q_i =
  \begin{cases}
    0 & 1 \leq i \leq s \\
    \omega_i & s < i \leq r.
  \end{cases}
\end{equation*}
These conjectures have been verified in a number of examples
\citelist{\cite{Perroni}\cite{Bryan--Graber--Pandharipande}\cite{Bryan--Graber}\cite{Boissiere--Mann--Perroni:1}\cite{CCIT:crepant1}\cite{Wise}\cite{CCIT:typeA}\cite{Bryan--Gholampour}\cite{Gillam}\cite{Boissiere--Mann--Perroni:2}}.

Recent progress in both mathematics and physics suggests, however,
that Ruan's Conjecture should be modified: that it is not an accurate
reflection of the physical picture.  In essence this is because when
we identify the family of algebras over $\cM$ (\emph{i.e.} the chiral
rings) with $\QC(\cX)$ and $\QC(\cY)$, we need to use exponentiated
flat co-ordinates near $\lrl_\cX$ and $\lrl_\cY$.  And even though the
family of algebras near $\lrl_\cX$ is related to the family of
algebras near $\lrl_\cY$ by analytic continuation, the analytic
continuation of exponentiated flat co-ordinates near $\lrl_\cX$
\emph{will not in general give exponentiated flat co-ordinates near
  $\lrl_\cY$}.  Thus we need also to analyze how the two co-ordinate
systems are related.  In some examples this has been done by
Aganagic--Bouchard--Klemm \cite{ABK} using \emph{ad hoc} methods and
by Coates--Corti--Iritani--Tseng \cite{CCIT:crepant1} in a more
systematic fashion; all of their examples satisfy the original Ruan
Conjecture.  One contribution of this paper is to give the first
example (Example II below) of a crepant resolution where the change in
exponentiated flat co-ordinate systems is sufficiently drastic that
the original Ruan Conjecture probably fails.  Our other examples
suggest that this failure cannot easily be fixed, and that a different
approach is needed.

In recent joint work with Corti, Iritani, and Tseng
\cite{CCIT:crepant1} we proposed\footnote{Similar ideas occurred in
  unpublished work of Ruan; an expository account can be found in
  \cite{Coates--Ruan}.} such a different approach, giving a new
conjectural picture of the relationship between the Gromov--Witten
theory of $\cX$ and that of $\cY$.  Our conjecture was phrased in
terms of Givental's symplectic formalism
\citelist{\cite{Coates--Givental:QRRLS}\cite{Givental:symplectic}}.
Genus-zero Gromov--Witten invariants of $\cX$ (and respectively $\cY$)
are encoded in a Lagrangian submanifold-germ $\cLX$ in a symplectic
vector space $\cHX$ (respectively $\cL_\cY \subset \cH_\cY$).  As
$\cLX$ and $\cL_\cY$ are germs of submanifolds it makes sense to
analytically continue them, and we conjectured the existence of a
linear symplectic isomorphism $\U:\cHX \to \cH_\cY$ satisfying some
quite restrictive conditions such that after analytic continuation we
have $\U(\cLX) = \cL_\cY$.  We also proved our conjecture when $\cX$
is one of the weighted projective spaces $\PP(1,1,2)$ or
$\PP(1,1,1,3)$ and $\cY \to X$ is a crepant resolution.

Our conjecture has consequences for quantum cohomology: it implies
both the Cohomological Crepant Resolution Conjecture and a modified
version of Ruan's Conjecture, each with the caveat that we must allow
the quantities $\omega_i$ to be arbitrary constants rather than roots
of unity.  (In the examples below the $\omega_i$ turn out to be roots
of unity and so the caveat disappears; Iritani has suggested an
attractive conceptual reason for this to be true in general
\cite{Iritani:integral}.)\phantom{.} The modified version of Ruan's
Conjecture has an additional hypothesis, that $\cX$ be
\emph{semi-positive}, and replaces the change of variables
\eqref{eq:Ruancov} by $ q_i = f_i(u_1,\ldots,u_s)$ where
\begin{equation*}
  f_i(u_1,\ldots,u_r) = 
  \begin{cases}
    \omega_i u_i^{r_i} + \text{higher order terms in $u_1,\ldots,u_r$}
    & 1 \leq i \leq s \\
    \omega_i + \text{higher order terms in $u_1,\ldots,u_r$} & s < i \leq r.
  \end{cases}
\end{equation*}
Thus we get a ``quantum corrected'' version of Ruan's original
conjecture.

In this paper we consider four examples:
\begin{enumerate}
\item[(I)] the crepant resolution of $\cX = \big[\CC^3/\ZZ_3\big]$, where $\ZZ_3$
  acts with weights $(1,1,1)$;
\item[(II)] the crepant resolution of the canonical bundle $\cX =
  K_{\PP(1,1,3)}$;
\item[(III)] the crepant partial resolution of $\cX = \big[\CC^3/\ZZ_5\big]$, where
  $\ZZ_5$ acts with weights $(1,1,3)$;
\item[(IV)] a toric flop with $\cX  = \cO_{\PP(1,2)}(-1)^{\oplus 3}$
  and $\cY = \cO_{\PP^2}(-1) \oplus \cO_{\PP^2}(-2)$.
\end{enumerate}
We prove the Coates--Corti--Iritani--Tseng/Ruan Crepant Resolution
Conjecture in each case.  This has implications as follows: \medskip
\begin{center}
  \begin{tabular}{@{}ccccc@{}} \toprule 
    & \multicolumn{4}{c}{Conjecture} \\ \cmidrule(r){2-5} 
    Example & CCIT/Ruan & CCRC & original Ruan &
    modified Ruan \\ \midrule  
    I & \checkmark& \checkmark& \checkmark & \checkmark\\
    II & \checkmark& \checkmark&?& \checkmark\\
    III & \checkmark&?&?&?\\
    IV & \checkmark&n/a&n/a&n/a\\ \bottomrule 
  \end{tabular} 
\end{center} \medskip I expect that wherever there is a ``?'' in this
table, the corresponding conjecture fails to hold, so that for example
the original form of Ruan's Conjecture fails in Example~II and the
modified form of Ruan's Conjecture fails in Example~III.  It is
difficult to prove these assertions, as this would involve ruling out
every possible choice of path of analytic continuation and all choices
of roots of unity, but I know of no reason to expect these conjectures
to hold.

In forthcoming work, Iritani will prove our form of the Crepant
Resolution Conjecture for all crepant birational transformations
between toric Deligne--Mumford stacks.  His method uses the full force
of the mirror Landau--Ginzburg model, the variation of semi-infinite
Hodge structure
\citelist{\cite{Barannikov:periods}\cite{Iritani:integral}\cite{Iritani:inprogress}}
associated to it, and the mirror theorem for toric Deligne--Mumford
stacks \cite{CCIT:stacks}.  Since all of our examples are included in
his discussion, it is natural to ask: ``what is the point of this
paper?''  The discussion here has quite modest goals, and is meant to
illustrate four points.  Firstly, these questions are \emph{not
  difficult}.  If $\cX$ is a toric orbifold $\cX$ and $\cY \to X$ is a
crepant resolution then the relationship between the quantum
cohomology of $\cX$ and that of $\cY$ can be determined
systematically, using well-understood methods from toric mirror
symmetry.  Secondly, our form of the Crepant Resolution Conjecture may
also hold, without significant change, for \emph{more general crepant
  birational transformations}: we see this here for two crepant
partial resolutions and a flop.  Thirdly, the method of proof
described here also \emph{applies without change} to the more general
crepant toric situation.  Finally, it seems likely that \emph{no
  na\"\i ve modification} of Ruan's original conjecture holds true; we
discuss this further in the next paragraph.  Along the way, we will
see two things which were perhaps already obvious: that Givental-style
mirror theorems are well-adapted to the analysis of toric birational
transformations, and that the methods of \cite{CCIT:crepant1} are
applicable to the (local) Calabi--Yau examples which are of greatest
interest to physicists \cite{ABK}.

The original conjecture of Ruan has an attractive simplicity, and one
might therefore ask whether our formulation of the Crepant Resolution
Conjecture is unneccessarily complicated and whether some simpler
statement holds \cite{Cadman--Cavalieri}.  The examples below
constitute some evidence that the answer to these questions is ``no''.
In Example~II below we see that quantum corrections to Ruan's original
conjecture are probably necessary, and in Example~III the situation is
even worse: there is probably not even a generalization of the
Cohomological Crepant Resolution Conjecture to partial resolutions
which involves only small (rather than big) quantum cohomology.  This
is related to the absence of a Divisor Equation for degree-two classes
from the twisted sectors, and is discussed further in
Section~\ref{sec:C3Z5}.

\subsection*{A Note on the Bryan--Graber Conjecture}

Jim Bryan and Tom Graber have recently given a generalization of
Ruan's Crepant Resolution Conjecture which applies to big quantum
cohomology, rather than just small quantum cohomology, under the
assumption that the orbifold $\cX$ involved satisfies a Hard Lefschetz
condition on Chen--Ruan orbifold cohomology.  We will not consider
this here, as none of our examples satisfy the Hard Lefschetz
condition.  But as the conclusion of the Bryan--Graber Conjecture
implies the Ruan Conjecture, Example II can be thought of as further
evidence that the Bryan--Graber Conjecture probably fails to hold
without the Hard Lefschetz assumption: see \cite{CCIT:crepant1} for
more on this.

\subsection*{Conventions} We will assume that the reader is familiar
with the Gromov--Witten theory of orbifolds.  This theory is
constructed in
\citelist{\cite{Chen--Ruan:orbifolds}\cite{Chen--Ruan:GW}\cite{AGV:1}\cite{AGV:2}};
a rapid overview can be found in \cite{CCLT}*{Section~2}.  We work in
the algebraic category, so for us ``orbifold'' means ``smooth
algebraic Deligne--Mumford stack over $\CC$''.  All of our examples
are non-compact, but they carry the action of a torus $T=\Cstar$ such
that the $T$-fixed locus is compact.  We therefore work throughout
with $T$-equivariant Gromov--Witten invariants, which in this setting
behave much as the Gromov--Witten invariants of compact orbifolds (see
\emph{e.g.}  \cite{Bryan--Graber}), and with $T$-equivariant
Chen--Ruan orbifold cohomology.  We always take the product of
$T$-equivariant Chen--Ruan classes using the Chen--Ruan product; when
we want to emphasize this, we will write the product as $\CR$.  The
degree of a Chen--Ruan class always means its orbifold or age-shifted
degree.

An expository account of our Crepant Resolution Conjecture and its
consequences can be found in \cite{Coates--Ruan}.  The reader should
take care when comparing the discussion in this paper with those in
\citelist{\cite{Bryan--Graber}\cite{Coates--Ruan}}, as here we measure
the degrees of orbifold curves using a basis of degree-two cohomology
classes chosen as above, whereas there the authors use a so-called
\emph{positive basis} for $H_2$.  Our choice of degree conventions
fits well with toric geometry, and this will be important below, but
we pay a price for our choice: the presence of the rational numbers
$r_i$ described above.

\subsection*{Outline of the Paper} In Section~\ref{sec:statement} we
fix notation and give a precise description of the conjecture which we
will prove.  In Section~\ref{sec:theory} we collect various
preparatory results, as well as describing how our conjecture implies
versions of Ruan's Conjecture and the Cohomological Crepant Resolution
Conjecture.  Examples~I--IV are in
Sections~\ref{sec:C3Z3}--\ref{sec:toricflop} respectively.

\subsection*{Acknowledgements} The author thanks Hiroshi Iritani for
many extremely useful conversations, and Alessio Corti, Yongbin Ruan,
and Hsian-Hua Tseng for a productive collaboration.  He thanks Andrea
Brini and Alessandro Tanzini for enlightening discussions of the
example $\cX = \big[\CC^3/\ZZ_4\big]$, and the referee for comments
which significantly improved the paper.

\section{Statement of the Conjecture}
\label{sec:statement}
In this section we give a precise statement of the conjecture that we will
prove.  Before we do so, we describe our general setup and fix
notation.

\subsection*{General Setup} 

Let $\cX$ be a Gorenstein orbifold with projective coarse moduli space
$X$ and let $\pi:Y \to X$ be a crepant resolution.  Assume that $\cX$,
$X$, and $Y$ carry actions of a torus $T = \Cstar$ such that both
$\pi$ and the structure map $\cX \to X$ are $T$-equivariant and such
that the $T$-fixed loci on $\cX$ and $Y$ are compact.  Let
$\CC[\lambda]$ denote the $T$-equivariant cohomology of a point, where
$\lambda$ is the first Chern class of the line bundle $\cO(1) \to
\CCP^\infty$, and let $\CC(\lambda)$ be its field of fractions.  Write
$H(\cX):= H^\bullet_{\text{CR},T}(\cX;\CC) \otimes \CC(\lambda)$ for
the localized $T$-equivariant Chen--Ruan orbifold cohomology of $\cX$,
and $H(Y) := H^\bullet_T(Y;\CC)\otimes \CC(\lambda)$ for the localized
$T$-equivariant cohomology of $Y$.  We work throughout over the field
$\CC(\lambda)$. The $\CC(\lambda)$-vector spaces $H(\cX)$ and $H(Y)$
carry non-degenerate inner products, given by
\begin{align*}
  (\alpha,\beta)_\cX := 
  \int_{\cIX^T} {i^\star (\alpha \cup I^\star \beta) \over
    \be(N_{\cIX^T/\cIX})}
  && \text{and} &&
  (\alpha,\beta)_Y := 
  \int_{Y^T} {j^\star (\alpha \cup \beta) \over
    \be(N_{Y^T/Y})}
\end{align*}
where $I$ is the canonical involution on the inertia stack $\cIX$ of
$\cX$; $i:\cIX^T \to \cIX$ and $j:Y^T \to Y$ are the inclusions of the
$T$-fixed loci in $\cIX$ and $Y$ respectively; $N_{\cIX^T/\cIX}$ and
$N_{Y^T/Y}$ are the normal bundles to the $T$-fixed loci; and $\be$ is the
$T$-equivariant Euler class.  Note that the $T$-equivariant Euler
classes are invertible over $\CC(\lambda)$.

\subsection*{The Symplectic Vector Space}

In what follows write $\cZ$ for either $\cX$ or $Y$, and write $Z$ for
the coarse moduli space of $\cZ$ (\emph{i.e.} for either $X$ or $Y$).
Introduce the symplectic vector space
\begin{align*}
  &\cHZ := H(\cZ) \otimes \CC(\!(z^{-1})\!) & \text{the vector space} \\
  &\Omega_\cZ(f,g) := \Res_{z=0} \big(f(-z),g(z)\big)_\cZ \, dz &
  \text{the symplectic form}
\end{align*}
and set $\cHZ^+ := H(\cZ) \otimes \CC[z]$, $\cHZ^- := z^{-1} H(\cZ)
\otimes \CC[\![z^{-1}]\!]$.  The polarization $\cHZ = \cHZ^+ \oplus
\cHZ^-$ identifies $\cHZ$ with the cotangent bundle $T^\star \cHZ^+$.
We regard $\cHZ$ as a graded vector space where $\deg z = 2$.

\subsection*{Degrees and Novikov Variables}

Fix a basis $\omega_1,\ldots,\omega_s$ for $H^2(\cX;\QQ)$ consisting
of primitive integer vectors on the rays of the K\"ahler cone for
$\cX$, and a basis $\omega'_1,\ldots,\omega'_r$ for $H^2(Y;\QQ)$
consisting of primitive integer vectors on the rays of the K\"ahler
cone for $Y$.  Note that $H^2(\cX;\QQ)$ is canonically isomorphic to
$H^2(X;\QQ)$, so we can regard $\omega_1,\ldots,\omega_s$ as
cohomology classes on $X$, and in our situation we can always insist
that $\pi^\star \omega_i = r_i \omega'_i$, $1 \leq i \leq s$, for some
rational numbers $r_i$.  We measure the degrees of orbifold curves
using the bases $\omega_i$ and $\omega_i'$.  Recall that a stable map
$f:\cC \to \cZ$ from an orbifold curve to $\cZ$ has a well-defined
degree in the free part
\[ 
H_2(Z;\ZZ)_{\text{free}}:= H_2(Z;\ZZ)/H_2(Z;\ZZ)_{\text{tors}}
\]
of $H_2(Z;\ZZ)$; we write $\Eff(\cZ) \subset H_2(Z;\ZZ)_{\text{free}}$
for the set of degrees of stable maps from orbifold curves to $\cZ$.  Given
an element $d \in \Eff(\cZ)$, set $d_i = \langle d, \omega_i \rangle$
if $\cZ = \cX$ and $d_i = \langle d, \omega'_i \rangle$ if $\cZ = Y$.
Note that the $d_i$ here are in general rational numbers.  Define $Q^d
:= Q_1^{d_1} \cdots Q_s^{d_s}$ where $d \in \Eff(\cX)$ and $Q^{d'} :=
Q_1^{d_1'} \cdots Q_r^{d_r'}$ where $d' \in \Eff(Y)$.  Here
$Q_1,Q_2,\ldots$ are formal variables called Novikov variables; the
number of Novikov variables associated with $\cZ$ is $b_2(Z)$, the
second Betti number of $Z$.

\subsection*{Bases and Darboux Co-ordinates}

We fix $\CC(\lambda)$-bases $\phi_0,\ldots,\phi_N$ and
$\phi^0,\ldots,\phi^N$ for $H(\cX)$ such that
\begin{itemize}
\item[(a)] $\phi_0$ is the identity element $\fun_\cX \in 
H(\cX)$;
\item[(b)] $\phi_1,\phi_2,\ldots,\phi_s$ are lifts to
  $T$-equivariant cohomology of
  $\omega_1,\omega_2,\ldots,\omega_s$;
\item[(c)] $(\phi_i,\phi^j)_\cX ={\delta_i}^j$;
\end{itemize}
and $\CC(\lambda)$-bases $\varphi_0,\ldots,\varphi_N$ and
$\varphi^0,\ldots,\varphi^N$ for $H(Y)$ such that
\begin{itemize}
\item[(d)] $\varphi_0$ is the identity element $\fun_Y \in H(Y)$;
\item[(e)] $\varphi_1,\varphi_2,\ldots,\varphi_r$ are lifts to
  $T$-equivariant cohomology of
  $\omega'_1,\omega'_2,\ldots,\omega'_r$;
\item[(f)] $(\varphi_i,\varphi^j)_Y ={\delta_i}^j$.
\end{itemize}
Conditions (b) and (e) here will be useful below when we discuss the
Divisor Equation.  Write
\begin{align*}
  \Phi_i = 
  \begin{cases}
    \phi_i & \text{if $\cZ = \cX$} \\
    \varphi_i & \text{if $\cZ = Y$} 
  \end{cases}
  && \text{and} &&
  \Phi^i = 
  \begin{cases}
    \phi^i & \text{if $\cZ = \cX$} \\
    \varphi^i & \text{if $\cZ = Y$.} 
  \end{cases}
\end{align*}
Then
\begin{equation}
  \label{eq:Darboux}
  \sum_{k \geq 0} q^\alpha_k \Phi_\alpha z^k + \sum_{l \geq 0}
  p_{\beta,l} \Phi^\beta (-z)^{-1-l}
\end{equation}
gives a Darboux co-ordinate system $\{q_{\alpha,k},p_{\beta,l} \}$ on
$\cHZ$; here and henceforth we use the summation convention on Greek
indices, summing repeated Greek (but not Roman) indices over the range
$0,1,\ldots,N$.

\subsection*{Gromov--Witten Invariants}

We use correlator notation for $T$-equivariant Gromov--Witten
invariants of $\cZ$, writing
\begin{equation}
  \label{eq:GW}
  \correlator{\alpha_1 \psi^{i_1},\ldots,\alpha_n \psi^{i_n}}^\cZ_{0,n,d} = 
  \int_{[\cZ_{0,n,d}]^{\text{vir}}} \prod_{k=1}^n
  \ev_k^\star(\alpha_k) \cdot \psi_k^{i_k}
\end{equation}
where $\alpha_1,\ldots,\alpha_n$ are elements of $H(\cZ)$ and
$i_1,\ldots,i_n$ are non-negative integers.  The cohomology classes
$\psi_1,\ldots,\psi_n$ here are the first Chern classes of the
universal cotangent line bundles on the moduli space $\cZ_{0,n,d}$ of
genus-zero $n$-pointed stable maps to $\cZ$ of degree $d \in
\Eff(\cZ)$.  The integral denotes the cap product with the
$T$-equivariant virtual fundamental class of $\cZ_{0,n,d}$: we discuss
this further in the next paragraph.  The right-hand side of equation
\eqref{eq:GW} is defined in \S8.3 of \cite{AGV:2} where it is denoted
$\correlator{\tau_{i_1} (\alpha_1),\ldots,\tau_{i_n}
  (\alpha_n)}_{0,d}$; our choice of notation allows compact
expressions for many important quantities, such as
\begin{align*}
  \correlator{{\alpha \over
      z-\psi}}^\cZ_{0,1,d} && \text{for} &&
  \sum_{m \geq 0} {1 \over z^{m+1}}
  \correlator{\alpha \psi^m \vphantom{\big\vert}}^\cZ_{0,1,d},
\end{align*}
as correlators are multilinear in their entries.

\subsection*{Twisted Gromov--Witten Invariants}

In most of the examples we consider below, $\cZ$ will be the total
space of a concave vector bundle $\cE$ over a compact orbifold (or
manifold) $\cB$, and the $T$-action on $\cZ$ will rotate the fibers of
$\cE$ and cover the trivial action on $\cB$.  That $\cE$ is concave
means that $H^0(\cC,f^\star \cE) = 0$ for all stable maps $f:\cC \to
\cB$ of non-zero degree.  This implies that stable maps to $\cE$ of
non-zero degree all land in the zero section and so, for $d \ne 0$,
the moduli space $\cZ_{0,n,d}$ coincides as a scheme with
$\cB_{0,n,d}$.  The natural obstruction theories on $\cZ_{0,n,d}$ and
$\cB_{0,n,d}$ differ, though, and the $T$-equivariant virtual
fundamental classes satisfy
\[
[\cZ_{0,n,d}]^{\text{vir}} = 
[\cB_{0,n,d}]^{\text{vir}} \cap \be(\text{Obs}_{0,n,d})
\]
where $\be$ is the $T$-equivariant Euler class and
$\text{Obs}_{0,n,d}$ is the vector bundle over $\cB_{0,n,d}$ with
fiber at a stable map $f:\cC \to \cB$ equal to $H^1(\cC,f^\star \cE)$.
Thus
\[
\int_{[\cZ_{0,n,d}]^{\text{vir}}} ( \cdots ) = 
\int_{[\cB_{0,n,d}]^{\text{vir}}} ( \cdots ) \cup \be(\text{Obs}_{0,n,d}).
\]
This means that Gromov--Witten invariants of $\cZ$ coincide with
\emph{twisted Gromov--Witten invariants} \cite{Coates--Givental:QRRLS,
  CCIT:computing} of $\cB$ where the twisting characteristic class is
the inverse $T$-equivariant Euler class $\be^{-1}$ and the twisting
bundle is $\cE$: this is explained in detail in \cite{CCIT:computing}.
Results of \cite{CCIT:computing} allow us to compute these twisted
Gromov--Witten invariants in terms of the ordinary Gromov--Witten
invariants of $\cB$, a fact which we exploit repeatedly below.

In the exceptional case $d=0$, the moduli space $\cZ_{0,n,d}$ is
non-compact and so we need to say what we mean by the integral in
\eqref{eq:GW}.  Since $\cZ_{0,n,d}$ carries a $T$-action with compact
fixed set, we can define the integral using the virtual localization
formula of Graber--Pandharipande \cite{Graber--Pandharipande}; note
that we could do this in the case $d \ne 0$, too, and this would
reproduce the definition which we just gave.

\subsection*{Gromov--Witten Potentials}

The genus-zero Gromov--Witten potential $F^0_\cZ$ is a generating
function for certain genus-zero Gromov--Witten invariants of $\cZ$.
It is a formal power series in variables $\tau^a$, $0 \leq a \leq N$,
and the Novikov variables $Q_i$, $1 \leq i \leq b_2(Z)$, defined by
\begin{equation}
  \label{eq:GWpotentialwithQ}
  F^0_\cZ = \sum_{n \geq 0} 
  \sum_{d \in \Eff(\cZ)} {Q^d \over n!}
  \smcorrelator{\overbrace{\tau,\tau,\ldots,\tau}^{\text{$n$
        times}}}^\cZ_{0,n,d}, 
\end{equation}
where $\tau = \tau^\alpha \Phi_\alpha$.  Since correlators are
multilinear, the expression
$\correlator{\tau,\tau,\ldots,\tau}^\cZ_{0,n,d}$ expands into a
polynomial in the variables $\tau^a$.  The second summation here 
is over the set $\Eff(\cZ)$ of degrees of maps from orbifold curves to
$\cZ$.

The genus-zero descendant potential $\cF^0_\cZ$ is a generating
function for all genus-zero Gromov--Witten invariants of $\cZ$.  It is
a formal power series in variables $t^a_k$, $0 \leq a \leq N$, $0
\leq k < \infty$, and the Novikov variables $Q_i$, $1 \leq i \leq
b_2(Z)$, defined by
\begin{equation}
  \label{eq:descendantpotentialwithQ}
  \cF^0_\cZ = \sum_{n \geq 0} \sum_{0 \leq k_1,\ldots,k_n < \infty}
  \sum_{d \in \Eff(\cZ)} {Q^d \over n!}
  \correlator{t_{k_1} \psi^{k_1},\ldots,t_{k_n} \psi^{k_n}}^\cZ_{0,n,d}
\end{equation}
where $t_k = t_k^\alpha \Phi_\alpha$.  The expression
$\correlator{t_{k_1} \psi^{k_1},\ldots,t_{k_n}
  \psi^{k_n}}^\cZ_{0,n,d}$ here expands, by multilinearity again,
into a polynomial in the variables $t^a_k$.

\subsection*{Analytic Continuation} Let us call the coefficient in
$F^0_\cZ$ of any monomial $\tau^{a_1}\cdots \tau^{a_n}$ a
\emph{coefficient series of $F^0_\cZ$}, and call the coefficient in
$\cF^0_\cZ$ of any monomial $t^{a_1}_{k_1} \cdots t^{a_n}_{k_n}$ a
\emph{coefficient series of $\cF^0_\cZ$}.  Each coefficient series is
a formal power series in the Novikov variables $Q_i$, $1 \leq i \leq
b_2(Z)$.  All of the examples we consider below satisfy:
\begin{itemize}
\item[(A)] each coefficient series of $F^0_\cZ$ converges in a
  neighbourhood of $Q_1 = Q_2 = \cdots = 0$ to an analytic function of
  the $Q_i$; and
\item[(B)] the coefficient series of $F^0_\cZ$ admit simultaneous
  analytic continuation to a neighbourhood of $Q_1 = Q_2 = \cdots =
  1$.
\end{itemize}
Condition (A) implies that each coefficient series of $\cF^0_\cZ$
converges in a neighbourhood of $Q_1 = Q_2 = \cdots = 0$ to an
analytic function of the $Q_i$, and condition (B) implies that
the coefficient series of $\cF^0_\cZ$ also admit simultaneous analytic
continuation to a neighbourhood of $Q_1 = Q_2 = \ldots = 1$: see
\cite{Coates--Ruan}*{Appendix} for discussion of a closely-related
point.

In what follows we will assume that a simultaneous analytic
continuation of the coefficient series has been chosen, and will set
$Q_1 = Q_2 = \ldots = 1$ throughout. Thus we regard the genus-zero
Gromov--Witten potential as a formal power series 
\begin{equation}
  \label{eq:GWpotentialwithoutQ}
  F^0_\cZ = \sum_{n \geq 0} 
  \sum_{d \in \Eff(\cZ)} {1 \over n!}
  \smcorrelator{\overbrace{\tau,\tau,\ldots,\tau}^{\text{$n$
        times}}}^\cZ_{0,n,d}, 
\end{equation}
in the variables $\tau^a$, $0 \leq a \leq N$, and we regard the
genus-zero descendant potential as a formal power series
\begin{equation}
  \label{eq:descendantpotentialwithoutQ}
  \cF^0_\cZ = \sum_{n \geq 0} \sum_{0 \leq k_1,\ldots,k_n < \infty}
  \sum_{d \in \Eff(\cZ)} {1 \over n!}
  \correlator{t_{k_1} \psi^{k_1},\ldots,t_{k_n} \psi^{k_n}}^\cZ_{0,n,d}
\end{equation}
in the variables $t^a_k$, $0 \leq a \leq N$, $0 \leq k < \infty$.

\subsection*{The Divisor Equation} 

The reader might worry that by suppressing Novikov variables ---
\emph{i.e.} by setting $Q_1 = Q_2 = \cdots = 1$ --- we have lost some
information about the degrees of curves.  This is not the case.  We
will discuss this for the case $\cZ = Y$; the case $\cZ = \cX$ is
entirely analogous.  Recall that our basis
$\varphi_0,\ldots,\varphi_N$ for $H(Y)$ was chosen so that
$\varphi_1,\ldots,\varphi_r$ is a lift to $T$-equivariant cohomology
of the basis $\omega_1',\ldots,\omega_r'$ for $H^2(Y;\CC)$ with which
we measure the degrees of curves.  Then, writing
\begin{align*}
  \tau = \tau^\alpha \varphi_\alpha, &&
  \tau_{\text{rest}} = \tau^0 \varphi_0 + \tau^{r+1} \varphi_{r+1} +
  \tau^{r+2} \varphi_{r+2} + \cdots + \tau^N \varphi_N,
\end{align*}
the Divisor Equation \cite{AGV:2}*{Theorem~8.3.1} gives
\[
F^0_Y = {1 \over 6} \big(\tau \cup \tau,\tau\big)_Y + 
\sum_{n \geq 0} 
\sum_{\substack{d \in \Eff(\cZ): \\ d \neq 0}} {e^{d_1 \tau^1} \cdots e^{d_r \tau^r} \over n!}
  \smcorrelator{\overbrace{\tau_{\text{rest}},\tau_{\text{rest}},\ldots,\tau_{\text{rest}}}^{\text{$n$
        times}}}^\cZ_{0,n,d}
\]
and so the substitution 
\[
e^{\tau^i} \longmapsto Q_i e^{\tau^i}, \qquad 1 \leq i \leq r,
\]
turns \eqref{eq:GWpotentialwithoutQ} into \eqref{eq:GWpotentialwithQ}.
The story for the descendant potential $\cF^0_Y$ is a little more
complicated but the upshot is the same: the Divisor Equation allows us
to recover \eqref{eq:descendantpotentialwithQ} from
\eqref{eq:descendantpotentialwithoutQ}.

\subsection*{The Lagrangian Submanifold-Germ}

Following Givental
\cite{Givental:quantization,Givental:symplectic,Coates--Givental:QRRLS}
we encode all genus-zero Gromov--Witten invariants of $\cZ$ via the
formal germ of a Lagrangian submanifold of $\cHZ$, defined as
follows.  Regard the genus-zero descendant potential $\cF^0_\cZ$ as the
formal germ of a function on $\cHZ^+$ via the change of variables
\[
q^a_k =
\begin{cases}
  t^0_1 - 1 & \text{if $(a,k) = (0,1)$} \\
  t^a_k & \text{otherwise}.
\end{cases}
\]
This change of variables is called the dilaton shift.  The variables
$q^a_k$ here are the Darboux co-ordinates from \eqref{eq:Darboux}, so
a general point on $\cHZ^+$ is $\sum_{k \geq 0} q_k^\alpha \Phi_\alpha
z^k$.  The dilaton shift makes $\cF^0_\cZ$ into the formal germ at
$-z$ of a function on $\cHZ^+$.  The graph of the differential of
$\cF^0_\cZ$ therefore defines the formal germ of a submanifold of
$\cHZ \cong T^\star \cHZ^+$, defined by the equations
\begin{align}
  \label{eq:coneequations}
  p^a_k = {\partial \cF^0_\cZ \over \partial q^a_k}
  && 0 \leq a \leq N, \; 0 \leq k < \infty.
\end{align}
We denote this Lagrangian submanifold-germ by $\cLZ$.

\subsection*{More Analytic Continuation}

In what follows we will need to analytically continue the
submanifold-germ $\cLZ$.  There is nothing exotic about this, as we
now explain.  The germ $\cLZ$ is defined by the equations
\eqref{eq:coneequations}, and to analytically continue $\cLZ$ we will
analytically continue each partial derivative ${\partial\cF^0_\cZ
  \over \partial q^a_k}$ in the variables\footnote{These variables
  correspond to basis elements of $H(\cZ)$ of degree $0$ or $2$.}
$t^a_0$, $0 \leq a \leq b_2(Z)$.  The partial derivative
${\partial\cF^0_\cZ \over \partial q^a_k}$ is a formal power series in
the variables $t^b_l$, $0 \leq b \leq N$, $0 \leq l < \infty$, so we
write it in the form
\begin{align*}
  \sum_I f_I\, t^I
  && \parbox{0.7\textwidth}{$t^I$ a monomial in the variables $t^b_l$
    with $b>b_2(Z)$ or $l>0$, \\
    $f_I$ a formal power series in the variables $t^a_0$, $0 \leq a
    \leq b_2(Z)$,}
\end{align*}
and then analytically continue each $f_I$.

\subsection*{The Crepant Resolution Conjecture} We are now ready to
state the conjecture.

\begin{conj} \label{CRC}
  There is a degree-preserving $\CC(\!(z^{-1})\!)$-linear
  symplectic isomorphism $\U:\cHX \to \cHY$ and a choice of analytic
  continuations of $\cLX$ and $\cLY$ such that $\U\(\cLX\) = \cLY$.
  Furthermore, $\U$ satisfies:
  \begin{itemize}
  \item[(a)] $\U(\fun_\cX) = \fun_Y + O(z^{-1})$;
  \item[(b)] $\U \circ \(\rho \CR\) = \({\pi^\star \rho} \, \cup\)
    \circ \U$ for every untwisted degree-two class $\rho \in
    H^2(\cX;\CC)$;
  \item[(c)] $\U\(\cHX^+ \)\oplus \cHY^- = \cHY$.
  \end{itemize}
\end{conj}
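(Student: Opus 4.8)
The plan is to use mirror symmetry for toric orbifolds to construct $\U$ explicitly, and then to deduce the equality $\U(\cLX)=\cLY$ of germs from Givental's characterization of the Lagrangian cone \cite{Givental:symplectic}, reading off the normalizations (a)--(c) along the way. The first input is a pair of hypergeometric $I$-functions. In each of the four examples $\cX$ and $Y$ are toric --- or, via the concavity/twisting discussion above, are twisted Gromov--Witten theories of toric base orbifolds --- so the mirror theorem for toric Deligne--Mumford stacks \cite{CCIT:stacks} together with the quantum Riemann--Roch/twisting results \cite{CCIT:computing} produces explicit cohomology-valued hypergeometric series $I_\cX(x,z)$ and $I_Y(y,z)$ with $I_\cX(x,-z)\in\cLX$ for all small $x$ and $I_Y(y,-z)\in\cLY$ for all small $y$. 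Both $I$-functions solve the \emph{same} system of GKZ/Picard--Fuchs differential equations --- the quantum differential equation of the common mirror Landau--Ginzburg model --- because $\cX$ and $Y$ are birational models attached to one set of toric combinatorial data and hence correspond to two large-radius limit points (maximal cones of the secondary fan) in a single stringy K\"ahler moduli space $\cM$. In these examples $\cM$ is essentially one-dimensional, so this is an ODE and the continuation geometry is simple.

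Next I would construct $\U$ by analytic continuation. Continue $I_\cX$ along a path in $\cM$ from a neighbourhood of $\lrl_\cX$ to a neighbourhood of $\lrl_Y$. Since $I_\cX$ and $I_Y$ each span the solution space of the same linear system, the continuation of $I_\cX$ is a $\CC(\!(z^{-1})\!)$-linear combination of the components of $I_Y$, and the resulting connection matrix defines the candidate $\U:\cHX\to\cHY$. Concretely I would represent the hypergeometric entries of the $I$-functions as Mellin--Barnes integrals and compute the continuation by shifting contours and summing residues, obtaining $\U$ as an explicit matrix built from Gamma-function values and roots of unity. One then reads off from this formula that $\U$ is $\CC(\!(z^{-1})\!)$-linear and degree-preserving for the grading with $\deg z=2$ (and age-shifted degrees on $H(\cX)$, $H(Y)$); that it is symplectic, via a residue computation using the compatibility of the Poincar\'e pairings of $\cX$ and $Y$ under $\pi$ --- equivalently a Gamma-function reflection identity; that it satisfies the transversality condition (c), which is a leading-order ``Birkhoff-factorizability'' statement about the explicit $\U$; and that it intertwines the small-quantum divisor action $\rho\,\CR$ on $\cX$ with $\pi^\star(-)\cup$ on $Y$, which is (b) and is exactly the content of the relation $j(\phi_i)=r_i\varphi_i$ between the two systems of K\"ahler parameters.

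It then remains to pass from $I$-functions to cones. Givental's cone $\cLZ$ is an overruled Lagrangian cone: at each of its points $f$ the tangent space $T_f\cLZ$ is tangent to $\cLZ$ exactly along $z\,T_f\cLZ\subset\cLZ$, so $\cLZ$ is ruled by the family of subspaces $z\,T_f\cLZ$ and is therefore reconstructed from any sub-family of its points --- e.g.\ the $J$-function, or the $I$-function together with the Divisor Equation \cite{AGV:2} to restore the missing directions --- whose tangent spaces sweep it out. A linear symplectic isomorphism carries overruled cones to overruled cones and tangent spaces to tangent spaces, so $\U(\cLX)$ is the overruled cone ruled by the tangent spaces along the family $\U(I_\cX(x,-z))$; by the previous step this family, after continuation, is a reparametrization of a ruling family for $\cLY$, whence $\U(\cLX)=\cLY$. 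Running the same argument for $\U^{-1}$ along the reverse path gives the opposite inclusion and removes any concern about the domains of the germs. Finally (a): degree-preservation forces $\U(\fun_\cX)=c\,\fun_Y+O(z^{-1})$, and comparing the leading ($z^1$) term of $\U(I_\cX)$ with that of the ruling family for $\cLY$ --- both proportional to $z\,\fun$ --- fixes $c=1$.

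The main obstacle is the middle step: carrying out the analytic continuation of the hypergeometric $I$-functions explicitly and, above all, verifying that the connection matrix $\U$ is genuinely symplectic and satisfies (c). These properties carry the real geometric content --- the first ties the two Poincar\'e pairings together, the second is what lets $\U$ descend to a statement about quantum cohomology --- and in the partial-resolution (Example III) and flop (Example IV) cases one must additionally check that $\U(I_\cX)$ still generates all of $\cLY$, precisely where the na\"\i ve form of Ruan's conjecture is expected to fail. Given the toric mirror theorems and Givental's cone formalism, the first and last steps are essentially formal, much as in \cite{CCIT:crepant1}.
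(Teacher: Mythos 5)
Your proposal reproduces the paper's strategy almost exactly: pass to hypergeometric $I$-functions via the toric/twisted mirror theorems, observe that they satisfy Picard--Fuchs systems on adjacent coordinate patches of the $B$-model moduli space, analytically continue via Mellin--Barnes to obtain $\U$ as the connection matrix, and then deduce $\U(\cLX)=\cLY$ from the fact that each $I$-function determines its cone. The one place where you put a gloss that the paper does not is the verification that $\U$ is symplectic and satisfies (a)--(c): you suggest this follows from a Gamma-reflection/Poincar\'e-pairing argument, whereas the paper explicitly remarks that these properties are \emph{not} a priori obvious from the Mellin--Barnes construction and simply checks them by hand from the explicit matrix; and the step ``the $I$-function determines the cone'' is not a consequence of overruled-cone generalities alone but rests on the specific reconstruction results (Proposition~\ref{thm:stuff}(b),(c), and Iritani's reconstruction theorem in Example~II, which is also where the $B$-model moduli space is two-dimensional rather than one-dimensional as you assert). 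These are small imprecisions of emphasis rather than gaps; the architecture of the argument is the same.
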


This is a slight modification of a conjecture due to
Coates, Corti, Iritani, and Tseng \cite{CCIT:crepant1}; very similar ideas
occurred, simultaneously and independently, in unpublished work of
Ruan.  An expository account of the conjecture and its consequences
can be found in \cite{Coates--Ruan}.  

\section{General Theory}
\label{sec:theory}
In this section we describe various aspects of Givental's symplectic
formalism which we will need below, as well as stating some
consequences of Conjecture~\ref{CRC}.

\subsection*{Big and Small $J$-Functions}

Let $\tau = \tau^\alpha \Phi_\alpha$.  The \emph{big $J$-function} of $\cZ$
is 
\[
\JJ_\cZ(\tau,z) := z + \tau + 
\sum_{n \geq 0} \sum_{d \in \Eff(\cZ)} 
{1 \over n!} 
\correlator{\tau,\tau,\ldots,\tau,{\Phi_\epsilon \over z -
    \psi}}^\cZ_{0,n+1,d} \Phi^\epsilon.
\]
It is a formal family of elements of $\cHZ$ --- in other words,
$\JJ_\cZ$ is a formal power series in the variables $\tau^a$, $0 \leq
a \leq N$, which takes values in $\cHZ$.  By writing out the equations
\eqref{eq:coneequations} defining $\cLZ$, it is easy to see that
$J_\cZ(\tau,-z)$ is the unique family of elements of $\cLZ$ of the
form $-z + \tau + O(z^{-1})$.

Take $\cZ = Y$ and restrict the parameter $\tau$ in the big
$J$-function to the locus $\tau = \tau^1 \varphi_1 + \cdots + \tau^r
\varphi_r$.  Then the Divisor Equation gives that $\JJ_Y(\tau^1 \varphi_1 +
\cdots + \tau^r \varphi_r,z)$ is equal to
\begin{equation}
  \label{eq:JYdivisor}
  z \, e^{\tau^1 \varphi_1/z} \cdots e^{\tau^r \varphi_r /z}
  \Bigg( 1 + \sum_{d \in \Eff(Y)} e^{d_1 \tau^1} \cdots e^{d_r \tau^r} 
  \correlator{\varphi_\epsilon \over z(z - \psi)}^Y_{0,1,d} \varphi^\epsilon \Bigg).
\end{equation}
Making the change of variables $q_i = e^{\tau^i}$, $1 \leq i \leq r$,
we define the \emph{small $J$-function} of $Y$ to be 
\begin{equation}
  \label{eq:JYsmall}
  J_Y(q,z) := z \, q_1^{\varphi_1/z} \cdots q_r^{\varphi_r /z}
  \Bigg( 1 + \sum_{d \in \Eff(Y)} q_1^{d_1} \cdots q_r^{d_r} 
  \correlator{\varphi_\epsilon \over z(z - \psi)}^Y_{0,1,d} \varphi^\epsilon \Bigg).
\end{equation}
In examples below we will see that this converges, in a domain where
each $|q_i|$ is sufficiently small, to a multi-valued analytic
function of $q_1,\ldots,q_r$ which takes values in $\cHY$.  The
multi-valuedness comes from the factors $q_i^{\varphi_i/z} := \exp(\varphi_i
\log(q_i)/z)$.  We have $J_Y(q,-z) \in \cLY$ for all $q$ in the domain
of convergence of $J_Y$.

Similarly, take $\cZ = \cX$ and restrict the parameter $\tau$ in the big
$J$-function to the locus $\tau = \tau^1 \phi_1 + \cdots + \tau^s
\phi_s$.  Then the Divisor Equation gives that 
\[
\JJ_\cX(\tau^1 \phi_1 +
\cdots + \tau^s \phi_s,z) = 
z \, e^{\tau^1 \phi_1/z} \cdots e^{\tau^s \phi_s /z}
\Bigg( 1 + \sum_{d \in \Eff(\cX)} e^{d_1 \tau^1} \cdots
e^{d_s \tau^s}  
\correlator{\phi_\epsilon \over z(z - \psi)}^\cX_{0,1,d} \phi^\epsilon \Bigg).
\]
Making the change of variables $u_i = e^{\tau^i}$, $1 \leq i \leq s$,
we define the \emph{small $J$-function} of $\cX$ to be
\begin{equation}
  \label{eq:JXsmall}
  J_\cX(u,z) := \\ z \, u_1^{\phi_1/z} \cdots u_s^{\phi_s /z}
  \Bigg( 1 + \sum_{d \in \Eff(\cX)} u_1^{d_1} \cdots u_s^{d_s} 
  \correlator{\phi_\epsilon \over z(z - \psi)}^\cX_{0,1,d} \phi^\epsilon \Bigg).
\end{equation}
In the examples below this converges, in a domain where each $|u_i|$
is sufficiently small, to a multi-valued analytic function of
$u_1,\ldots,u_s$ which takes values in $\cHX$.  We have $J_\cX(u,-z)
\in \cLX$ for all $u$ in the domain of convergence of $J_\cX$.

\subsection*{Two Consequences of Conjecture~\ref{CRC}}

Recall that the $T$-equivariant small quantum cohomology of $\cX$ is a
family of algebra structures on $H(\cX)$ parametrized by
$u_1,\ldots,u_s$, defined by
\begin{equation}
  \label{eq:smallQCX}
  \phi_\alpha \bullet \phi_\beta = 
  \sum_{d \in \Eff(\cX)} u_1^{d_1} \cdots u_s^{d_s} 
  \correlator{\phi_\alpha, \phi_\beta, \phi^\epsilon}^\cX_{0,3,d}
  \phi_\epsilon.
\end{equation}
The $T$-equivariant small quantum cohomology of $Y$ is a family of
algebra structures on $H(Y)$ parametrized by $q_1,\ldots,q_r$,
defined by
\begin{equation}
  \label{eq:smallQCY}
  \varphi_\alpha \bullet \varphi_\beta = 
  \sum_{d \in \Eff(Y)} q_1^{d_1} \cdots q_r^{d_r} 
  \correlator{\varphi_\alpha,\varphi_\beta,\varphi^\epsilon}^Y_{0,3,d} 
  \varphi_\epsilon.
\end{equation}
For the remainder of this subsection, assume that:
\begin{itemize}
\item Conjecture~\ref{CRC} holds;
\item the symplectic transformation $\U$ remains well-defined in the
  non-equivariant limit $\lambda \to 0$;
\item $\cX$ is semi-positive\footnote{The orbifold $\cZ$ is
      semi-positive if and only if there does not exist $d \in
      \Eff(\cZ)$ such that 
      \[
      3 - \dim_\CC \cZ \leq \langle c_1(T\cZ), d\rangle < 0
      \]
      All Fano and Calabi--Yau orbifolds are semi-positive, as are all
      orbifold curves, surfaces, and $3$-folds.  In particular, all
      the orbifolds that we consider in the examples below are
      semi-positive.}.
\end{itemize}
Two consequences of Conjecture~\ref{CRC} are then as follows: these
are proved\footnote{This is not, strictly speaking, true: the
  $T$-equivariant version of the Crepant Resolution Conjecture is not
  treated in \cite{Coates--Ruan}.  It is straightforward to check,
  however, that the arguments given there also prove the results
  stated here.  The key point is that $\U$ has a non-equivariant
  limit, and so only non-negative powers of $\lambda$ can occur.}  in
\cite{Coates--Ruan}.  Define the class $c \in H(Y)$ by
\[
\U(\fun_\cX) = \fun_Y - c z^{-1} + O(z^{-2}),
\]
and write 
\begin{align} \label{eq:defofci}
c = c^1 \varphi_1
+ \cdots + c^r \varphi_r + d \lambda, &&
c^1,\ldots,c^r,d \in \CC;
\end{align}
such an equality exists because $c$ has degree $2$.
Then:

\begin{cor} \label{cor:CCRC}
  The algebra obtained from the small quantum cohomology algebra of
  $Y$ by analytic continuation\footnote{\label{acfootnote}The analytic
    continuation of the small quantum product here is induced by the
    analytic continuation of $\cLY$.  This is explained in
    \cite{Coates--Ruan}.} in the parameters $q_{s+1},\ldots,q_r$ (if
  necessary) followed by the substitution
  \[
  q_i =
  \begin{cases}
    0 & 1 \leq i \leq s \\
    e^{c^i} & s < i \leq r
  \end{cases}
  \]
  is isomorphic to the Chen--Ruan orbifold cohomology algebra of
  $\cX$, via an isomorphism which sends $\alpha \in H^2(\cX;\CC)
  \subset H(\cX)$ to $\pi^\star \alpha \in H(Y)$.
\end{cor}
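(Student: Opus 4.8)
The plan is to deduce Corollary~\ref{cor:CCRC} from Conjecture~\ref{CRC} by extracting information from the equality $\U(\cLX) = \cLY$ at the ``small'' locus, reading off the $z^{-1}$-coefficient of the transformed $J$-functions, and identifying the resulting structure constants with those of the two quantum products \eqref{eq:smallQCX} and \eqref{eq:smallQCY}. The starting point is the observation recorded above that $J_\cX(u,-z)$ lies on $\cLX$, and hence $\U\big(J_\cX(u,-z)\big)$ lies on $\cLY$ (after analytic continuation). Because $\U$ is $\CC(\!(z^{-1})\!)$-linear, degree-preserving, and satisfies (a), one computes from \eqref{eq:JXsmall} that $\U\big(J_\cX(u,-z)\big)$ has the form $-z + (\text{a degree-two class}) + O(z^{-1})$; by the uniqueness statement for points of $\cLY$ of this shape, it must coincide with $\JJ_Y(\btau(u),-z)$ for a uniquely determined family $\btau(u)$ of degree-zero-and-two classes on $Y$. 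The leading terms give the mirror map: using property (b) to track the $u_i^{\phi_i/z}$ factors against the $q_i^{\varphi_i/z}$ factors, and the definition of $c$ in \eqref{eq:defofci}, one finds $\btau(u) = c^1\varphi_1 + \cdots + c^r\varphi_r + (\text{terms involving }u_1,\ldots,u_s)$, so that the limit $u_1,\ldots,u_s \to 0$ sends $q_i = e^{\tau^i} \mapsto 0$ for $i \leq s$ and $q_i \mapsto e^{c^i}$ for $i > s$; the $\lambda$-term $d\lambda$ is harmless in the non-equivariant limit.

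**Next I would** promote this statement about $J$-functions to a statement about quantum products. The key tool is the standard fact in Givental's formalism that the small quantum product is recovered as the second-order part of $J$: differentiating $J_\cZ$ twice and examining the $z^{-1}$-coefficient reproduces the structure constants $\correlator{\Phi_\alpha,\Phi_\beta,\Phi^\epsilon}^\cZ_{0,3,d}$. More precisely, $\cLZ$ is a cone swept out by its tangent spaces, and the tangent space to $\cLZ$ at $J_\cZ(\btau,-z)$ is (up to the $z$-grading) the module $H(\cZ)$ with module structure given by the big quantum product $\bullet_\btau$; restricting $\btau$ to the small locus recovers small quantum cohomology. Applying $\U$, which is symplectic and $\CC(\!(z^{-1})\!)$-linear, it intertwines the tangent space of $\cLX$ at $J_\cX(u,-z)$ with the tangent space of $\cLY$ at $\U(J_\cX(u,-z)) = \JJ_Y(\btau(u),-z)$, hence intertwines the $u$-parametrized quantum product on $H(\cX)$ with the $\btau(u)$-parametrized quantum product on $H(Y)$. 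Taking $u \to 0$ makes the left-hand product the Chen--Ruan cup product $\CR$ (the $d = 0$ term in \eqref{eq:smallQCX}), and makes the right-hand product the specialization of the analytically-continued small quantum product of $Y$ at $q_i = 0$ ($i \leq s$), $q_i = e^{c^i}$ ($i > s$). That $\U$ is then an honest \emph{algebra} isomorphism in the limit follows because $\U(\fun_\cX) = \fun_Y$ in the non-equivariant limit (from (a), since the class $c$ contributes only at order $z^{-1}$) and $\U$ is linear; and property (b), in the limit $\lambda \to 0$ and evaluated on $\fun_\cX$, forces $\U(\rho) = \pi^\star\rho$ for $\rho \in H^2(\cX;\CC)$, which is the asserted normalization of the isomorphism on degree-two classes.

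**The main obstacle** I expect is bookkeeping around the three places where analytic continuation and the non-equivariant limit interact: one must check that the chosen analytic continuation of $\cLY$ restricts to a well-defined analytic continuation of the small quantum product of $Y$ in $q_{s+1},\ldots,q_r$ (this is the content of footnote~\ref{acfootnote} and is handled in \cite{Coates--Ruan}), that the limit $q_i \to 0$ for $i \leq s$ is a genuine boundary limit of the convergence domain rather than a formal substitution, and that $d\lambda$ genuinely drops out — i.e., that the whole construction survives $\lambda \to 0$, which is exactly the extra hypothesis ``$\U$ remains well-defined in the non-equivariant limit'' together with semi-positivity of $\cX$ (needed so that no negative powers of $z$ are generated and the $J$-function comparison is not spoiled by unstable contributions). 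Apart from this, the argument is a routine unwinding of definitions once the $J$-function identity $\U(J_\cX(u,-z)) = \JJ_Y(\btau(u),-z)$ and the mirror map $\btau(u)$ are in hand; since Corollary~\ref{cor:CCRC} is explicitly stated to be proved in \cite{Coates--Ruan} modulo the equivariant refinement noted in the footnote, I would in the write-up simply cite that reference and indicate the three checks above as the modifications needed for the $T$-equivariant statement.
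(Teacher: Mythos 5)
Your proposal takes essentially the same route as the paper: the paper's proof of Corollary~\ref{cor:CCRC} consists entirely of the citation to \cite{Coates--Ruan} together with the footnote (which you reproduce) that the $T$-equivariant statement follows because $\U$ admits a non-equivariant limit and hence only non-negative powers of $\lambda$ occur, and your Givental-formalism sketch is a correct reconstruction of the argument that reference supplies. One small imprecision worth flagging: property~(b) gives $\U(\rho) = \pi^\star\rho + O(z^{-1})$ rather than $\U(\rho) = \pi^\star\rho$ on the nose, and it is the induced map on tangent spaces to $\cLX$ and $\cLY$ (equivalently the ``leading part'' of $\U$), not $\U$ itself, that sends $\rho$ to $\pi^\star\rho$ and furnishes the algebra isomorphism asserted in the corollary.
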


\noindent This is a version of Ruan's Cohomological Crepant Resolution
Conjecture \cite{Ruan:CCRC}.  

Define elements $b_e \in H(Y)$, $0 \leq
e\leq N$, by $b_e = 0$ if $\deg \phi_e \leq 2$ and 
\begin{align*}
  \U\big(\phi_e z^{1 - {1 \over 2} \deg \phi_e}\big) =
  b_e + O(z^{-1})
\end{align*}
otherwise.  Define power series $f^1,\ldots,f^r,g \in \CC[\![u_1,\ldots,u_s]\!]$ by
\begin{equation}
  \label{eq:defoffi}
  f^1 \varphi_1 + \cdots + f^r \varphi_r + g \lambda = 
  \sum_{d \in \Eff(\cX)}
  \sum_{e=r+1}^N
  (-1)^{{1 \over 2} \deg \phi_e +1}
  \correlator{\phi^e \psi^{{1 \over 2} \deg \phi_e - 2}}^\cX_{0,1,d}
  u_1^{d_1} \cdots u_s^{d_s} \, b_e;
\end{equation}
such an equality exists because each class $b_e$ has degree $2$.
Recall the definition of the rational numbers $r_i$, $1 \leq i \leq
s$, from Section~\ref{sec:statement}.  Then:

\begin{cor} \label{cor:Ruan}
  The algebra obtained from the small quantum cohomology algebra of
  $Y$ by analytic continuation\footnotemark[6] in the
  parameters $q_{s+1},\ldots,q_r$ (if necessary) followed by the
  substitution
  \[
  q_i =
  \begin{cases}
    e^{c^i + f^i} u_i^{r_i} & 1 \leq i \leq s \\
    e^{c^i + f^i} & s < i \leq r
  \end{cases}
  \]
  is isomorphic to the small quantum cohomology algebra of $\cX$, via
  an isomorphism which sends $\alpha \in H^2(\cX;\CC) \subset H(\cX)$
  to $\pi^\star \alpha \in H(Y)$.
\end{cor}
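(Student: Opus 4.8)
The plan is to deduce Corollary~\ref{cor:Ruan} from Conjecture~\ref{CRC} by unpacking what the identification $\U(\cLX) = \cLY$ says at the level of small $J$-functions, and then extracting the small quantum product from the $J$-function via the standard first-order-differential-operator characterization. First I would recall that the small quantum cohomology algebra of $\cZ$ is recovered from the big $J$-function (hence from $\cLZ$) by the relation $z\,\partial_a\partial_b J_\cZ = \sum_\epsilon (\Phi_a \bullet \Phi_b, \Phi^\epsilon)\,\partial_\epsilon J_\cZ + O(z^{-1})$, restricted to the small locus $\tau = \tau^1\Phi_1 + \cdots$; this is the mechanism behind the analytic-continuation-of-products claim referenced in footnote~\ref{acfootnote}, and it shows that an isomorphism of Lagrangian cones intertwining the relevant grading and divisor structure induces an isomorphism of quantum products.

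The key steps, in order, are: (1) apply $\U$ to $J_\cX(u,-z)$, which lies on $\cLX$, so $\U(J_\cX(u,-z)) \in \cLY$ by Conjecture~\ref{CRC}; (2) use property~(b) of $\U$, i.e. $\U\circ(\rho\,\CR) = (\pi^\star\rho\,\cup)\circ\U$ for untwisted degree-two $\rho$, to control how $\U$ interacts with the divisor factors $u_i^{\phi_i/z}$ appearing in $J_\cX$, converting them into factors $u_i^{r_i\,\varphi_i/z}$ on the $Y$-side (the rational numbers $r_i$ enter precisely here, since $\pi^\star\omega_i = r_i\omega_i'$); (3) use property~(a), $\U(\fun_\cX) = \fun_Y + O(z^{-1})$, together with the refined expansion $\U(\fun_\cX) = \fun_Y - c z^{-1} + O(z^{-2})$ and the definitions \eqref{eq:defofci} and \eqref{eq:defoffi} of $c^i$, $f^i$, $b_e$, $g$, to compute the leading term of $\U(J_\cX(u,-z))$ and identify it — after the change of variables $q_i = e^{c^i+f^i}u_i^{r_i}$ for $i\le s$ and $q_i = e^{c^i+f^i}$ for $i > s$ — with $J_Y(q,-z)$, possibly analytically continued in $q_{s+1},\ldots,q_r$; (4) invoke the characterization of $J_\cZ(\tau,-z)$ as the unique family in $\cLZ$ of the form $-z + \tau + O(z^{-1})$ to conclude that the two $J$-functions genuinely match under this substitution; (5) differentiate twice and extract the quantum products, obtaining the claimed algebra isomorphism, which sends $\alpha\in H^2(\cX;\CC)$ to $\pi^\star\alpha$ because that is how property~(b) was built. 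Property~(c), $\U(\cHX^+)\oplus\cHY^- = \cHY$, ensures that $\U$ preserves the relevant polarization-dependent structures so that the "$O(z^{-1})$" bookkeeping in steps (3)–(5) is legitimate.

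The main obstacle is step~(3): tracking exactly how the corrections $c^i$, $f^i$ and the auxiliary classes $b_e$ conspire to produce the stated change of variables. The subtlety is that $\U$ does not send $\phi_e z^{1-\frac12\deg\phi_e}$ (for twisted or higher-degree $\phi_e$) to a multiple of a degree-two class on $Y$ on the nose — it does so only modulo $O(z^{-1})$ — and these degree-two contributions, weighted by the one-point descendant invariants $\correlator{\phi^e\psi^{\frac12\deg\phi_e-2}}^\cX_{0,1,d}$, are precisely what assemble into the power series $f^i$ via \eqref{eq:defoffi}. One has to argue that the semi-positivity hypothesis on $\cX$ forces these descendant invariants to contribute only in the right codimension, so that the expansion of $\U(J_\cX)$ to the order needed for reading off the small quantum product closes up into the small $J$-function of $Y$ rather than spilling into big quantum cohomology. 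Once semi-positivity is used to truncate the relevant expansions, the rest — the change of variables, the analytic continuation, and the extraction of the product — is the routine bookkeeping already carried out in \cite{Coates--Ruan}, modulo the $T$-equivariant caveat noted in the footnote, which is handled by observing that $\U$ has a non-equivariant limit so only non-negative powers of $\lambda$ appear throughout.
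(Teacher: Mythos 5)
The paper does not give its own argument for this corollary: it states the two corollaries (Corollary~\ref{cor:CCRC} and Corollary~\ref{cor:Ruan}) after the three standing assumptions and then cites \cite{Coates--Ruan} for the proofs, with a footnote explaining that the $T$-equivariant version follows by the same argument because $\U$ has a non-equivariant limit and hence involves only non-negative powers of $\lambda$. Your sketch fills in exactly the argument that the paper is deferring to: apply $\U$ to $J_\cX(u,-z) \in \cLX$; use property~(b) to convert the factors $u_i^{\phi_i/z}$ into $u_i^{r_i\varphi_i/z}$; use the expansions $\U(\fun_\cX) = \fun_Y - cz^{-1} + O(z^{-2})$ and $\U(\phi_e z^{1-\frac12\deg\phi_e}) = b_e + O(z^{-1})$ together with \eqref{eq:defofci}--\eqref{eq:defoffi} to see that the $z^0$-part of $\U(J_\cX(u,-z))$ is $\sum_i (c^i + f^i + r_i\log u_i)\varphi_i$ modulo $\lambda$ (here semi-positivity guarantees that nothing of degree $>2$ survives at $z^0$, so you stay on the small locus); then invoke uniqueness of the family $-z + \tau + O(z^{-1})$ on $\cLY$ to identify the result with $J_Y(q,-z)$ under the stated substitution, and read off the small quantum product from the tangent-space/second-derivative structure of the matched cones. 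This is the same mechanism as in \cite{Coates--Ruan}, so your proposal is correct and takes essentially the route that the paper (via citation) intends.

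One small point worth making explicit if you were to write this out in full: in step~(5) the claim that the induced algebra isomorphism sends $\alpha \in H^2(\cX;\CC)$ to $\pi^\star\alpha$ is not automatic from the $J$-function match alone; it comes from the fact that the identification of tangent spaces to the two cones is implemented by $\U$ itself, and property~(b) forces $\U(\alpha) = \pi^\star\alpha + O(z^{-1})$, so the degree-zero-in-$z$ part of the identification is multiplication by $\pi^\star$ on $H^2$. You gesture at this at the end of step~(5), but it deserves a sentence of its own since it is the content of the final clause of the corollary.
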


\noindent This is a ``quantum-corrected'' version of Ruan's Crepant
Resolution Conjecture.

\subsection*{Three Results Which We Will Need}

We next record three results which we will need below.  Part~(a)
follows from the String Equation: this is explained in \emph{e.g.}
\cite{Givental:symplectic}.  Part~(b) is a reconstruction result for
Gromov--Witten invariants --- it says that all genus-zero
Gromov--Witten invariants can be uniquely reconstructed from the
\emph{one-point descendants} $\correlator{\Phi_\alpha
  \psi^k}^\cZ_{0,1,d}$.  Part~(c) is a generalization of part~(b).
One can prove (b) and (c) by repeated application of the WDVV
equations and the Topological Recursion Relations; results along
similar lines can be found in \cite{Dubrovin,Kontsevich--Manin:GW,
  Bertram--Kley, Iritani:gen, Lee--Pandharipande, Rose}.

\begin{proposition} \ \label{thm:stuff}
  \begin{itemize}
  \item[(a)] The submanifold-germ $\cLZ \subset \cHZ$ is closed under
    multiplication by $\exp({a \lambda}/z)$ for any $a \in \CC$.
  \item[(b)] If $\cZ$ is semi-positive and the Chen--Ruan
    orbifold cohomology algebra of $\cZ$ is generated by
    $H^2(\cZ;\CC)$ then the submanifold-germ $\cLZ$ can be uniquely
    reconstructed from the small $J$-function $J_\cZ(q,z)$.
  \item[(c)] If $\cZ$ is semi-positive and $H^2_{\text{gen}} \subset
    H^2_{\text{CR}}(\cZ;\CC)$ is a subspace such that the Chen--Ruan
    orbifold cohomology algebra of $\cZ$ is generated by
    $H^2_{\text{gen}}$ then the submanifold-germ $\cLZ$ can be
    uniquely reconstructed from the restriction of the big
    $J$-function $\JJ_\cZ(\tau,z)$ to the locus $\tau \in
    H^2_{\text{gen}}$.  \qed
  \end{itemize}
\end{proposition}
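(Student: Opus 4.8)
For Proposition~\ref{thm:stuff}, the plan is to deduce (a) directly from the String Equation, to prove (c) by a reconstruction argument, and to obtain (b) as a special case of (c).

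For (a), recall that in Givental's formalism the String Equation is equivalent to the identity $J_\cZ(\tau + c\,\fun_\cZ, -z) = e^{-c/z}\,J_\cZ(\tau, -z)$, valid for any scalar $c \in \CC(\lambda)$, where $J_\cZ(\tau, -z)$ denotes the unique family of elements of $\cLZ$ of the form $-z + \tau + O(z^{-1})$. Taking $c = -a\lambda$ gives $J_\cZ(\tau - a\lambda\,\fun_\cZ, -z) = e^{a\lambda/z}\,J_\cZ(\tau, -z) \in \cLZ$. Since the operator $e^{a\lambda/z}$ is independent of $\tau$, differentiating in $\tau$ shows that it also carries the first derivatives $\partial_{\tau^b} J_\cZ(\tau, -z)$ to $\partial_{\tau^b} J_\cZ(\tau - a\lambda\,\fun_\cZ, -z)$, hence carries the tangent space $T_{(\tau)}$ of $\cLZ$ at $J_\cZ(\tau, -z)$ --- the $\CC(\lambda)[z]$-module generated by those derivatives --- onto $T_{(\tau - a\lambda\,\fun_\cZ)}$. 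Therefore $e^{a\lambda/z}$ permutes the rulings $z\,T_{(\tau)} \subset \cLZ$ and so preserves $\cLZ = \bigcup_\tau z\,T_{(\tau)}$; applying this with $-a$ in place of $a$ gives equality. This step is bookkeeping with the String Equation.

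For (c), the starting point is that $\cLZ$ is a ruled Lagrangian cone whose tangent space $T_{(\tau)}$ at $J_\cZ(\tau, -z)$ is the $\CC(\lambda)[z]$-module generated by the first derivatives $\partial_{\tau^a} J_\cZ(\tau, -z)$, $0 \le a \le N$; consequently $\cLZ$ is completely determined by the big $J$-function $\JJ_\cZ(\tau, z)$ regarded as a function of $\tau$ on all of $H(\cZ)$. It therefore suffices to recover $\JJ_\cZ$ in a neighbourhood of the locus $\tau \in H^2_{\text{gen}}$ from its restriction to that locus. The restriction packages the descendant invariants $\correlator{\gamma_1,\ldots,\gamma_n,\Phi_\epsilon\psi^m}^\cZ_{0,n+1,d}$ with all $\gamma_i \in H^2_{\text{gen}}$, and I would extend these to all genus-zero invariants by repeated use of the WDVV equations (to rewrite a general insertion as a product of generators and split a generator off), the Divisor Equation (for generators that are untwisted divisor classes), and the genus-zero Topological Recursion Relations (to reduce invariants with several descendant insertions to ones with fewer). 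The structural input that makes this possible is the generation hypothesis: in the classical limit in which all positive-degree curve classes are suppressed, the big quantum product $\bullet_\tau$ degenerates to the Chen--Ruan cup product, which by hypothesis is generated by $H^2_{\text{gen}}$; a standard filtered-ring (completeness) argument then shows $\bullet_\tau$ itself is generated by $H^2_{\text{gen}}$, so that every operator $\Phi_a\bullet_\tau$ is a $\CC(\lambda)$-linear combination of compositions of operators $\gamma\bullet_\tau$, $\gamma \in H^2_{\text{gen}}$; feeding this into the quantum differential equations satisfied by $\JJ_\cZ$ in the $H^2_{\text{gen}}$-directions then pins down all first $\tau$-derivatives of $\JJ_\cZ$ along $H^2_{\text{gen}}$, and hence $\JJ_\cZ$ in a neighbourhood.

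The hard part will be making these recursions terminate and making the passage from classical to quantum generation rigorous, and this is exactly what the semi-positivity hypothesis is for: excluding effective curve classes $d$ with $3 - \dim_\CC \cZ \le \langle c_1(T\cZ), d\rangle < 0$ guarantees that each application of WDVV or of the Topological Recursion Relations produces invariants of strictly smaller complexity --- in a suitable combined ordering on curve degree, number of marked points, and $\psi$-degree --- so that the reconstruction is a genuine well-founded induction, and it makes the virtual-dimension count behave so that only finitely many curve classes contribute in each cohomological degree and the completeness argument applies. (For the $3$-folds treated in this paper semi-positivity is automatic.) I would carry the induction out following the pattern of the reconstruction theorems cited above, noting that nothing changes in the $T$-equivariant setting. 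Finally, (b) is the case of (c) in which $H^2_{\text{gen}}$ is the image of the untwisted degree-two cohomology $H^2(\cZ;\CC)$ inside $H^\bullet_{\text{CR}}(\cZ;\CC)$: by the Divisor Equation --- used above to identify the restriction of $\JJ_\cZ$ to this locus with the small $J$-functions $J_Y(q,z)$ and $J_\cX(u,z)$ under $q_i = e^{\tau^i}$ and $u_i = e^{\tau^i}$, equivalently with the one-point descendants $\correlator{\Phi_\alpha\psi^k}^\cZ_{0,1,d}$ --- knowing the small $J$-function is the same as knowing that restriction, so the hypothesis of (b) is precisely the hypothesis of (c) in this case.
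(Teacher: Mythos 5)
Your proposal follows the paper's own (very brief) account exactly: the paper attributes (a) to the String Equation, citing \cite{Givental:symplectic} for what you spell out via the ruled-cone structure and the $\tau$-independence of $\exp(a\lambda/z)$, and characterizes (b) as a reconstruction-from-one-point-descendants result with (c) as its generalization, both provable ``by repeated application of the WDVV equations and the Topological Recursion Relations,'' pointing to the same reconstruction references you invoke. The one detail worth verifying against those references (\emph{e.g.} \cite{Iritani:gen}, \cite{Lee--Pandharipande}) is your gloss on what semi-positivity is doing --- your ``well-founded induction and finiteness'' explanation is plausible, but the paper gives no justification and the hypothesis enters those reconstruction theorems in a fairly specific technical way, so you should check that your account matches before leaning on it.
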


\noindent It is easy to check that in all the examples we consider
below, the Chen--Ruan cohomology algebra of $\cZ$ is generated in
degree $2$.

\subsection*{Computing Twisted Gromov--Witten Invariants}

As discussed above, in most of our examples $\cZ$ will be the total
space of a concave vector bundle $\cE$ over a compact orbifold $\cB$,
and the $T$-action on $\cZ$ will be the canonical $\Cstar$-action
which rotates the fibers of $\cE$ and covers the trivial action on
$\cB$.  In this situation $\Eff(\cZ)$ is canonically isomorphic to
$\Eff(\cB)$ and $H(\cZ)$ is canonically isomorphic to $H(\cB) :=
H^\bullet_{\text{CR}}(\cB;\CC) \otimes \CC(\lambda)$. Our bases
$\{\Phi_a\}$ and $\{\Phi^a\}$ for $H(\cZ)$ determine bases for
$H(\cB)$, which we also denote by $\{\Phi_a\}$ and $\{\Phi^a\}$.
Gromov--Witten invariants of $\cZ$ coincide with Gromov--Witten
invariants of $\cZ$ twisted, in the sense of
\cite{Coates--Givental:QRRLS, CCIT:computing}, by the $T$-equivariant
inverse Euler class $\be^{-1}$ and the vector bundle $\cE$.  Results
in \cite{CCIT:computing} allow the calculation of twisted
Gromov--Witten invariants in a quite general setting.  We will need
three special cases of these results, as follows.  Each of these
special cases determines a family of elements $q \mapsto I_\cZ(q,-z)$
of elements of $\cLZ$; in each case this family $I_\cZ(q,z)$ is an
appropriate \emph{hypergeometric modification} of the small
$J$-function $J_\cB(q,z)$ of $\cB$.

\begin{thm} \label{thm:smalllinebundle} Suppose that $\cE \to \cB$ is
  a concave line bundle.  Let $\rho$ denote the first Chern class of
  $\cE$, regarded as an element of localized $T$-equivariant
  Chen--Ruan cohomology $H(\cB)$ via the canonical inclusion
  $H^\bullet(\cB;\CC) \hookrightarrow H^\bullet_{\text{CR}}(\cB;\CC)$, and
  set
  \[
  M_\cE(d) := \prod_{\substack{b: \langle \rho,d \rangle < b \leq 0,\\
      \fr(b) = \fr(\langle \rho,d \rangle)}}
  (\lambda + \rho + b z)
  \]
  where $d \in \Eff(\cB)$ and $\fr(r)$ denotes the fractional part of
  $r$.  Let $k = b_2(\cB)$, so that the small $J$-function of $\cB$ is
  \[
  J_\cB(q,z) =  z \, q_1^{\Phi_1/z} \cdots q_k^{\Phi_k /z}
  \Bigg( 1 + \sum_{d \in \Eff(\cB)} q_1^{d_1} \cdots q_k^{d_k}  
  \correlator{\Phi_\epsilon \over z(z - \psi)}^\cB_{0,1,d}
  \Phi^\epsilon \Bigg). 
  \]
  Then
  \begin{equation}
    \label{eq:defofIZ}
    I_\cZ(q,z) :=  z \, q_1^{\Phi_1/z} \cdots q_k^{\Phi_k /z}
    \Bigg( 1 + \sum_{d \in \Eff(\cB)} 
    q_1^{d_1} \cdots q_k^{d_k} \, 
    M_\cE(d) \,
    \correlator{\Phi_\epsilon \over z(z - \psi)}^\cB_{0,1,d}
    \Phi^\epsilon \Bigg)
  \end{equation}
  satisfies $I_\cZ(q,-z) \in \cLZ$ for all $q$ in the domain of
  convergence of $I_\cZ$.
\end{thm}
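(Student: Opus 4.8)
The plan is to deduce Theorem~\ref{thm:smalllinebundle} from the twisted quantum Riemann--Roch results of \cite{CCIT:computing} (which build on \cite{Coates--Givental:QRRLS}); indeed the statement is really a specialization of those results, and what remains is the bookkeeping needed to put that specialization into the explicit hypergeometric form stated above. First I would use the reduction from the ``Twisted Gromov--Witten Invariants'' paragraph: since $\cE$ is a concave line bundle, the Gromov--Witten invariants of $\cZ$ are the Gromov--Witten invariants of $\cB$ twisted by the inverse $T$-equivariant Euler class $\be^{-1}$ and the bundle $\cE$, and under the canonical identifications $\Eff(\cZ) \cong \Eff(\cB)$ and $H(\cZ) \cong H(\cB)$ the Lagrangian submanifold-germ $\cLZ$ is identified with the corresponding twisted cone $\cL^{\tw}_\cB \subset \cH_\cB := H(\cB)\otimes\CC(\!(z^{-1})\!)$. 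So it suffices to prove $I_\cZ(q,-z) \in \cL^{\tw}_\cB$, which is now a statement about $\cB$ alone.

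The main input is then the twisted mirror theorem of \cite{CCIT:computing}: the twisted cone $\cL^{\tw}_\cB$ is the image of the ordinary (untwisted) cone $\cL_\cB$ of $\cB$ under a degree-preserving, $\CC(\!(z^{-1})\!)$-linear symplectic loop-group transformation $\Delta\colon \cH_\cB \to \cH_\cB$. For the present twisting data --- inverse Euler class of a single line bundle with $c_1(\cE) = \rho$ --- this $\Delta$ is the asymptotic expansion of an explicit product of $\Gamma$-factors in $\rho$, the equivariant parameter $\lambda$ and $z$, and the key point is that, when one tracks how $\Delta$ transforms a point of $\cL_\cB$ written out via the Divisor Equation, its effect on the coefficient of $q_1^{d_1}\cdots q_k^{d_k}$ is simply multiplication by a cohomology-valued factor. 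The content of the concave quantum Lefschetz formula is that, after the standard regularization, this factor is exactly $M_\cE(d)$; the congruence condition $\fr(b) = \fr(\langle\rho,d\rangle)$ in the definition of $M_\cE(d)$ is the orbifold refinement, coming from the age of $\cE$ along the twisted sectors of $\cB$ --- on such a sector $\rho$ is replaced by its age-shifted restriction and the $\Gamma$-factor product collapses to a product over $b$ congruent to $\langle\rho,d\rangle$ modulo $1$. All of this is carried out in \cite{CCIT:computing}.

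Given these, the rest is a short computation. Exactly as in the derivation of \eqref{eq:JYsmall} for $J_Y$, the Divisor Equation together with the defining equations of the cone gives $J_\cB(q,-z) \in \cL_\cB$. Applying $\Delta$ to this point --- which commutes past the prefactor $q_1^{\Phi_1/z}\cdots q_k^{\Phi_k/z}$, as all the operators in sight are cup products --- and using the previous paragraph, one reads off that the net effect on the degree-$d$ term is to insert the factor $M_\cE(d)$ into $\correlator{\Phi_\epsilon \over z(z - \psi)}^\cB_{0,1,d}\,\Phi^\epsilon$; that is, $\Delta\big(J_\cB(q,-z)\big) = I_\cZ(q,-z)$, whence $I_\cZ(q,-z) \in \cL^{\tw}_\cB = \cLZ$. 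Finally, since $\cE$ is concave we have $\langle\rho,d\rangle < 0$ for every nonzero $d \in \Eff(\cB)$, so for each such $d$ only finitely many $b$ satisfy the conditions defining $M_\cE(d)$, $M_\cE(d)$ is a polynomial in $z$, and $I_\cZ(q,z)$ is a well-defined $\cHZ$-valued power series; the statement then holds wherever it converges.

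The step I expect to be the main obstacle is the middle one: extracting from the general twisted quantum Riemann--Roch result of \cite{CCIT:computing} the exact shape of the multiplication factor, with the correct treatment of the equivariant parameter $\lambda$ and of each twisted sector of $\cB$ (the role of $\fr$). Everything before and after that is either quoted from the setup of Section~\ref{sec:statement} or a formal manipulation of the Lagrangian cone.
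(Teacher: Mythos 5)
Your overall framework — reduce to twisted invariants of $\cB$, identify $\cL_\cZ$ with the twisted cone $\cL^{\tw}_\cB$, restrict to small parameters via the Divisor Equation — matches the paper's. But the key step of your proposal contains a genuine error, and it is exactly the step you yourself flagged as the likely obstacle.

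You claim that the quantum Riemann--Roch transformation $\Delta$ (with $\cL^{\tw}_\cB = \Delta\cL_\cB$) acts on $J_\cB(q,-z)$ by inserting the factor $M_\cE(d)$ into the degree-$d$ coefficient, so that $\Delta\big(J_\cB(q,-z)\big) = I_\cZ(q,-z)$. This is false. The transformation $\Delta$ is a \emph{single fixed} $\CC(\!(z^{-1})\!)$-linear multiplication operator on $\cH_\cB$, independent of $d$. As you correctly note, it commutes with the prefactor $q_1^{\Phi_1/z}\cdots q_k^{\Phi_k/z}$, since both are multiplication by cohomology-valued series; but then the effect of $\Delta$ on the coefficient of $q_1^{d_1}\cdots q_k^{d_k}$ is multiplication by the same operator $\Delta$ for every $d$. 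It cannot produce the degree-dependent modification factor $M_\cE(d)$, which is a polynomial in $z$ of degree $-\langle\rho,d\rangle$ growing with $d$. In the simplest case $\cB=\PP^1$, $\cE=\cO(-1)$, one sees directly that $\Delta\big(z\,q^{p/z}\sum_d q^d/\prod(p+mz)^2\big) = z\,q^{p/z}\sum_d q^d\,\Delta\big/\!\prod(p+mz)^2$, which is \emph{not} $I_\cZ$. So the identity $\Delta(J_\cB) = I_\cZ$ does not hold, and $\Delta(J_\cB(q,-z))\in\cL^{\tw}_\cB$ does not by itself establish $I_\cZ(q,-z)\in\cL^{\tw}_\cB$.

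The paper avoids this entirely by invoking a different result from \cite{CCIT:computing}: Theorem~4.6 there, which is not the statement $\cL^{\tw}=\Delta\cL_\cB$ but rather the "hypergeometric modification lies on the twisted cone" statement. It decomposes $\JJ_\cB(\tau,z) = \sum_{\theta}J_\theta(\tau,z)$ over \emph{topological types} $\theta$ and proves that $I^\tw := \sum_\theta M_\theta(z)\,J_\theta$ lies in $\cL^\tw$, where the modification factors $M_\theta(z)$ genuinely depend on $\theta$ (and hence on $d$). The passage from the cone statement $\cL^\tw=\Delta\cL_\cB$ to this quantum-Lefschetz-type assertion is a nontrivial theorem, not a formal consequence of applying $\Delta$ to $J_\cB$. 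Once Theorem~4.6 is in hand, the rest of the paper's proof is a short restriction argument (set $\tau=\tau^1\Phi_1+\cdots+\tau^k\Phi_k$, note that only topological types $(0,d,(0,\ldots,0,i))$ survive, and for these $M_\theta(z)=M_\cE(d)$), and this part of your write-up is essentially right. To repair the proposal, you should replace the claim "$\Delta(J_\cB)=I_\cZ$" with a direct citation of Theorem~4.6 (or equivalently carry out the topological-type bookkeeping that proves it).
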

 
\begin{proof}
  Theorem~4.6 in \cite{CCIT:computing} concerns a Lagrangian
  submanifold-germ $\cL^\tw$ which encodes twisted Gromov--Witten
  invariants: in our situation, $\cL^\tw = \cL_\cZ$.  The Theorem
  gives a formula for a formal family $\tau \mapsto I^\tw(\tau,-z)$ of
  elements of $\cL^\tw$, as follows.  Let $\cI$ be a set which indexes
  the components of the inertia stack $\cI\cB$ of $\cB$, and let $0
  \in \cI$ be the index of the distinguished component $\cB \subset
  \cI\cB$.  One decomposes the big $J$-function of $\cB$ as a sum
  \[
  \JJ_\cB(\tau,z) = \sum_{\theta \in \NETT(\cB)} J_\theta(\tau,z)
  \]
  of contributions from stable maps of different \emph{topological
    types}; here $\NETT(\cB)$ is the set of topological types.  The
  topological type of a degree-$d$ stable map $f:\cC \to \cB$ from a
  genus-$g$ orbifold curve with $n$ marked points is the triple
  $(g,d,S)$, where $S = (i_1,\ldots,i_n)$ is the ordered $n$-tuple of
  elements of $\cI$ indexing the components of $\cI\cB$ picked out by
  the marked points.  Then
  \[
  I^\tw(\tau,z) := \sum_{\theta \in \NETT(\cB)} M_\theta(z) \cdot
  J_\theta(\tau,z)
  \]
  where $M_\theta(z)$ is a \emph{modification factor} defined in \S4.2
  of \cite{CCIT:computing}.

  If we set $\tau = \tau^1 \Phi_1 + \cdots + \tau^k \Phi_k$ then
  $J_\theta(\tau,z)$ vanishes unless the topological type $\theta$ is
  of the form $(0,d,S)$ where $S = (0,0,\ldots,0,i)$ for some $i \in
  \cI$; this is because the classes $\Phi_i$, $1 \leq i \leq k$ are
  supported on the distinguished component $\cB$ of $\cI\cB$.  In this
  case the modification factor $M_\theta(z)$ depends only on $d$ and
  is equal to $M_\cE(d)$.  Also,
  \[
  \JJ_\cB(\tau^1 \Phi_1 +
  \cdots + \tau^k \Phi_k,z) = 
  z \, e^{\tau^1 \Phi_1/z} \cdots e^{\tau^k \Phi_k /z}
  \Bigg( 1 + \sum_{d \in \Eff(\cB)} e^{d_1 \tau^1}
  \cdots e^{d_k \tau^k}   
  \correlator{\Phi_\epsilon \over z(z - \psi)}^\cB_{0,1,d}
  \Phi^\epsilon \Bigg)
  \]
  and it follows that $I^\tw(\tau^1 \Phi_1 + \cdots + \tau^k
  \Phi_k,z)$ is equal to
  \[
  z \, e^{\tau^1 \Phi_1/z} \cdots e^{\tau^k \Phi_k /z}
  \Bigg( 1 + \sum_{d \in \Eff(\cB)} e^{d_1 \tau^1}
  \cdots e^{d_k \tau^k}\, M_\cE(d)\,   
  \correlator{\Phi_\epsilon \over z(z - \psi)}^\cB_{0,1,d}
  \Phi^\epsilon \Bigg).
  \]
  Making the change of variables $q_i = e^{\tau^i}$, $1 \leq i \leq
  k$, we conclude that $I_\cZ(q,-z) \in \cLZ$ for all $q$ such that
  the series defining $I_\cZ$ converges.
\end{proof}

Exactly the same argument proves:

\begin{thm} \label{thm:smallvb}
  If $\cE = \cE_1 \oplus \cdots \oplus \cE_m$ is the direct sum of
  convex line bundles, 
  \[
  M_\cE(d) := \prod_{1 \leq i \leq m}
  M_{\cE_i}(d),
  \] 
  and $I_\cZ(q,z)$ is defined exactly as in \eqref{eq:defofIZ} then
  $I_\cZ(q,-z) \in \cLZ$ for all $q$ in the domain of convergence of
  $I_\cZ$. \qed
\end{thm}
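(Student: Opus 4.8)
The plan is to repeat the proof of Theorem~\ref{thm:smalllinebundle} essentially word for word, the one new ingredient being the behaviour of the modification factor under direct sums. Exactly as there, Gromov--Witten invariants of $\cZ$ coincide with Gromov--Witten invariants of $\cB$ twisted by the pair $(\be^{-1},\cE)$, and Theorem~4.6 of \cite{CCIT:computing} --- which applies to arbitrary twisting data, in particular to $\cE$ a direct sum of line bundles --- produces a formal family $\tau \mapsto I^\tw(\tau,-z)$ of elements of $\cL^\tw = \cLZ$ of the form
\[
I^\tw(\tau,z) = \sum_{\theta \in \NETT(\cB)} M_\theta(z) \cdot J_\theta(\tau,z),
\]
where $\JJ_\cB(\tau,z) = \sum_{\theta} J_\theta(\tau,z)$ is the decomposition of the big $J$-function of $\cB$ by topological type and $M_\theta(z)$ is the modification factor of \S4.2 of \cite{CCIT:computing}.

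The key point is that for $\cE = \cE_1 \oplus \cdots \oplus \cE_m$ the modification factor is multiplicative: $M_\theta(z) = \prod_{i=1}^m M^{(i)}_\theta(z)$, where $M^{(i)}_\theta(z)$ is the modification factor attached to the single line bundle $\cE_i$. This follows from the fact that the $T$-equivariant Euler class is multiplicative under direct sums, so that twisting by $(\be^{-1},\cE)$ is the composite of the twistings by $(\be^{-1},\cE_1),\ldots,(\be^{-1},\cE_m)$, and the modification factors of \cite{CCIT:computing}*{\S4.2} compose multiplicatively under iterated twisting. Granting this, one restricts $\tau$ to the locus $\tau = \tau^1 \Phi_1 + \cdots + \tau^k \Phi_k$ spanned by the degree-two classes supported on the distinguished component $\cB \subset \cI\cB$. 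Exactly as in the proof of Theorem~\ref{thm:smalllinebundle}, the family $J_\theta(\tau,z)$ then vanishes unless $\theta = (0,d,S)$ with $S = (0,\ldots,0,i)$ for some $i \in \cI$; for such $\theta$ the modification factor depends only on $d$, and by the multiplicativity above together with the line-bundle computation of Theorem~\ref{thm:smalllinebundle} it equals $\prod_{i=1}^m M_{\cE_i}(d) = M_\cE(d)$. Substituting the Divisor-Equation expression for $\JJ_\cB$ restricted to this locus and then making the change of variables $q_i = e^{\tau^i}$, $1 \leq i \leq k$, yields $I_\cZ(q,z)$ in the stated form and shows $I_\cZ(q,-z) \in \cLZ$ for all $q$ in the domain of convergence.

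The only step that requires genuine care --- and which I expect to be the main obstacle --- is establishing the multiplicativity $M_\theta(z) = \prod_i M^{(i)}_\theta(z)$: one must check that the iterated-twisting description of the pair $(\be^{-1},\bigoplus_i \cE_i)$ is legitimate and that the fractional-part (equivalently, age) bookkeeping in the factors $M_{\cE_i}(d)$ remains correct when the last marked point lands in a twisted sector of $\cB$. Once this is in place, the remainder of the argument is literally that of Theorem~\ref{thm:smalllinebundle}.
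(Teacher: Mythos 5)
Your proposal is correct and follows exactly the paper's route --- the paper's own ``proof'' consists literally of the phrase ``Exactly the same argument proves:'' with no further detail, and what you have written is simply that argument spelled out. The multiplicativity $M_\theta(z)=\prod_i M^{(i)}_\theta(z)$ that you flag as the one delicate point is in fact built into the definition of the modification factor in \cite{CCIT:computing}*{\S4.2} (it is given there for an arbitrary direct sum of line bundles, and the twisting class $\be^{-1}$ is multiplicative on K-theory), so that step requires no further verification and the iterated-twisting device, while valid, is not needed.
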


The final special case which we need is where $\cZ$ is the total space
of a direct sum of line bundles $\cE = \cE_1 \oplus \cdots \oplus
\cE_m$ over $\cB = B\ZZ_n$.  Components of the inertia stack of
$B\ZZ_n$ are indexed by fractions $k/n$, $0 \leq k < n$: the
component indexed by $k/n$ corresponds to the element $[k] \in
\ZZ_n$.  Let $\fun_{k/n} \in H(\cB)$ denote the orbifold
cohomology class which restricts to the unit class on the component of
the inertia stack indexed by $k/n$ and restricts to zero on
the other components.  The set $\{\fun_{k/n} : 0 \leq k < n\}$
forms a basis for $H(\cB)$; as $H(\cB)$ and $H(\cZ)$ are
canonically isomorphic it determines a basis for $H(\cZ)$ as well.

\begin{thm} \label{thm:BZn}
  Let $\cZ$ be the total space of the direct sum of line bundles $\cE
  = \cE_1 \oplus \cdots \oplus \cE_m$ over $\cB = B\ZZ_n$.  Let $e_i$
  be the integer such that $\cE_i$ is given by the character $[k]
  \mapsto \exp({2 \pi \tti e_i k  \over n})$ of $\ZZ_n$ and that
  $0 \leq e_i < n$.  Let
  \[
  P_{i,k} := \Big\{ b : \fr(b) = \fr\big({\textstyle-{e_i k \over
      n}}\big),  \,
  {\textstyle-{e_i k \over n}} < b \leq 0\Big\}
  \]
  and 
  \[
  I_\cZ(x,z) := \sum_{k \geq 0} 
  x^k { \prod_{i=1}^m \prod_{b \in P_{i,k}}
  \big({e_i \over n}\lambda + b z\big) \over k! \, z^k} \fun_{\fr({k/n})}.
  \]
  Then $x \mapsto I_\cZ(x,-z)$ is a formal family of elements of
  $\cLZ$.
\end{thm}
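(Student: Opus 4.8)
The plan is to deduce Theorem~\ref{thm:BZn} from Theorem~\ref{thm:smallvb}, by specializing that theorem to the base $\cB = B\ZZ_n$ and identifying all the ingredients explicitly. First I would recall that the Gromov--Witten theory of $B\ZZ_n$ is completely understood: $B\ZZ_n$ has no curve classes, so $\Eff(B\ZZ_n) = \{0\}$, $b_2(B\ZZ_n) = 0$, and there are no Novikov or $q$-variables. The small $J$-function is therefore just $J_\cB(z) = z \fun_0$ (only the degree-zero, unstable term survives, giving the identity on the distinguished sector). Thus the hypergeometric modification in \eqref{eq:defofIZ} collapses to its ``$d$-part'', and to get a nontrivial statement one must instead feed in the big $J$-function restricted to an appropriate degree-two locus; here, though, $B\ZZ_n$ has \emph{no} degree-two untwisted classes, so the mechanism that produces the variable $x$ is different: $x$ is the coordinate dual to the degree-zero twisted-sector class $\fun_{1/n}$, and the relevant family of elements of $\cLZ$ comes from the twisted version of the big $J$-function expanded along the twisted sectors, exactly as in Theorem~4.6 of \cite{CCIT:computing}.

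Concretely, I would revisit the proof of Theorem~\ref{thm:smalllinebundle}: there one writes $\JJ_\cB(\tau,z) = \sum_{\theta \in \NETT(\cB)} J_\theta(\tau,z)$ and applies the modification factors $M_\theta(z)$ termwise. For $\cB = B\ZZ_n$ the moduli spaces $\overline{\cM}_{0,n}(B\ZZ_n)$ are classifying stacks of finite groups, the one-point descendant invariants are elementary, and the big $J$-function restricted to the locus $\tau = x \fun_{1/n}$ is a sum over $k \geq 0$ of contributions from stable maps whose $k$ marked points all carry the sector $1/n$; the $k$-th such contribution lands in the sector $\fr(k/n)$ and, before modification, contributes $\tfrac{x^k}{k!\, z^k}\,\fun_{\fr(k/n)}$ (this is the standard computation of the $B\ZZ_n$ big $J$-function — see e.g.\ the mirror theorem for toric stacks \cite{CCIT:stacks}). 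Then I would compute the modification factor $M_\theta(z)$ for the topological type $\theta = (0, 0, (1/n,\ldots,1/n, \fr(k/n)))$ attached to $\cE = \bigoplus_i \cE_i$: by the product formula $M_\theta = \prod_i M_{\cE_i,\theta}$, and for a single line bundle $\cE_i$ with character exponent $e_i$, the age of $\cE_i$ along the sector $1/n$ is $\fr(e_i/n)$, so $M_{\cE_i,\theta}(z)$ is the product $\prod_{b \in P_{i,k}}\big(\tfrac{e_i}{n}\lambda + b z\big)$ coming from the $H^1$-Euler-class contribution of sections of $\cE_i$ over an orbifold $\PP^1$ with $k$ interior $\ZZ_n$-orbifold points of type $1/n$ — the Riemann--Roch count of these sections is exactly the index set $P_{i,k} = \{ b : \fr(b) = \fr(-e_i k/n),\ -e_i k/n < b \leq 0\}$. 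Multiplying the unmodified contribution by this factor and summing over $k$ gives precisely the stated $I_\cZ(x,z)$, and Theorem~4.6 of \cite{CCIT:computing} guarantees $I_\cZ(x,-z) \in \cLZ$.

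The main obstacle, and the step deserving the most care, is the explicit evaluation of the modification factor $M_\theta(z)$ for $\cB = B\ZZ_n$ — that is, matching the abstract recipe of \S4.2 of \cite{CCIT:computing} against the concrete product over $P_{i,k}$. One must correctly bookkeep: (i) which inertia sector each marked point and each node lies in, (ii) the age shifts $\fr(e_i k/n)$ of $\cE_i$ along those sectors, and (iii) the resulting range and step of the variable $b$ in the Euler-class product, including the boundary cases where $e_i k/n$ is an integer (so $P_{i,k}$ is empty and the factor is $1$, consistent with $\cE_i$ having no obstruction there). Everything else — the vanishing of $\Eff(B\ZZ_n)$, the shape of the unmodified big $J$-function along $\tau = x\fun_{1/n}$, and the product formula for $M_\theta$ over the summands $\cE_i$ — is routine given Theorems~\ref{thm:smalllinebundle} and \ref{thm:smallvb} and their proofs. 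So the proof is really: ``apply Theorem~\ref{thm:smallvb} with $\cB = B\ZZ_n$, and compute.''
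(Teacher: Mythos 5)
Your proof takes essentially the same route as the paper's: both decompose the big $J$-function of $B\ZZ_n$ into contributions $J_\theta$ indexed by topological type, restrict to the degree-zero twisted-sector locus $\tau = x\fun_{1/n}$, identify the unmodified contribution $\tfrac{x^k}{k!\,z^k}\,\fun_{\fr(k/n)}$ for each $k$, read off the modification factor $M_\theta(z) = \prod_{i}\prod_{b\in P_{i,k}}\big(\tfrac{e_i}{n}\lambda+bz\big)$ from \S4.2 of \cite{CCIT:computing}, and invoke Theorem~4.6 there to conclude $I_\cZ(x,-z)\in\cLZ$; you correctly recognize early on that Theorem~\ref{thm:smallvb} alone does not suffice and that one must descend to the underlying twisted-invariants theorem.

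One factual slip in your side remark about boundary cases: $P_{i,k}$ is \emph{not} empty whenever $e_i k/n$ is an integer. Writing $a = e_i k/n \geq 0$, the condition $-a < b \leq 0$ with $\fr(b)=\fr(-a)$ forces $b = -a+m$ for an integer $m$ with $1 \leq m \leq a$, so $|P_{i,k}| = \lfloor a \rfloor$; in particular, for $a=m \geq 1$ an integer one has $P_{i,k} = \{-(m-1),\ldots,-1,0\}$ with $m$ elements, and the factor is not $1$ (it even acquires a factor $\tfrac{e_i}{n}\lambda$ from $b=0$). The set $P_{i,k}$ is empty precisely when $0 \leq e_i k/n < 1$. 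This does not affect the main argument, which is sound, but the parenthetical interpretation of ``no obstruction'' at integer $e_ik/n$ is incorrect.
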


\begin{proof}
  We argue as in the proof of Theorem~\ref{thm:smalllinebundle}.  If
  we decompose the big $J$-function of $\cB$ as a sum
  \[
  \JJ_\cB(\tau,z) = \sum_{\theta \in \NETT(\cB)} J_\theta(\tau,z)
  \]
  of contributions from stable maps of different topological types and
  set
  \[
  I^\tw(\tau,z) := \sum_{\theta \in \NETT(\cB)} M_\theta(z) \cdot
  J_\theta(\tau,z)
  \]
  where $M_\theta(z)$ is defined in \cite{CCIT:computing}*{\S4.2}
  then $\tau \mapsto I^\tw(\tau,-z)$ defines a formal family of
  elements of $\cL^\tw = \cLZ$.  Proposition~6.1 in
  \cite{CCIT:computing} gives an explicit formula for the big
  $J$-function of $\cB = B \ZZ_n$, and we see from this that if $\tau = x
  \fun_{1 \over n}$ then $J_\theta(\tau,z)$ vanishes unless the
  topological type $\theta$ is $(0,0,S)$ with
  \[
  S = \Big(\overbrace{\textstyle{1 \over n},{1 \over n},\ldots,{1 \over
      n}}^{\text{$k$ times}},\fr \big(\textstyle{n-k \over
    n} \big) \Big).
  \]
  In this case, 
  \begin{align*}
    J_\theta(\tau,z) = {x^k \over k! \, z^k} \fun_{\fr({k/n})}
    && \text{and} &&
    M_\theta(z) = \prod_{i=1}^m \prod_{b \in P_{i,k}} \big(\textstyle{e_i
      \over n} \lambda + b z\big). 
  \end{align*}
  Thus $x \mapsto I_\cZ(x,-z)$ is a formal family of elements of
  $\cLZ$.
\end{proof}

\section{Example I: $\cX = \big[\CC^3/\ZZ_3\big]$, $Y = K_{\PP^2}$}
\label{sec:C3Z3}

Let $\cX$ be the orbifold $\big[ \CC^3/\ZZ_3 \big]$ where $\ZZ_3$ acts
on $\CC^3$ with weights $(1,1,1)$.  The coarse moduli space $X$ of
$\cX$ is the quotient singularity $\CC^3/\ZZ_3$ and the crepant
resolution $Y$ of $X$ is the canonical bundle $K_{\PP^2}$.

\subsection*{Toric Geometry}

Let $e_1$, $e_2$, $e_3$ denote the standard basis vectors for $\RR^3$.
The space $Y$ is the toric variety corresponding to a fan with rays
\begin{equation}
  \label{eq:KP2rays}
  \text{$e_1+e_3$, $e_2+e_3$, ${-e_1}-e_2+e_3$, $e_3$;}
\end{equation}
this fan is a cone over the picture in the plane $z=1$ shown in
Figure~\ref{fig:KP2fan}. 

\begin{figure}[bhtp]
  \centering
  \subfloat[$\cX$]{
    \begin{picture}(80,80)(-40,-40)
      \multiput(-30,-30)(30,0){3}{\makebox(0,0){$\cdot$}}
      \multiput(-30,0)(30,0){3}{\makebox(0,0){$\cdot$}}
      \multiput(-30,30)(30,0){3}{\makebox(0,0){$\cdot$}}
      \put(30,0){\makebox(0,0){$\bullet$}}
      \put(0,30){\makebox(0,0){$\bullet$}}
      \put(-30,-30){\makebox(0,0){$\bullet$}}
      \put(36,6){\makebox(0,0){$\scriptstyle 1$}}
      \put(6,36){\makebox(0,0){$\scriptstyle 2$}}
      \put(-36,-24){\makebox(0,0){$\scriptstyle 3$}}
      \put(30,0){\line(-1,1){30}}
      \put(-30,-30){\line(1,2){30}}
      \put(-30,-30){\line(2,1){60}}
    \end{picture}
    \label{fig:C3Z3fan}
  }
  \qquad \qquad
  \subfloat[$Y$]{
      \begin{picture}(80,80)(-40,-40)
      \multiput(-30,-30)(30,0){3}{\makebox(0,0){$\cdot$}}
      \multiput(-30,0)(30,0){3}{\makebox(0,0){$\cdot$}}
      \multiput(-30,30)(30,0){3}{\makebox(0,0){$\cdot$}}
      \put(30,0){\makebox(0,0){$\bullet$}}
      \put(0,30){\makebox(0,0){$\bullet$}}
      \put(-30,-30){\makebox(0,0){$\bullet$}}
      \put(0,0){\makebox(0,0){$\bullet$}}
      \put(36,6){\makebox(0,0){$\scriptstyle 1$}}
      \put(6,36){\makebox(0,0){$\scriptstyle 2$}}
      \put(-36,-24){\makebox(0,0){$\scriptstyle 3$}}
      \put(6,6){\makebox(0,0){$\scriptstyle 4$}}
      \put(30,0){\line(-1,1){30}}
      \put(-30,-30){\line(1,2){30}}
      \put(-30,-30){\line(2,1){60}}
      \put(0,0){\line(1,0){30}}
      \put(0,0){\line(0,1){30}}
      \put(0,0){\line(-1,-1){30}}
    \end{picture}
    }
    \caption{The fans for $\cX$ and $Y$ (respectively) are the cones over these
      pictures in the plane $z=1$}
  \label{fig:KP2fan}
\end{figure}
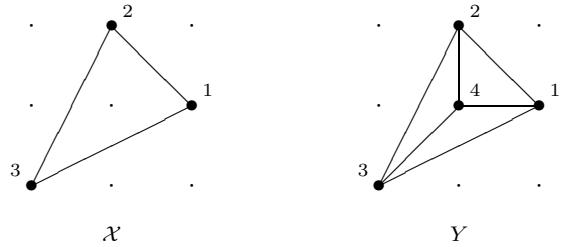
We can construct $Y$ as a GIT quotient, following \emph{e.g.}
\cite{Audin}, by considering the exact sequence
\begin{equation}
  \label{eq:KP2sequence}
  \begin{CD}
    0 @>>> \ZZ @>{
      \begin{pmatrix}
        1 \\ 1 \\ 1 \\ -3
      \end{pmatrix}}>>  
    \ZZ^4 @>{
      \begin{pmatrix} \textstyle
        1 & 0 & -1 & 0 \\
        0 & 1 & -1 & 0 \\
        1 & 1 &  1 & 1
      \end{pmatrix}}>> \ZZ^3 @>>> 0.
  \end{CD}.
\end{equation}
This shows that $Y$ is a quotient $\CC^4/\!\!/\Cstar$, where $\tau \in
\Cstar$ acts on $\CC^4$ as 
\begin{equation}
  \label{eq:KP2action}
  (x, y, z, w) \longmapsto
  (\tau x,  \tau y,   \tau z,   \tau^{-3} w)
\end{equation}
Dualizing \eqref{eq:KP2sequence} gives
\[
\begin{CD}
  0 @>>> \ZZ^3 @>{
    \begin{pmatrix} \textstyle
      1 & 0 & 0 \\
      0 & 1 & 0 \\
      -1 & -1 & 3 \\
      0 & 0 & 1
    \end{pmatrix}}>>
  \ZZ^4 @>{
\begin{pmatrix}
      1 & 1 & 1 & -3
    \end{pmatrix}
    }>> \ZZ @>>> 0
\end{CD}
\]
where the right-hand entry is $H^2(Y;\ZZ)$ and the columns of the
right-hand matrix give the four toric divisors in $Y$.  If we draw
this picture in $H^2(Y;\RR)$ then it gives the chamber decomposition
for the GIT problem (Figure~\ref{fig:KP2secondaryfan} below); this
chamber decomposition is also known as the \emph{secondary fan} for
$Y$.
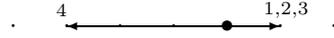
\begin{figure}[hbtp]
  \centering
     \begin{picture}(160,10)(-90,-5)
      \multiput(-80,0)(20,0){7}{\makebox(0,0){$\cdot$}}
      \put(0,0){\makebox(0,0){$\bullet$}}
      \put(22,6){\makebox(0,0){$\scriptstyle 1,2,3$}}
      \put(-62,6){\makebox(0,0){$\scriptstyle 4$}}
      \put(0,0){\vector(1,0){20}}
      \put(0,0){\vector(-1,0){60}}
    \end{picture}
    \caption{The secondary fan for $Y = K_{\PP^2}$}
  \label{fig:KP2secondaryfan}
\end{figure}

Each chamber in the secondary fan corresponds to a fan $\Sigma$ which
is a triangulation of the rays \eqref{eq:KP2rays}: a cone $\sigma$ is
in $\Sigma$ if and only if the co-ordinate subspace corresponding to
the complement of $\sigma$ covers the chosen chamber.  The fans are
shown in Figure~\ref{fig:KP2fan}.  For $\xi$ in the left-hand chamber
the GIT quotient $\CC^4 \GIT{\xi}\Cstar$ gives $\cX$; we delete the
locus $w=0$ from $\CC^4$ and then take the quotient by the action
\eqref{eq:KP2action}.  For $\xi$ in the right-hand chamber we have
$\CC^4 \GIT{\xi} \Cstar = Y$; we delete the locus $(x,y,z) = (0,0,0)$
from $\CC^4$ and then take the quotient by \eqref{eq:KP2action}.  For
$\xi = 0$ the quotient $\CC^4 \GIT{\xi} \Cstar$ gives the coarse
moduli space $X$.  Moving from the right-hand chamber into the
``wall'' $\xi = 0$ gives the resolution map $Y \to X$; this sends
\begin{align*}
  [x,y,z,w]
  \in \CC^4 \GIT{\xi} \Cstar
  && \text{to} &&
  \big[ x w^{1/3},  y w^{1/3}, z w^{1/3} \, \big]
  \in \CC^3/\ZZ_3
\end{align*}
where $[A]$ denotes class of $A$ in the appropriate quotient.

\subsection*{The $T$-Action}

Consider the action of $T = \Cstar$ on $\CC^4$ such that $\alpha \in
T$ acts as
\[
(x,y,z,w) 
\longmapsto
(x,y,z,\alpha w)
\]
This action descends to give $T$-actions on $\cX$, $X$, and $Y$.
The induced action on $\cX$ is 
\[
[x,y,z] \longmapsto
\big[ \alpha^{1/3} x, \alpha^{1/3} y, \alpha^{1/3} z \, \big]
\]
The induced action on $Y$ is the canonical $\Cstar$-action on the line
bundle $K_{\PP^2} \to \PP^2$; it covers the trivial action on
$\PP^2$.  The diagram
\[
\xymatrix{
  \cX \ar[rd] & & Y \ar[ld]\\
  &X&} 
\]
is $T$-equivariant.

\subsection*{Bases for Everything}

We have
\begin{align*}
  & r := \rank H^2(Y;\CC) = 1, &
  & s := \rank H^2(\cX;\CC) = 0.
\end{align*}
Let $p$ be the first Chern class of the line bundle $\cO(1) \to
\PP^2$, pulled back to $Y = K_{\PP^2}$ via the projection $K_{\PP^2}
\to \PP^2$.  The class $p$ has a canonical lift to $T$-equivariant
cohomology, which we also denote by $p$, and
\[
H(Y) = \CC(\lambda)[p]/\langle p^3 \rangle.
\]
We set
\begin{align*}
  & \varphi_0 = 1, &
  &  \varphi_1 = p, &
  & \varphi_2 = p^2, \\
  & \varphi^0 = \lambda p^2, &
  & \varphi^1 = \lambda p - 3 p^2, &
  & \varphi^2 = \lambda - 3 p.  
\end{align*}

The components of the inertia stack of $\cX$ are indexed by elements
of $\ZZ_3$.  Let $\fun_{k/3} \in H(\cX)$ denote the orbifold
cohomology class which restricts to the unit class on the inertia
component indexed by $[k] \in \ZZ_3$ and restricts to zero on the
other components.  Set
\begin{align*}
  &\phi_0 = \fun_0, &
  &\phi_1 = \fun_{1/3}, &
  &\phi_2 = \fun_{2/3}, \\
  & \phi^0 = \textstyle {\lambda^3 \over 9} \fun_0, &
  & \phi^1 = 3 \fun_{2/3}, &
  & \phi^2 = 3 \fun_{1/3}.  
\end{align*}

\subsection*{Step 1: A Family of Elements of $\cLY$}

Consider
\begin{equation}
  \label{eq:KP2IGamma}
  I_Y(y,z) := z \sum_{d \geq 0}
  { \Gamma\big(1 + {p \over z} \big)^3 
    \over \Gamma\big(1 + {p \over z} + d \big)^3}
  { \Gamma\big(1 + {\lambda - 3 p \over z} \big)
    \over \Gamma\big(1 + {\lambda - 3 p \over z} - 3d \big)} \, 
  y^{d + p/z}.
\end{equation}
This series converges in the region $\big\{y \in \CC : 0 < |y| < {1
  \over 27} \big\}$ to a multi-valued analytic function of $y$ which
takes values in $\cHY$.  We have
\begin{equation}
  \label{eq:KP2I}
  I_Y(y,z) = z \sum_{d \geq 0}
  {
    \prod_{-3d < m \leq 0} (\lambda - 3 p + m z)
    \over \prod_{0 < m \leq d} (p + m z)^3} \,
  y^{d + p/z}.
\end{equation}

\begin{proposition} \label{thm:KP2mirror}
  \begin{align*}
    I_Y(y,-z) \in \cLY && \text{for all $y$ such that $0<|y|<{1 \over
        27}$}. 
  \end{align*}
\end{proposition}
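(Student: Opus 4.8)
The plan is to realize $Y = K_{\PP^2}$ as the total space of a concave line bundle over $\PP^2$ and then read off the family $I_Y$ directly from Theorem~\ref{thm:smalllinebundle}. First I would note that $Y = K_{\PP^2}$ is the total space of the concave line bundle $\cE = \cO_{\PP^2}(-3)$ over $\cB = \PP^2$, and that the $T$-action on $Y$ described above is precisely the canonical $\Cstar$-action that scales the fibers of $\cE$ and covers the trivial action on $\PP^2$. Concavity is immediate: a non-constant genus-zero stable map $f:\cC\to\PP^2$ of degree $d>0$ pulls $\cO(-3)$ back to a line bundle of negative total degree on $\cC$, which has no global sections. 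Hence the $T$-equivariant Gromov--Witten invariants of $Y$ coincide with the $(\be^{-1},\cE)$-twisted Gromov--Witten invariants of $\PP^2$, and Theorem~\ref{thm:smalllinebundle} applies with $\cZ = Y$, $\cB = \PP^2$, $k = b_2(\PP^2) = 1$, and $\rho = c_1(\cE) = -3p$.

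Next I would recall the well-known closed formula (Givental's mirror theorem for $\PP^2$) for the small $J$-function,
\[
J_{\PP^2}(q,z) = z\, q^{p/z}\sum_{d\geq 0} \frac{q^d}{\prod_{m=1}^d (p+mz)^3}, \qquad p^3 = 0,
\]
which packages exactly the one-point descendants $\correlator{\Phi_\epsilon/\big(z(z-\psi)\big)}^{\PP^2}_{0,1,d}\Phi^\epsilon$ entering \eqref{eq:defofIZ}. Since $\langle\rho,d\rangle = -3d$ is an integer, the fractional-part condition in the definition of $M_\cE(d)$ just forces $b\in\ZZ$; checking that the paper's fiberwise $T$-action makes the $T$-equivariant Euler class of $\cE$ equal to $\lambda - 3p$, one obtains
\[
M_\cE(d) = \prod_{-3d < b\leq 0}(\lambda - 3p + b z).
\]
Feeding $J_{\PP^2}$ and $M_\cE(d)$ into \eqref{eq:defofIZ} then produces precisely the series \eqref{eq:KP2I} (after renaming $q$ to $y$ and rewriting $q^{p/z}q^d$ as $y^{d+p/z}$), so Theorem~\ref{thm:smalllinebundle} gives $I_Y(y,-z)\in\cLY$ for every $y$ in the domain of convergence of $I_Y$.

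Finally I would reconcile the two formulas \eqref{eq:KP2IGamma} and \eqref{eq:KP2I} for $I_Y$ using the elementary identities $\Gamma(1+a)/\Gamma(1+a+n) = \prod_{m=1}^n(a+m)^{-1}$ and $\Gamma(1+a)/\Gamma(1+a-3d) = \prod_{-3d<m\leq 0}(a+m)$, applied with $a = p/z$ and $a = (\lambda - 3p)/z$ (the powers of $z$ cancelling between numerator and denominator), and I would pin down the domain of convergence by Stirling's formula: the coefficient of $y^d$ in \eqref{eq:KP2I} grows like $(3d)!/(d!)^3 \sim 27^d$ up to lower-order factors, so the radius of convergence is $\tfrac1{27}$ and $I_Y$ converges on $\{0 < |y| < \tfrac1{27}\}$, the puncture at $0$ coming solely from the multivalued factor $y^{p/z}$. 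I expect the only delicate point to be the bookkeeping in the middle step: verifying the weight of the $T$-action so that $\be_T(\cE) = \lambda - 3p$ rather than $-\lambda - 3p$, and lining up the shifted index ranges so that the hypergeometric modification of $J_{\PP^2}$ matches \eqref{eq:KP2I} on the nose; everything else is a direct appeal to Theorem~\ref{thm:smalllinebundle} and to the $\PP^2$ mirror theorem.
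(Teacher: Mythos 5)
Your proof is correct and follows essentially the same route as the paper: identify $Y = K_{\PP^2}$ as the total space of the concave line bundle $\cO_{\PP^2}(-3)$ over $\PP^2$ with the fiberwise $T$-action, feed Givental's formula for $J_{\PP^2}$ and the modification factor $M_\cE(d)$ into Theorem~\ref{thm:smalllinebundle}, and observe that the result is exactly $I_Y$. The paper is terser --- it does not spell out concavity, the computation of $\be_T(\cE) = \lambda - 3p$, or the Stirling estimate for the radius of convergence --- but these are exactly the checks one should make, and you have made them correctly.
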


\begin{proof}
  We are in the situation of Theorem~\ref{thm:smalllinebundle} with
  $\cB = \PP^2$ and $\cE = \cO(-3)$.  Givental has proved
  \cite{Givental:equivariant} that the small $J$-function of $\PP^2$
  is
  \[
  J_{\PP^2}(q,z) = z \, q^{p/z} \sum_{d \geq 0}
  { q^{d} \over \prod_{0 < m \leq d} (p+mz)^3},
  \]
  and it follows (by comparing with the statement of
  Theorem~\ref{thm:smalllinebundle}) that
  \begin{align}
    \label{eq:P2Jpart}
    \correlator{{\Phi^\epsilon \over z - \psi}}^{\PP^2}_{0,1,d}
    \Phi_\epsilon = {1 \over \prod_{0 < m \leq d} (p+mz)^3}
    && \text{whenever $d>0$.}
  \end{align}
  Theorem~\ref{thm:smalllinebundle} thus implies that $I_Y(y,-z) \in
  \cLY$ for all $y$ in the domain of convergence of $I_Y$, as claimed.
\end{proof}

\subsection*{Step 2: $I_Y$ Determines $\cLY$}

We have:

\begin{cor} \label{cor:KP2mirror}
  \begin{align*}
    J_Y(q,z) &= e^{\lambda f(y)/z} I_Y(y,z) &
    \text{where} &&
    q & = y \exp \big(3 f(y)\big), \\
    && && f(y) &= \sum_{d>0} \textstyle{(3d-1)! \over (d!)^3} (-y)^d.
  \end{align*}
\end{cor}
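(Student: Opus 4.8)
The plan is to compare the series $I_Y(y,z)$ with the small $J$-function $J_Y(q,z)$ and recognize that $I_Y$ differs from $J_Y$ only by the change of variables $q = y\exp(3f(y))$ and the scalar factor $e^{\lambda f(y)/z}$. The starting observation is the asymptotic shape in $z$: expanding \eqref{eq:KP2I} in powers of $z^{-1}$, the $d=0$ term contributes $z\,y^{p/z} = z + p\log y + O(z^{-1})$, so $I_Y(y,z) = z\,y^{p/z}\bigl(1 + z^{-1} I^{(1)}(y) + O(z^{-2})\bigr)$ for a single cohomology-valued coefficient $I^{(1)}(y)$. First I would compute $I^{(1)}(y)$ explicitly: the $d\geq 1$ terms in \eqref{eq:KP2I} contribute to the $z^{-1}$-coefficient only through their leading behaviour, and a short calculation identifies the $p^0$-part (i.e.\ the coefficient of the unit) of $I^{(1)}(y)$ with $\lambda f(y)$ and the $p^1$-part with $3f(y)$, where $f(y) = \sum_{d>0}\frac{(3d-1)!}{(d!)^3}(-y)^d$; the $p^2$-part vanishes. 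The sign $(-y)^d$ arises because $\prod_{-3d<m\leq 0}(\lambda-3p+mz)$ has leading term in $z$ equal to $(-1)^{3d-1}(3d-1)!\,z^{3d-1}$ after we strip off the one factor that survives into the $z^{-1}$-coefficient.

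Next I would invoke Theorem~\ref{thm:smalllinebundle} (via Proposition~\ref{thm:KP2mirror}) together with Proposition~\ref{thm:stuff}(a): we know $I_Y(y,-z)\in\cLY$, and $\cLY$ is closed under multiplication by $\exp(a\lambda/z)$. Hence $e^{\lambda f(y)/z}I_Y(y,z)$, evaluated at $-z$, also lies in $\cLY$. By the uniqueness statement recalled just after the definition of the big $J$-function --- namely that $J_Y(\tau,-z)$ is the \emph{unique} family of elements of $\cLY$ of the form $-z+\tau+O(z^{-1})$ --- it suffices to check that $e^{\lambda f(y)/z}I_Y(y,z)$, after the substitution $q=y\exp(3f(y))$, has the normalized form $z + (\text{degree-2 class}) + O(z^{-1})$ with the degree-2 class equal to $(\log q)\,p = \bigl(\log y + 3f(y)\bigr)p$, which is exactly the argument $\tau^1\varphi_1$ at which $J_Y$ is being evaluated on the small locus. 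This is where the scalar factor and the change of variables conspire: multiplying by $e^{\lambda f(y)/z}$ kills the unwanted $\lambda f(y)\,z^{-1}$ term in $I^{(1)}(y)$, and the factor $y^{p/z}$ becomes $q^{p/z}$ precisely because $\log q - \log y = 3f(y)$ matches the surviving $3f(y)\,p$ part of $I^{(1)}(y)$ once one expands $q^{p/z} = e^{(\log q)p/z}$.

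So the key steps, in order, are: (1) extract the $z^{-1}$-expansion of $I_Y$ and compute its leading cohomology-valued coefficient, identifying the coefficients of $1$, $p$, $p^2$ with $\lambda f(y)$, $3f(y)$, $0$ respectively; (2) use Proposition~\ref{thm:stuff}(a) to conclude $e^{\lambda f(y)/z}I_Y(y,-z)\in\cLY$; (3) perform the substitution $q = y\exp(3f(y))$ and verify that the resulting family has the form $z + (\log q)p + O(z^{-1})$; (4) apply the uniqueness characterization of $J_Y$ on the small divisor locus to conclude equality. The main obstacle is step (1): one must be careful that no higher-order-in-$y$ corrections leak into the $z^{-1}$-coefficient from the $d\geq 2$ terms in a way that would spoil the clean identification with $f(y)$, and in particular that the combinatorial factor works out to exactly $\frac{(3d-1)!}{(d!)^3}$ with the correct sign; this is a routine but delicate bookkeeping of the leading $z$-powers in numerator and denominator of \eqref{eq:KP2I}. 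Everything else is formal, resting on results already established in the excerpt.
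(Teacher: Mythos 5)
Your proposal follows the paper's own proof almost verbatim: expand $I_Y$ to order $z^0$, invoke Propositions~\ref{thm:stuff}(a) and~\ref{thm:KP2mirror} to deduce that $e^{-\lambda f(y)/z}I_Y(y,-z)\in\cLY$, and then match with the uniqueness characterization of $J_Y$ on the small divisor locus. There is, however, a sign slip in step (1): with your normalization $I_Y(y,z)=z\,y^{p/z}\bigl(1+z^{-1}I^{(1)}(y)+O(z^{-2})\bigr)$, the coefficient of the unit class in $I^{(1)}(y)$ is $-\lambda f(y)$, not $\lambda f(y)$ (equivalently, $I^{(1)}(y)=-(\lambda-3p)f(y)$, which is exactly what the paper records as ``$I_Y(y,z)=z+p\log y-(\lambda-3p)f(y)+O(z^{-1})$''). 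Your subsequent reasoning --- that multiplying by $e^{\lambda f/z}$ cancels this contribution --- is consistent only with the correct sign $-\lambda f(y)$; with $+\lambda f(y)$ the factor would double the term rather than kill it. So fix that sign and the argument is precisely the paper's.
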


\begin{proof}
  We have
  \[
  I_Y(y,z) = z + p \log y - (\lambda - 3 p)f(y) + O(z^{-1}).
  \]
  Applying Propositions~\ref{thm:stuff} and~\ref{thm:KP2mirror}, we
  see that
  \begin{align*}
    y \mapsto e^{{-\lambda} f(y)/z} I_Y(y,-z), && 0 < |y| <
    \textstyle {1 \over 27}, 
  \end{align*}
  is a family of elements of $\cLY$.  But
  \[
  e^{-\lambda f(y)/z} I_Y(y,-z) = -z + p \log q + O(z^{-1}),
  \]
  where $q$ is defined above, and the unique family of elements of
  $\cLY$ of this form is $q \mapsto J_Y(q,-z)$.
\end{proof}

As $I_Y(y,z)$ is multivalued-analytic and the change of variables $y
\rightsquigarrow q$ is analytic, we conclude that the series defining
$J_Y(q,z)$ converges, when $|q|$ is sufficiently small, to a
multivalued analytic function of $q$.  Furthermore, as the small
$J$-function $J_Y(q,z)$ determines $\cLY$
(Proposition~\ref{thm:stuff}b), it follows that $\cLY$ is uniquely
determined by the fact that $y \mapsto I_Y(y,-z)$ is a family of
elements of $\cLY$.

\subsection*{Aside: Computing Gromov--Witten Invariants of $Y$}

As is well-known, Corollary~\ref{cor:KP2mirror} determines many
genus-zero Gromov--Witten invariants of $Y$.  The inverse to the
change of variables $y \rightsquigarrow q$ is
\[
y = q + 6q^2 + 9q^3 + 56q^4 - 300q^5+ \ldots
\] 
Substituting this into the equality
\[
z \, q^{p/z}
\Bigg( 1 + \sum_{d > 0} q^d
\correlator{\varphi_\epsilon \over z(z - \psi)}^Y_{0,1,d}
\varphi^\epsilon \Bigg) = 
z \, e^{\lambda f(y)/z} \sum_{d \geq 0}
{
  \prod_{-3d < m \leq 0} (\lambda - 3 p + m z)
  \over \prod_{0 < m \leq d} (p + m z)^3} \,
y^{d + p/z}
\]
and comparing coefficients of $q$, one finds that
\begin{align*}
  \correlator{\varphi^\alpha \over z - \psi}^Y_{0,1,1}
  \varphi_\alpha &= -{9p^2 \over z} + o(\lambda),
  & \correlator{\varphi^\alpha \over z - \psi}^Y_{0,1,2}
  \varphi_\alpha &= {135 p^2 \over 4z} + o(\lambda), \\
  \correlator{\varphi^\alpha \over z - \psi}^Y_{0,1,3}
  \varphi_\alpha &= -{244p^2 \over z} + o(\lambda), &
  \correlator{\varphi^\alpha \over z - \psi}^Y_{0,1,4}
  \varphi_\alpha &= {36999 p^2 \over 16z} + o(\lambda),
\end{align*}
and so on, where $o(\lambda)$ denotes terms containing strictly
positive powers of $\lambda$.  Taking the non-equivariant limit
$\lambda \to 0$ yields the local Gromov--Witten invariants $K_d$
calculated in \cite{Chiang--Klemm--Yau--Zaslow}*{\S2.2}:
\begin{align*}
  \correlator{\vphantom{\big\vert}p}^Y_{0,1,1}
  &= 3,
  & \correlator{\vphantom{\big\vert}p}^Y_{0,1,2}
  &= -{45 \over 4}, \\
  \correlator{\vphantom{\big\vert}p}^Y_{0,1,3}
  &= {244 \over 3}, &
  \correlator{\vphantom{\big\vert}p}^Y_{0,1,4}
  &= -{12333 \over 16},
\end{align*}
and therefore, using the Divisor Equation, we find
\begin{align*}
  K_1 &= 3, &
  K_2 &= {-{45 \over 8}}, &
  K_3 &= {244 \over 9}, &
  K_4 &= {-{12333 \over 64}}, &
  \text{etc.}
\end{align*}

\subsection*{Step 3: A Family of Elements of $\cLX$}

Let
\begin{equation}
  \label{eq:IC3Z3}
  I_\cX(x,z) := z \, x^{-\lambda/z}
  \sum_{l \geq 0} {x^l  \over l! \, z^l}
  \prod_{\substack{b : 0 \leq b < {l  \over 3} \\ \fr{b} = \fr{l
        \over 3}}} 
  \big(\textstyle {\lambda \over 3} - b z)^3 \;
  \fun_{\fr({l\over 3})}.
\end{equation}
This converges, in the region $|x|<27$, to an analytic function which
takes values in $\cHX$.  Theorem~\ref{thm:BZn} and
Proposition~\ref{thm:stuff}(a) imply that $I_\cX(x,-z) \in \cLX$ for
all $x$ such that $|x|<27$.

\subsection*{Step 4: $I_\cX$ Determines $\cLX$}

We have:

\begin{cor} \label{cor:C3Z3mirror}
  \begin{align*}
    \JJ_\cX\big(\tau^1 \fun_{1/3},z\big) &= x^{\lambda/z} I_\cX(x,z) & \text{where}
    & & \tau^1 &= \sum_{m \geq 0} (-1)^m {x^{3m+1} \over (3m+1)!}
    {\Gamma\big(m+\textstyle{1 \over 3}\big)^3 \over
      \Gamma\big(\textstyle{1 \over 3}\big)^3}.
  \end{align*}
\end{cor}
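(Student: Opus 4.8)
The plan is to follow the proof of Corollary~\ref{cor:KP2mirror} almost verbatim. Three facts do all the work: that $I_\cX(x,-z) \in \cLX$ for $|x|<27$ (Theorem~\ref{thm:BZn} together with Proposition~\ref{thm:stuff}(a), as observed in Step~3 above); that $\cLX$ is closed under multiplication by $\exp(a\lambda/z)$ for $a \in \CC$ (Proposition~\ref{thm:stuff}(a)); and the characterization of the big $J$-function recalled above, that $\JJ_\cX(\tau,-z)$ is the unique family of elements of $\cLX$ of the form $-z + \tau + O(z^{-1})$.

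First I would set $g(x,z) := x^{\lambda/z} I_\cX(x,z)$, so that the factor $x^{-\lambda/z}$ in the definition of $I_\cX$ cancels and
\[
g(x,z) = z\,\fun_0 + \sum_{l \geq 1} \frac{x^l}{l!\,z^{l-1}}
\prod_{\substack{b:\ 0 \leq b < l/3 \\ \fr(b) = \fr(l/3)}}
\big({\lambda\over 3} - b z\big)^3 \fun_{\fr(l/3)}.
\]
Since $x^{\lambda/(-z)} = \exp\big({-(\log x)}\lambda/z\big)$, the first two facts give $g(x,-z) = \exp\big({-(\log x)}\lambda/z\big)\,I_\cX(x,-z) \in \cLX$.

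The substance of the argument is to expand $g(x,z)$ in powers of $z$ and show that $g(x,z) = z\,\fun_0 + \tau^1 \fun_{1/3} + O(z^{-1})$ with $\tau^1$ the series in the statement; this is bookkeeping according to the residue of $l$ modulo $3$. When $l = 3m+1$ the hypergeometric product runs over the $m$ nonzero values $b = j + {1\over 3}$, $0 \leq j \leq m-1$, so the $l$-th summand is a combination of $z^{-3m}, z^{-3m+1}, \ldots, z^0$, and its $z^0$-coefficient is $(-1)^m \frac{x^{3m+1}}{(3m+1)!}\prod_{j=0}^{m-1}(j+{1\over 3})^3\,\fun_{1/3}$; using $\prod_{j=0}^{m-1}(j+{1\over 3}) = \Gamma(m+{1\over 3})/\Gamma({1\over 3})$ this is precisely the $m$-th term of $\tau^1\,\fun_{1/3}$. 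When $l = 3m+2$ the product again has $m$ nonzero factors and the summand involves only $z^{-3m-1}, \ldots, z^{-1}$; when $l = 3m$ with $m \geq 1$ the product runs over $b = 0, 1, \ldots, m-1$, but the $b=0$ factor $({\lambda/ 3})^3$ carries no $z$, so the summand involves only $z^{1-3m}, \ldots, z^{-2}$. Hence the only contribution of degree $z^1$ is the $l=0$ term $z\,\fun_0$, the only contributions of degree $z^0$ come from $l \equiv 1 \pmod 3$ and assemble to $\tau^1\fun_{1/3}$, and everything else is $O(z^{-1})$. Therefore $g(x,-z) = -z\,\fun_0 + \tau^1\fun_{1/3} + O(z^{-1})$; by the uniqueness characterization this forces $g(x,-z) = \JJ_\cX(\tau^1\fun_{1/3},-z)$, equivalently $\JJ_\cX(\tau^1\fun_{1/3},z) = x^{\lambda/z}I_\cX(x,z)$, which is the claim.

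I expect no genuine obstacle; the one point that needs care is the power-of-$z$ count in the case $l \equiv 0 \pmod 3$, where the vanishing of the $b=0$ contribution keeps that family of summands at order $z^{-2}$ rather than $z^{-1}$, so that the $z^0$-part of $g$ is supported purely on the twisted sector $\fun_{1/3}$. One structural difference from Corollary~\ref{cor:KP2mirror} is worth flagging: since $\cX$ has no untwisted degree-two classes ($s = 0$), there is no Divisor Equation available to convert this restricted big $J$-function into a small $J$-function, so throughout we work with $\JJ_\cX$ restricted to the line spanned by $\fun_{1/3}$.
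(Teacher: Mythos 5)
Your proof is correct and is essentially the paper's own: both use Theorem~\ref{thm:BZn} and Proposition~\ref{thm:stuff}(a) to place $x^{-\lambda/z}I_\cX(x,-z)$ in $\cLX$, identify its leading expansion, and invoke the uniqueness characterization of the big $J$-function. The only difference is that the paper asserts the expansion $-z + \tau^1\fun_{1/3} + O(z^{-1})$ without comment, whereas you have carried out the mod-$3$ bookkeeping (including the observation that the $b=0$ factor for $l\equiv 0$ depresses the top power of $z$) that justifies it.
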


\begin{proof}
  On the one hand, we know that $ x^{-\lambda/z} I_\cX(x,-z) \in \cLX$,
  and on the other hand we know that
  \[
  x^{-\lambda/z} I_\cX(x,-z) = -z + \left(\sum_{m \geq 0} (-1)^m {x^{3m+1} \over (3m+1)!}
  {\Gamma\big(m+\textstyle{1 \over 3}\big)^3 \over
    \Gamma\big(\textstyle{1 \over 3}\big)^3}\right) \fun_{1/3} +
  O(z^{-1}).
  \]
  As the unique family of elements of $\cLX$ of the form $-z + \tau^1
  \fun_{1/3} + O(z^{-1})$ is $\tau^1 \mapsto \JJ_\cX(\tau^1
  \fun_{1/3},-z)$, the result follows.
\end{proof}

Since $x^{\lambda/z} I_\cX(x,z)$ and the change of variables $x
\rightsquigarrow \tau^1$ are analytic, this implies that
$\JJ_\cX\big(\tau^1 \fun_{1/3},z\big)$ depends analytically on
$\tau^1$ in some region where $|\tau^1|$ is sufficiently small.  It
also, via Proposition~\ref{thm:stuff}c, shows that $\cLX$ is uniquely
determined by the fact that $x \mapsto I_\cX(x,-z)$ is a family of
elements of $\cLX$.

\subsection*{Aside: Computing Gromov--Witten Invariants of $\cX$}

Just as we did for $Y$, one can use Corollary~\ref{cor:C3Z3mirror} to
compute genus-zero Gromov--Witten invariants of $\cX$.  This
calculation is carried out in \cite{CCIT:computing}*{\S6.3}; it
verifies some of the predictions made by Aganagic, Bouchard, and Klemm
\cite{ABK}.

\subsection*{Step 5: The $B$-model Moduli Space and the Picard--Fuchs
  System}

The \emph{$B$-model moduli space} $\cM_B$ is the toric
orbifold corresponding to the secondary fan for $Y$.
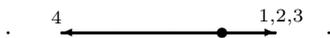
\begin{figure}[h]
  \centering
     \begin{picture}(160,10)(-90,-5)
      \multiput(-80,0)(20,0){7}{\makebox(0,0){$\cdot$}}
      \put(0,0){\makebox(0,0){$\bullet$}}
      \put(22,6){\makebox(0,0){$\scriptstyle 1,2,3$}}
      \put(-62,6){\makebox(0,0){$\scriptstyle 4$}}
      \put(0,0){\vector(1,0){20}}
      \put(0,0){\vector(-1,0){60}}
    \end{picture}
    \caption{The secondary fan for $Y = K_{\PP^2}$.}
\end{figure}
It has two co-ordinate patches, one for each chamber.  Let $x$ be the
co-ordinate corresponding to the left-hand chamber (recall that this
chamber gives rise to $\cX$) and let $y$ be the co-ordinate
corresponding to the right-hand chamber (recall that this chamber
gives $Y$).  The co-ordinate patches are related by
\begin{equation}
  \label{eq:KP2gluing}
  y = x^{-3}
\end{equation}
and it follows that $\cM_B$ is the weighted projective space
$\PP(1,3)$.  The space $\cM_B$ is called the $B$-model moduli space as
it is the base of the Landau--Ginzburg model (``the $B$-model'') which
corresponds to the quantum cohomology of $Y$ (``the $A$-model'') under
mirror symmetry: see \emph{e.g.}
\citelist{\cite{Givental:toric}\cite{Hori--Vafa}}.

We regard $I_\cX(x,z)$ as a function on the co-ordinate patch
corresponding to $\cX$ and $I_Y(y,z)$ as a function on the co-ordinate
patch corresponding to $Y$.  Writing
\[
I_Y(y,z) = 
I^0_Y\, \varphi_0 +
I^1_Y\, \varphi_1 +
I^2_Y\, \varphi_2, 
\]
the components $\big\{I^j_Y: j=0,1,2\big\}$, which are functions of
$y$, $\lambda$, and $z$, form a basis of solutions to the differential
equation\footnote{The equation \eqref{eq:KP2PF} is the Picard--Fuchs
  equation associated to the Landau--Ginzburg mirror to $Y$.  The fact
  that the quantum cohomology of $Y$ can be determined from this
  Picard--Fuchs equation has been proved many times from many
  different points of view: see \emph{e.g.}
  \citelist{\cite{Givental:elliptic}\cite{Chiang--Klemm--Yau--Zaslow}\cite{Lian--Liu--Yau:I}\cite{Elezi}}.}
\begin{align}
  \label{eq:KP2PF}
  D_y^3 f &= y (\lambda - 3 D_y)(\lambda - 3 D_y-z)(\lambda - 3 D_y-2z)
   f, & D_y = z y \textstyle {\partial \over \partial y}.
\end{align}
Writing
\[
I_\cX(x,z) = 
I^0_\cX \, \phi_0 + 
I^1_\cX \, \phi_1 + 
I^2_\cX \, \phi_2,
\]
the components $\big\{I^j_\cX: j=0,1,2\big\}$, which are functions of
$x$, $\lambda$, and $z$, form a basis of solutions to the differential
equation
\begin{align}
  \label{eq:C3Z3PF}
  D_x^3 f &= {-27}x^{-3}(\lambda + D_x)(\lambda + D_x - z)(\lambda +
  D_x -2 z)f, & D_x = z x \textstyle {\partial \over \partial x}.
\end{align}

Recall that the functions $I_Y^j$ are defined in a region where $|y|$
is small.  The change of variables \eqref{eq:KP2gluing} turns
\eqref{eq:KP2PF} into \eqref{eq:C3Z3PF}.  This implies that if we
analytically continue the functions $I_Y^j$ to a region where $|y|$ is
large (and hence $|x|$ is small), and then write the analytic
continuations $\widetilde{I}^j_Y$ in terms of the co-ordinate $x$,
then $\{\widetilde{I}^j_Y(x,z): j=0,1,2\big\}$ will satisfy
\eqref{eq:C3Z3PF}.  We have a basis of solutions to \eqref{eq:C3Z3PF},
given by the components $I_\cX^k(x,z)$ of $I_\cX$, and so
\begin{equation}
  \label{eq:KP2match}
  \begin{pmatrix}
    \widetilde{I}^0_Y(x,z) \\
    \widetilde{I}^1_Y(x,z) \\
    \widetilde{I}^2_Y(x,z)
  \end{pmatrix}
  = M(\lambda,z)
  \begin{pmatrix}
    I_\cX^0(x,z) \\
    I_\cX^1(x,z) \\
    I_\cX^2(x,z) 
  \end{pmatrix}
\end{equation}
for some $3 \times 3$ matrix $M$ which is independent of $x$ and $y$
(and hence depends only on $\lambda$ and $z$).  The matrix
$M(\lambda,-z)$ defines the $\CC(\!(z^{-1})\!)$-linear symplectic
transformation $\U:\cHX \to \cHY$ which we seek.  It remains to
calculate the analytic continuations and to determine the matrix $M$.

\subsection*{Step 6: Analytic Continuation}

To compute the analytic continuation of $I_Y(y,z)$ we use the
Mellin--Barnes method.  Good references for this are
\citelist{\cite{Horja}\cite{CDGP}\cite{CCIT:crepant1}*{Appendix}}. 
First, take the expression \eqref{eq:KP2IGamma} for $I_Y$ and apply
the identity $\Gamma(x) \Gamma(1-x) = \pi/\sin(\pi x)$ until each
factor $\Gamma(a + b d)$ which occurs has $b>0$:
\begin{equation}
  \label{eq:IKP2before}
  I_Y(y,z) = - \Theta_Y \sum_{d \geq 0}
  {\Gamma\big(3d-\textstyle{\lambda - 3p \over z}\big) \over 
    \Gamma\big(1+{p \over z} + d\big)^3}
  (-1)^d \, y^{d+p/z}
\end{equation}
where
\[
\Theta_Y = \pi^{-1} z \,
\Gamma\big(1+\textstyle{p \over z}\big)^3 \,
\Gamma\big(1+\textstyle{\lambda - 3p \over z}\big) \,
\sin\big(\pi\big[\textstyle{\lambda - 3p \over z}\big]\big).
\]
Then, in view of \cite[Lemma 3.3]{Horja}, consider the contour
integral
\begin{equation}
  \label{eq:contourintegral}
  \int_C \Theta_Y 
  { 
    \Gamma\big(3s - {\lambda-3p \over z}\big) 
    \Gamma(s) \Gamma(1-s)
    \over
    \Gamma\(1 + {p \over z} + s\)^3
  } q^{s + p/z} 
\end{equation}
where the contour of integration $C$ is chosen as in
Figure~\ref{fig:contour}.
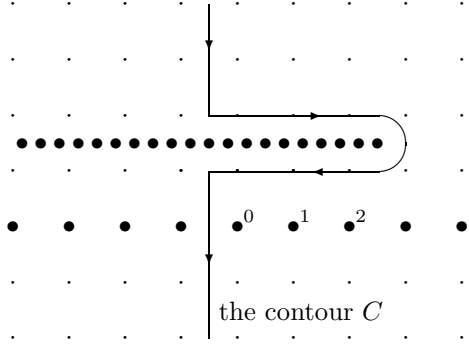
\begin{figure}[hbtp]
  \centering
  \unitlength 0.7pt
  \begin{picture}(260,200)(-130,-70)
    \multiput(-120,-60)(30,0){9}{\makebox(0,0){$\cdot$}}
    \multiput(-120,-30)(30,0){9}{\makebox(0,0){$\cdot$}}
    \multiput(-120,0)(30,0){9}{\makebox(0,0){$\bullet$}}
    \multiput(-120,30)(30,0){9}{\makebox(0,0){$\cdot$}}
    \multiput(-120,60)(30,0){9}{\makebox(0,0){$\cdot$}}
    \multiput(-120,90)(30,0){9}{\makebox(0,0){$\cdot$}}
    \multiput(-120,120)(30,0){9}{\makebox(0,0){$\cdot$}}
    \multiput(-30,0)(30,0){3}{\makebox(0,0){$\cdot$}}
    \multiput(-30,30)(30,0){3}{\makebox(0,0){$\cdot$}}
    \put(6,6){\makebox(0,0){$\scriptstyle 0$}}
    \put(36,6){\makebox(0,0){$\scriptstyle 1$}}
    \put(66,6){\makebox(0,0){$\scriptstyle 2$}}
    \multiput(75,45)(-10,0){20}{\makebox(0,0){$\bullet$}}
    \put(-15,30){\vector(0,-1){50}}
    \put(-15,-20){\line(0,-1){40}}
    \put(-15,20){\line(1,1){10}}
    \put(-15,60){\vector(1,0){60}}
    \put(45,60){\line(1,0){30}}
    \put(75,30){\vector(-1,0){35}}
    \put(40,30){\line(-1,0){55}}
    \put(-15,120){\vector(0,-1){25}}
    \put(-15,95){\line(0,-1){35}}
    \put(-15,70){\line(1,-1){10}}
    \put(75,45){\oval(30,30)[r]}
    \put(34,-45){\makebox(0,0){the contour $C$}}
  \end{picture}
  \caption{The contour of integration}
  \label{fig:contour}
\end{figure}
The integral \eqref{eq:contourintegral} is defined and
analytic throughout the region $|\arg(q)|<\pi$.  For $|q|<{1 \over
  27}$ we can close the contour to the right, and
\eqref{eq:contourintegral} is then equal to the sum of residues
\eqref{eq:IKP2before}.  For $|q|>{1 \over 27}$ we can close the
contour to the left, and then \eqref{eq:contourintegral} is equal to
the sum of residues at
\begin{align*}
  s = -1-n, & \quad n \geq 0, & \text{and} && s = \textstyle{\lambda - 3p \over 3z} -
  \textstyle{n \over 3}, & \quad n \geq 0.
\end{align*}
The residues at $s = -1-n$, $n \geq 0$, vanish in $H(Y)$ as they are
divisible by $p^3$.  Thus the analytic continuation $\widetilde{I}_Y$
of $I_Y$ is equal to the sum of the remaining residues:
\[
\Theta_Y \sum_{n \geq 0} {(-1)^n \over 3 . n!}
{ \pi \over \sin \big( \pi \big[\textstyle
  { \lambda - 3 p \over 3 z} - \textstyle{n \over 3} \big]
  \big)}
{1 \over \Gamma\big(1 + \textstyle {\lambda \over 3} - {n \over 3}
  \big)^3} \;
y^{\lambda/3z - n/3}.
\]
Writing this in terms of the co-ordinate $x$, we find that the
analytic continuation $\widetilde{I}_Y(x,{-z})$ is equal to
\begin{equation}
  \label{eq:IKP2ac}
  {-z}\, x^{\lambda /z} 
  \sum_{n \geq 0} {(-x)^n \over 3.n!}
  {\sin \big(\pi \big[ {\lambda - 3p \over z}\big] \big) \over
    \sin \big(\pi \big[ {\lambda - 3p \over 3z} + {n \over 3}\big]
    \big)}
  {\Gamma\big(1 - {p \over z}\big)^3 \over 
    \Gamma\big(1 - {\lambda \over 3 z} - {n \over 3}\big)^3} \,
  \Gamma\big(1-\textstyle{\lambda - 3p \over z}\big).
\end{equation}

\subsection*{Step 7: Compute the Symplectic Transformation} Our final
step is to compute the linear symplectic transformation $\U:\cHX \to
\cHY$ represented by the matrix $M(\lambda,-z)$.  We have
$\U(I_\cX(x,-z)) = \widetilde{I}_Y(x,-z)$, and
\begin{equation}
  \label{eq:IC3Z3firstfew}
  I_\cX(x,-z) = {-z}\, x^{\lambda/z} \Bigg(\fun_0 - {x \over z} \fun_{1/3} +
  {x^2 \over 2z^2}\fun_{2/3} + O(x^3)\Bigg).
\end{equation}
As the transformation $\U$ does not depend on $x$, we can compute it
by equating powers of $x$ in \eqref{eq:IKP2ac} and
\eqref{eq:IC3Z3firstfew}:
\begin{align*}
  & \U(\fun_0) = 
  {1 \over 3}
  {\sin \big(\pi \big[ {\lambda - 3p \over z}\big] \big) \over
    \sin \big(\pi \big[ {\lambda - 3p \over 3z}\big]
    \big)}
  {\Gamma\big(1 - {p \over z}\big)^3 \over 
    \Gamma\big(1 - {\lambda \over 3 z}\big)^3} \,
  \Gamma\big(1-\textstyle{\lambda - 3p \over z}\big)  \\
  & \U(\fun_{1/3}) = 
  {z \over 3}
  {\sin \big(\pi \big[ {\lambda - 3p \over z}\big] \big) \over
    \sin \big(\pi \big[ {\lambda - 3p \over 3z} + {1 \over 3}\big]
    \big)}
  {\Gamma\big(1 - {p \over z}\big)^3 \over 
    \Gamma\big(1 - {\lambda \over 3 z} - {1 \over 3}\big)^3} \,
  \Gamma\big(1-\textstyle{\lambda - 3p \over z}\big) \\
  & \U(\fun_{2/3}) = 
  {z^2 \over 3}
  {\sin \big(\pi \big[ {\lambda - 3p \over z}\big] \big) \over
    \sin \big(\pi \big[ {\lambda - 3p \over 3z} + {2 \over 3}\big]
    \big)}
  {\Gamma\big(1 - {p \over z}\big)^3 \over 
    \Gamma\big(1 - {\lambda \over 3 z} - {2 \over 3}\big)^3} \,
  \Gamma\big(1-\textstyle{\lambda - 3p \over z}\big). 
\end{align*}
The matrix $M$ of $\U$ does not have a simple form, but in the
non-equivariant limit it becomes
\begin{equation}
  \label{eq:UC3Z3nonequivariant}
  \begin{pmatrix}
    1 & 0 & 0 \\
    0 & -\frac{2 \pi }{\sqrt{3} \Gamma \left(\frac{2}{3}\right)^3} & -\frac{2 \pi  z}{\sqrt{3} \Gamma \left(\frac{1}{3}\right)^3} \\
    -\frac{\pi ^2}{3 z^2} & -\frac{2 \pi ^2}{3 z \Gamma \left(\frac{2}{3}\right)^3} & \frac{2 \pi ^2}{3 \Gamma \left(\frac{1}{3}\right)^3}
  \end{pmatrix}.
\end{equation}

From this point of view it is not obvious \emph{a priori} that $\U$ is
a symplectomorphism, or that it satisfies conditions (a) and (c) in
Conjecture~\ref{CRC} --- this is one advantage of the more
sophisticated approach taken in
\citelist{\cite{CCIT:crepant1}\cite{Iritani:inprogress}} --- but now that we
have an explicit expression for $\U$ it is easy to check these things.

\begin{thm}[The Crepant Resolution Conjecture for
  \protect{$\big[\CC^3/\ZZ_3\big]$}] \label{thm:C3Z3CRC}
  Conjecture~\ref{CRC} holds for $\cX = \big[\CC^3/\ZZ_3\big]$, $Y =
  K_{\PP^2}$.  
\end{thm}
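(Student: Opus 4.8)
The plan is to collect the results of Steps~1--7. The transformation whose existence Conjecture~\ref{CRC} asserts will be the map $\U\colon\cHX\to\cHY$ represented by the matrix $M(\lambda,-z)$ of \eqref{eq:KP2match}; equivalently, $\U$ is the unique $\CC(\!(z^{-1})\!)$-linear map with $\U\big(I_\cX(x,-z)\big)=\widetilde{I}_Y(x,-z)$, where $\widetilde{I}_Y$ is the analytic continuation of $I_Y$ computed in Step~6 and displayed in \eqref{eq:IKP2ac}. First I would note that $\U$ is an isomorphism: the components $I_\cX^0,I_\cX^1,I_\cX^2$ form a basis of solutions of the Picard--Fuchs system \eqref{eq:C3Z3PF}, and so do $\widetilde{I}_Y^0,\widetilde{I}_Y^1,\widetilde{I}_Y^2$ --- the latter because the change of variables \eqref{eq:KP2gluing} carries \eqref{eq:KP2PF} into \eqref{eq:C3Z3PF} --- so the matrix $M(\lambda,z)$ is invertible over $\CC(\!(z^{-1})\!)$.

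Next I would check, purely from the explicit formulas for $\U(\fun_0)$, $\U(\fun_{1/3})$, $\U(\fun_{2/3})$ obtained in Step~7 by matching \eqref{eq:IKP2ac} with \eqref{eq:IC3Z3firstfew}, that $\U$ is degree-preserving and satisfies conditions (a)--(c) of Conjecture~\ref{CRC}. Degree-preservation is immediate by inspection ($\fun_{k/3}$ has orbifold degree $2k$, and $\U(\fun_{k/3})$ lands in the appropriate graded piece of $\cHY$). Condition~(a), $\U(\fun_\cX)=\fun_Y+O(z^{-1})$, is the leading term of the formula for $\U(\fun_0)$ after expansion in powers of $z^{-1}$. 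Condition~(b) is vacuous here, since $s=\rank H^2(\cX;\CC)=0$ and there are no untwisted degree-two classes $\rho$ on $\cX$. Condition~(c), $\U\big(\cHX^+\big)\oplus\cHY^- = \cHY$, holds because the $z^0$, $z^1$, $z^2$ leading parts of $\U(\fun_0),\U(\fun_{1/3}),\U(\fun_{2/3})$ span $H(Y)$ over $\CC(\lambda)$ --- this is already visible in the non-equivariant limit \eqref{eq:UC3Z3nonequivariant} and persists over $\CC(\lambda)$.

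With these in hand I would deduce $\U(\cLX)=\cLY$. By Step~3, $x\mapsto I_\cX(x,-z)$ is a family of elements of $\cLX$, and by Corollary~\ref{cor:C3Z3mirror} together with Proposition~\ref{thm:stuff}(c) this family determines $\cLX$ uniquely, since the Chen--Ruan cohomology of $\cX$ is generated in degree~$2$. Similarly, by Proposition~\ref{thm:KP2mirror}, Corollary~\ref{cor:KP2mirror}, and Proposition~\ref{thm:stuff}(b), the family $y\mapsto I_Y(y,-z)$ determines $\cLY$, hence so does its analytic continuation $x\mapsto\widetilde{I}_Y(x,-z)$. Because $\U$ is a $\CC(\!(z^{-1})\!)$-linear symplectomorphism satisfying the transversality condition~(c), its image $\U(\cLX)$ is again a Lagrangian cone to which the reconstruction procedure applies, and it contains the family $x\mapsto\widetilde{I}_Y(x,-z)$; the uniqueness in Proposition~\ref{thm:stuff}(b) then forces $\U(\cLX)=\cLY$ along the chosen path of analytic continuation.

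The one genuinely substantive step is verifying that $\U$ is a symplectomorphism, i.e.\ $\Omega_Y(\U f,\U g)=\Omega_\cX(f,g)$; from the Mellin--Barnes output this is not manifest. I would check it directly, writing it as the matrix identity $M(-z)^{\mathsf T} G_Y\, M(z) = G_\cX$, where $G_\cZ$ is the Gram matrix of the pairing $(\cdot,\cdot)_\cZ$ in the chosen bases, and computing both sides using $H(Y)=\CC(\lambda)[p]/\langle p^3\rangle$ and the explicit formulas for $\U(\fun_{k/3})$. This reduces to a short list of Gamma-function identities --- the reflection formula $\Gamma(w)\Gamma(1-w)=\pi/\sin(\pi w)$ and the triplication formula for $\Gamma$ --- and is the place where the argument needs the most care. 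As the surrounding text notes, in the more conceptual framework of \cite{CCIT:crepant1} this symplectic property is automatic, so the computation here, while elementary, is the least structural part of the proof.
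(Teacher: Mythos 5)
Your proposal is correct and follows essentially the same route as the paper: the paper's proof of Theorem~\ref{thm:C3Z3CRC} is exactly the reduction ``$\U$ maps $I_\cX$ to $\widetilde{I}_Y$, and each cone is uniquely determined by the family it contains (Proposition~\ref{thm:stuff}(c) for $\cX$, (b) for $Y$), hence $\U(\cLX)=\cLY$ after continuation,'' with the verification of symplecticity and conditions (a), (c) deferred (as the paper puts it, ``now that we have an explicit expression for $\U$ it is easy to check these things''). Your write-up just spells out the deferred checks --- degree-preservation, the vacuity of (b) since $H^2(\cX;\CC)=0$, and the Gram-matrix reduction to Gamma-function identities for symplecticity --- in more detail than the paper does.
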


\begin{proof}
  It remains only to check that, after analytic continuation, $\U$
  maps $\cLX$ to $\cLY$.  But $\U$ was constructed so as to map
  $I_\cX$ to the analytic continuation of $I_Y$, and $\cLX$
  (respectively $\cLY$) is uniquely determined by the fact that $x
  \mapsto I_\cX(x,-z)$ is a family of elements of $\cLX$ (respectively
  that $y \mapsto I_Y(y,-z)$ is a family of elements of $\cLY$).  Thus
  $\U$ maps $\cLX$ to the analytic continuation of $\cLY$.
\end{proof}

\begin{cor}[The Cohomological Crepant Resolution Conjecture for
  \protect{$\big[\CC^3/\ZZ_3\big]$}] \label{cor:C3Z3CCRC}
  The algebra obtained from the $T$-equivariant small quantum
  cohomology algebra of $Y = K_{\PP^2}$ by analytic continuation in
  the parameter $q_1$ followed by the specialization $q_1 = 1$ is
  isomorphic to the $T$-equivariant Chen--Ruan orbifold cohomology of
  $\cX = \big[\CC^3/\ZZ_3\big]$.
\end{cor}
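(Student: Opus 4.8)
The plan is to obtain this immediately from Corollary~\ref{cor:CCRC}, which asserts that Conjecture~\ref{CRC} implies the ($T$-equivariant) Cohomological Crepant Resolution Conjecture. First I would check the three hypotheses of that corollary. Conjecture~\ref{CRC} holds here by Theorem~\ref{thm:C3Z3CRC}. The symplectic transformation $\U$ has a well-defined non-equivariant limit $\lambda \to 0$, as one sees directly from the explicit matrix~\eqref{eq:UC3Z3nonequivariant}. And $\cX = \big[\CC^3/\ZZ_3\big]$ is semi-positive, being a Calabi--Yau orbifold $3$-fold. So Corollary~\ref{cor:CCRC} applies.

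Next I would specialise the conclusion of Corollary~\ref{cor:CCRC} to the case at hand. Here $s = \rank H^2(\cX;\CC) = 0$ and $r = \rank H^2(Y;\CC) = 1$, so there are no parameters $q_i$ with $1 \leq i \leq s$, and nothing to set to $0$; the conclusion becomes the assertion that the algebra obtained from the $T$-equivariant small quantum cohomology of $Y$ by analytic continuation in $q_1$ followed by the substitution $q_1 = e^{c^1}$ is isomorphic to the $T$-equivariant Chen--Ruan orbifold cohomology of $\cX$. Here, as in~\eqref{eq:defofci}, $c^1 \in \CC$ is the coefficient of $\varphi_1 = p$ in the class $c \in H(Y)$ determined by $\U(\fun_\cX) = \fun_Y - c z^{-1} + O(z^{-2})$. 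So it remains only to verify that $e^{c^1} = 1$, i.e.\ that $c^1 = 0$.

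This last point is the one computational step, and it is the crux of the matter, since it is what forces the specialisation to be precisely $q_1 = 1$ rather than $q_1 = \omega$ for some other constant. I would in fact check that $c = 0$ outright, by expanding the explicit formula for $\U(\fun_0) = \U(\fun_\cX)$ obtained in Step~7 in descending powers of $z$. Using $p^3 = 0$ in $H(Y) = \CC(\lambda)[p]/\langle p^3\rangle$, one has that $(\lambda - 3p)^3$ is $O(z^{-3})$, so the identity $\sin 3b = 3 \sin b - 4\sin^3 b$ with $b = \pi(\lambda-3p)/(3z)$ shows the ratio of sines is $3 + O(z^{-2})$; and the Taylor expansion $\log\Gamma(1+x) = -\gamma x + O(x^2)$ shows the logarithm of the product of $\Gamma$-factors has no term linear in $z^{-1}$, since its three contributions $3\gamma\, p/z$ (from the cube in the numerator), $\gamma(\lambda - 3p)/z$, and $-\gamma\,\lambda/z$ (from the cube in the denominator) cancel. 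Hence $\U(\fun_0) = \tfrac{1}{3}\cdot 3 \cdot 1 + O(z^{-2}) = \fun_Y + O(z^{-2})$, so $c = 0$; in particular $c^1 = 0$, and the corollary follows. No step here is a genuine obstacle: the structure is entirely supplied by Corollary~\ref{cor:CCRC}, and the only work is this short power-series check.
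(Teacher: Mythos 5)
Your proposal is correct and is precisely the paper's argument: the paper's proof simply asserts that the constant $c^1$ of equation~\eqref{eq:defofci} vanishes and then invokes Corollary~\ref{cor:CCRC}, and your proposal fills in the short power-series verification of $c^1=0$ that the paper leaves implicit. (One very minor quibble: the remark that ``$p^3=0$ implies $(\lambda-3p)^3$ is $O(z^{-3})$'' is a non sequitur, since $(\lambda-3p)^3$ does not involve $z$ and is nonzero in $\CC(\lambda)[p]/\langle p^3\rangle$; what you actually use is just that $\sin^2 b = O(z^{-2})$ for $b=\pi(\lambda-3p)/(3z)$, which is immediate and needs no appeal to $p^3=0$. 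The rest of the computation — the $\sin 3b$ identity and the cancellation of the linear-in-$z^{-1}$ contributions to $\log\Gamma$ — is correct.)
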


\begin{proof}
  The quantity $c^1$ defined in \eqref{eq:defofci} is zero.  Now apply
  Corollary~\ref{cor:CCRC}.
\end{proof}

\begin{rem*}
  The symplectic transformation \eqref{eq:UC3Z3nonequivariant} with
  $z=1$ looks similar to the symplectic transformation computed by
  Aganagic--Bouchard--Klemm in \cite{ABK}, but it is not the same.  It
  would be interesting to understand the source of the discrepancy.
\end{rem*}

\section{Example II: $\cX = K_{\PP(1,1,3)}$}
\label{sec:KP113}

In this example we take $\cX:=K_{\PP(1,1,3)}$ to be the total space of
the canonical bundle of the weighted projective space $\PP(1,1,3)$ and
$Y$ to be the crepant resolution of the coarse moduli space of $\cX$.
We use essentially the same methods as before.  The slight changes in
method are needed to cope with the fact that, unlike all the other
examples considered in this paper, the non-compact toric variety $Y$
is \emph{not} presented as the total space of a vector bundle.

\subsection*{Toric Geometry}

Consider the action of $(\Cstar)^2$ on $\CC^5$ such that $(s,t) \in
(\Cstar)^2$ acts as 
\begin{equation}
  \label{eq:KP113action}
  (x,y,z,u,v)
  \longmapsto
  (tx,ty,sz,st^{-3}u,s^{-2}tv)
\end{equation}
The secondary fan is shown in
Figure~\ref{fig:KP113resolutionsecondaryfan}; the roman numerals there
label the different chambers. There is an exact sequence:
\[
\begin{CD}
  0 @>>> \ZZ^2 @>{
    \begin{pmatrix}
      0 & 1 \\ 0 & 1 \\ 1 & 0 \\ 1 & -3 \\ -2 & 1
    \end{pmatrix}}>>  
  \ZZ^5 @>{
    \begin{pmatrix} \textstyle
      1 & -1 & 0 & 0 & 0\\
      0 & 3 & -1 & 1 & 0 \\
      1 & 1 &  1 & 1 & 1
    \end{pmatrix}}>> \ZZ^3 @>>> 0.
\end{CD},
\]
and so each chamber in the secondary fan corresponds to a toric
orbifold with fan equal to some triangulation of the rays $e_1+e_3$,
${-e_1} + 3 e_2 + e_3$, ${-e_2} + e_3$, $e_2+e_3$, $e_3$.

\begin{figure}[htp]
  \centering
     \begin{picture}(90,110)(-45,-55)
      \multiput(-40,20)(20,0){4}{\makebox(0,0){$\cdot$}}
      \multiput(-40,0)(20,0){4}{\makebox(0,0){$\cdot$}}
      \multiput(-40,-20)(20,0){4}{\makebox(0,0){$\cdot$}}
      \multiput(-40,-40)(20,0){4}{\makebox(0,0){$\cdot$}}
      \multiput(-40,-60)(20,0){4}{\makebox(0,0){$\cdot$}}
      \put(0,1){\vector(0,1){20}}
      \put(0,-1){\vector(0,1){20}}
      \put(0,0){\vector(1,0){20}}
      \put(0,0){\vector(1,-3){20}}
      \put(0,0){\vector(-2,1){40}}
      \put(0,0){\makebox(0,0){$\bullet$}}
      \put(22,6){\makebox(0,0){$\scriptstyle 3$}}
      \put(2,26){\makebox(0,0){$\scriptstyle 1,2$}}
      \put(24,-58){\makebox(0,0){$\scriptstyle 4$}}
      \put(-42,26){\makebox(0,0){$\scriptstyle 5$}}
      \put(25,25){\makebox(0,0){I}}
      \put(25,-20){\makebox(0,0){II}}
      \put(-25,-20){\makebox(0,0){III}}
      \put(-20,25){\makebox(0,0){IV}}
    \end{picture}
  \caption{The secondary fan for $Y$}
  \label{fig:KP113resolutionsecondaryfan}
\end{figure}
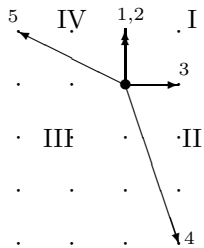

\noindent The fans are cones over the following pictures in the plane
$z=1$:
\begin{figure}[htbp]
  \centering
  \subfloat[I]{
    \begin{picture}(60,100)(-30,-30)
      \multiput(-20,-20)(20,0){3}{\makebox(0,0){$\cdot$}}
      \multiput(-20,0)(20,0){3}{\makebox(0,0){$\cdot$}}
      \multiput(-20,20)(20,0){3}{\makebox(0,0){$\cdot$}}
      \multiput(-20,40)(20,0){3}{\makebox(0,0){$\cdot$}}
      \multiput(-20,60)(20,0){3}{\makebox(0,0){$\cdot$}}
      \put(20,0){\makebox(0,0){$\bullet$}}
      \put(25,5){\makebox(0,0){$\scriptstyle 1$}}
      \put(-20,60){\makebox(0,0){$\bullet$}}
      \put(-25,65){\makebox(0,0){$\scriptstyle 2$}}
      \put(0,-20){\makebox(0,0){$\bullet$}}
      \put(-5,-24){\makebox(0,0){$\scriptstyle 3$}}
      \put(0,20){\makebox(0,0){$\bullet$}}
      \put(-5,23){\makebox(0,0){$\scriptstyle 4$}}
      \put(0,0){\makebox(0,0){$\bullet$}}
      \put(4,-4){\makebox(0,0){$\scriptstyle 5$}}
      \put(20,0){\line(-1,1){20}}
      \put(20,0){\line(-2,3){40}}
      \put(0,20){\line(-1,2){20}}
      \put(-20,60){\line(1,-3){20}}
      \put(-20,60){\line(1,-4){20}}
      \put(0,0){\line(1,0){20}}
      \put(0,0){\line(0,1){20}}
      \put(0,0){\line(0,-1){20}}
      \put(0,-20){\line(1,1){20}}
    \end{picture}
    \label{fig:KF2fan}
  }
  \qquad 
  \subfloat[II]{
    \begin{picture}(60,100)(-30,-30)
      \multiput(-20,-20)(20,0){3}{\makebox(0,0){$\cdot$}}
      \multiput(-20,0)(20,0){3}{\makebox(0,0){$\cdot$}}
      \multiput(-20,20)(20,0){3}{\makebox(0,0){$\cdot$}}
      \multiput(-20,40)(20,0){3}{\makebox(0,0){$\cdot$}}
      \multiput(-20,60)(20,0){3}{\makebox(0,0){$\cdot$}}
      \put(20,0){\makebox(0,0){$\bullet$}}
      \put(25,5){\makebox(0,0){$\scriptstyle 1$}}
      \put(-20,60){\makebox(0,0){$\bullet$}}
      \put(-25,65){\makebox(0,0){$\scriptstyle 2$}}
      \put(0,-20){\makebox(0,0){$\bullet$}}
      \put(-5,-24){\makebox(0,0){$\scriptstyle 3$}}
      \put(0,0){\makebox(0,0){$\bullet$}}
      \put(4,-4){\makebox(0,0){$\scriptstyle 5$}}
      \put(20,0){\line(-2,3){40}}
      \put(-20,60){\line(1,-3){20}}
      \put(-20,60){\line(1,-4){20}}
      \put(0,0){\line(1,0){20}}
      \put(0,0){\line(0,-1){20}}
      \put(0,-20){\line(1,1){20}}
    \end{picture}
    \label{fig:KP113fan}
  }
  \qquad 
  \subfloat[III]{
    \begin{picture}(60,100)(-30,-30)
      \multiput(-20,-20)(20,0){3}{\makebox(0,0){$\cdot$}}
      \multiput(-20,0)(20,0){3}{\makebox(0,0){$\cdot$}}
      \multiput(-20,20)(20,0){3}{\makebox(0,0){$\cdot$}}
      \multiput(-20,40)(20,0){3}{\makebox(0,0){$\cdot$}}
      \multiput(-20,60)(20,0){3}{\makebox(0,0){$\cdot$}}
      \put(20,0){\makebox(0,0){$\bullet$}}
      \put(25,5){\makebox(0,0){$\scriptstyle 1$}}
      \put(-20,60){\makebox(0,0){$\bullet$}}
      \put(-25,65){\makebox(0,0){$\scriptstyle 2$}}
      \put(0,-20){\makebox(0,0){$\bullet$}}
      \put(-5,-24){\makebox(0,0){$\scriptstyle 3$}}
      \put(20,0){\line(-2,3){40}}
      \put(-20,60){\line(1,-4){20}}
      \put(0,-20){\line(1,1){20}}
    \end{picture}
     \label{fig:C3Z5fan}
  }
  \qquad 
  \subfloat[IV]{
    \begin{picture}(60,100)(-30,-30)
      \multiput(-20,-20)(20,0){3}{\makebox(0,0){$\cdot$}}
      \multiput(-20,0)(20,0){3}{\makebox(0,0){$\cdot$}}
      \multiput(-20,20)(20,0){3}{\makebox(0,0){$\cdot$}}
      \multiput(-20,40)(20,0){3}{\makebox(0,0){$\cdot$}}
      \multiput(-20,60)(20,0){3}{\makebox(0,0){$\cdot$}}
      \put(20,0){\makebox(0,0){$\bullet$}}
      \put(25,5){\makebox(0,0){$\scriptstyle 1$}}
      \put(-20,60){\makebox(0,0){$\bullet$}}
      \put(-25,65){\makebox(0,0){$\scriptstyle 2$}}
      \put(0,-20){\makebox(0,0){$\bullet$}}
      \put(-5,-24){\makebox(0,0){$\scriptstyle 3$}}
      \put(0,20){\makebox(0,0){$\bullet$}}
      \put(-5,23){\makebox(0,0){$\scriptstyle 4$}}
      \put(20,0){\line(-1,1){20}}
      \put(20,0){\line(-2,3){40}}
      \put(0,20){\line(-1,2){20}}
      \put(-20,60){\line(1,-4){20}}
      \put(0,-20){\line(1,1){20}}
      \put(0,-20){\line(0,1){40}}
    \end{picture}
    \label{fig:KP122fan}
  }
  \caption{The fans corresponding to chambers~I--IV are
    the cones over these pictures in the plane $z=1$}
  \label{fig:KP113cones}
\end{figure}
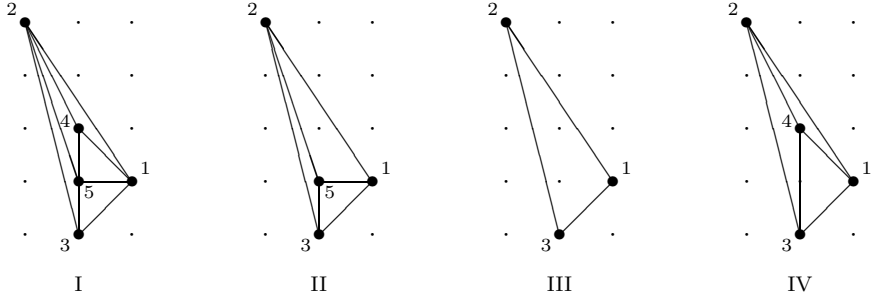

The toric orbifold corresponding to a chamber $C$ in the secondary fan
is the GIT quotient $\CC^5 \GIT{\xi} (\Cstar)^2$, $\xi \in C$.  This
is produced by deleting an appropriate union of co-ordinate subspaces
from $\CC^5$ and then taking the quotient by the action
\eqref{eq:KP113action}. When $C$ is chamber~I, the corresponding toric
orbifold is the resolution $Y$; chamber~II gives rise to the canonical
bundle $\cX = K_{\PP(1,1,3)}$; chamber~III gives the orbifold
$\big[\CC^3/\ZZ_5\big]$ where $\ZZ_5$ acts on $\CC^3$ with weights
$(1,1,3)$; and chamber~IV gives the canonical
bundle $K_{\PP(1,2,2)}$.
\begin{table}[htbp]
  \centering
  \begin{tabular}{@{}ccc@{}} \toprule
    \multicolumn{1}{c}{chamber} & 
    \multicolumn{1}{c}{locus to delete} & 
    \multicolumn{1}{c}{quotient} \\ \midrule
    I & $ \{z = u = 0\} \cup \{x = y = z =0\} \cup \{x=y=v=0\} $ & $Y$ \\[0.2em]
    II & $\{u = 0\} \cup \{x = y = z = 0\}$ & $\cX = K_{\PP(1,1,3)}$ \\[0.2em]
    III & $\{ u = 0\} \cup \{ v = 0\}$ & $\big[\CC^3/\ZZ_5\big]$ \\[0.2em]
    IV & $\{v = 0\} \cup \{x = y = z=0\}$ & $K_{\PP(1,2,2)}$ \\ \bottomrule
  \end{tabular}
  \caption{The different GIT quotients given by the secondary fan for
    $Y$}
  \label{tab:KP113quotients}
\end{table}

\noindent In this section we study the crepant resolution 
\begin{equation}
  \label{eq:KP113diagram}
  \xymatrix{
    Y \ar[rd] && K_{\PP(1,1,3)} \ar[ld] \\
    & X &}
\end{equation}
induced by moving from chamber~I to chamber~II.  In the next section
we consider the crepant partial resolution 
\[
  \xymatrix{
    K_{\PP(1,1,3)} \ar[rd]&& \big[\CC^3/\ZZ_5\big] \ar[ld] \\
    &\CC^3/\ZZ_5&
  }
\]
obtained by moving from chamber~II to chamber~III.  We will not
discuss chamber~IV at all.

\subsection*{The $T$-Action}

The action of $T = \Cstar$ on $\CC^5$ such that $\alpha \in
T$ maps
\[
(x,y,z,u,v)
\longmapsto
(x,y,z,u,\alpha v)
\]
descends to give actions of $T$ on $\cX$, $X$, and $Y$.  The induced
action on $\cX$ is the canonical $\Cstar$-action on the line bundle
$K_{\PP(1,1,3)} \to \PP(1,1,3)$; it covers the trivial action on
$\PP(1,1,3)$.  The crepant resolution \eqref{eq:KP113diagram} is
$T$-equivariant.

\subsection*{Bases for Everything}

We have
\begin{align*}
  & r := \rank H^2(Y;\CC) = 2, &
  & s := \rank H^2(\cX;\CC) = 1.
\end{align*}
Let $p_1, p_2 \in H(Y)$ denote the $T$-equivariant Poincar\'e-duals to
the divisors $\{z = 0\}$ and $\{x=0\}$ respectively, so that 
\[
H(Y) = \CC(\lambda)[p_1,p_2]/\big\langle p_2^2(\lambda+p_2 - 2p_1),
p_1(p_1-3p_2), p_1^2 p_2 \big\rangle.
\]
Set
\begin{align*}
  &\varphi_0 = 1, &
  &\varphi_1 = p_1, &
  &\varphi_2 = p_2, &
  &\varphi_3 = p_1 p_2, &
  &\varphi_4 = p_2^2.
\end{align*}

Write the inertia stack $\cIX$ of $\cX$ as the disjoint union $\cX_0
\coprod \cX_{1/3} \coprod \cX_{2/3}$, where $\cX_f$ is the component
of the inertia stack corresponding to the fixed locus of the element
$\big(1,e^{2 \pi \tti f}\big) \in (\Cstar)^2$.  We have $\cX_0 =
K_{\PP(1,1,3)}$ and $\cX_{1/3} = \cX_{2/3} = B\ZZ_3$.  Define $\fun_f
\in H(\cX)$ to be the class which restricts to the unit class on the
component $\cX_f$ and restricts to zero on the other components, and
let $p \in H(\cX)$ denote the first Chern class of the line bundle
$\cO(1) \to \PP(1,1,3)$, pulled back to $K_{\PP(1,1,3)}$ via the
natural projection and then regarded as an element of Chen--Ruan
cohomology via the inclusion $\cX =\cX_0 \to \cIX$.  Set
\begin{align*}
  &\phi_0 = \fun_0, &
  &\phi_1 = p, &
  &\phi_2 = p^2, &
  &\phi_3 = \fun_{1/3}, &
  &\phi_4 = \fun_{2/3},
\end{align*}
so that $r_1= {1 \over 3}$.

\subsection*{Step 1: A Family of Elements of $\cLY$}

Consider
\begin{multline}
  \label{eq:resolutionIGamma}
  I_Y(y_1,y_2,z) :=  z \sum_{k,l \geq 0}
  { \Gamma\big(1 + {p_2 \over z} \big)^2 
    \over \Gamma\big(1 + {p_2 \over z} + l \big)^2}
  { \Gamma\big(1 + {p_1 \over z} \big)
    \over \Gamma\big(1 + {p_1 \over z} + k \big)}
  { \Gamma\big(1 + {p_1 - 3 p_2 \over z} \big)
    \over \Gamma\big(1 + {p_1 - 3 p_2 \over z} + k - 3l \big)}
  \times \\
  { \Gamma\big(1 + {\lambda +p_2 - 2 p_1 \over z} \big)
    \over \Gamma\big(1 + {\lambda +p_2 - 2 p_1 \over z} +l - 2k \big)} \, 
  y_1^{k + p_1/z} y_2^{l + p_2/z}.
\end{multline}
This series converges, in a region where $|y_1|$ and $|y_2|$ are
sufficiently small, to a multi-valued analytic function of $(y_1,y_2)$
which takes values in $\cHY$.   We have:
\begin{multline*}
  I_Y(y_1,y_2,z) =  z \sum_{k,l \geq 0}
  {
    y_1^{k + p_1/z} y_2^{l + p_2/z} \over
    \prod_{m=1}^{m=l} (p_2 + m z)^2
    \prod_{m=1}^{m=k} (p_1 + m z)
  }
  { 
    \prod_{m \leq 0} (p_1 - 3 p_2 + m z) 
    \over
    \prod_{m \leq k-3l} (p_1 - 3 p_2 + m z) 
  } \\ \times
  { 
    \prod_{m \leq 0} (\lambda + p_2 - 2 p_1 + m z) 
    \over
    \prod_{m \leq l-2k} (\lambda + p_2 - 2 p_1 + m z) 
  }.
\end{multline*}
Note that all but finitely many terms in the infinite products here cancel.

\begin{proposition} \label{thm:KP113resolution}
 \begin{align*}
    I_Y(y_1,y_2,-z) \in \cLY && \text{for all $(y_1,y_2)$ in the
      domain of convergence of $I_Y$.}
  \end{align*}
\end{proposition}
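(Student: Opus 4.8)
The plan is to recognise $I_Y$ as the Givental-style hypergeometric $I$-function attached to the toric presentation of $Y$ and then to quote the $T$-equivariant mirror theorem for toric varieties. Recall that $Y = \CC^5 \GIT{\xi}(\Cstar)^2$ for $\xi$ in chamber~I, so the five coordinates on $\CC^5$ determine five toric divisors on $Y$. Reading off the action \eqref{eq:KP113action}, one computes their ($T$-equivariant) classes to be $p_2$ (from $x$), $p_2$ (from $y$), $p_1$ (from $z$), $p_1 - 3 p_2$ (from $u$), and $\lambda + p_2 - 2 p_1$ (from $v$); the parameter $\lambda$ enters only through the last class because $\{v = 0\}$ is the only one of the five divisors that the $T$-action moves. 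Writing $d = (k,l)$ for a curve class with $\langle p_1, d \rangle = k$ and $\langle p_2, d \rangle = l$, one then checks that each $\Gamma$-quotient in \eqref{eq:resolutionIGamma} is exactly the hypergeometric factor $\prod_{m \le 0}(D + m z)\big/\prod_{m \le \langle D, d\rangle}(D + m z)$ associated to the corresponding toric divisor $D$ --- this is most transparent from the product form displayed just after \eqref{eq:resolutionIGamma}, in which all but finitely many terms of the infinite products cancel. Thus $I_Y$ is precisely the $I$-function of the semi-projective toric variety $Y$, written using the basis $p_1, p_2$ of $H^2(Y)$.

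With this identification in hand, I would invoke the mirror theorem for toric Deligne--Mumford stacks \cite{CCIT:stacks} --- of which a smooth semi-projective toric variety is a special case --- in the $T$-equivariant form appropriate to a target carrying a torus action with compact fixed locus; see also \cite{Givental:toric,Givental:elliptic,Lian--Liu--Yau:I}. That theorem asserts exactly that $I_Y(y_1,y_2,-z)$ lies on $\cLY$ for all $(y_1,y_2)$ in the region where the defining series converges, the convergence being the analytic statement already recorded before the Proposition. Two verifications are required before applying it: first, that our standing hypothesis --- $Y$ is a toric variety with a $\Cstar$-action whose fixed locus is compact --- puts us inside the scope of the equivariant mirror theorem, so that the localized equivariant Gromov--Witten invariants and the cone $\cLY$ are defined; and second, that \eqref{eq:resolutionIGamma} agrees term by term with the $I$-function output by the theorem, matching the prefactor $z\, y_1^{p_1/z} y_2^{p_2/z}$, the summation over $\{(k,l) : k, l \ge 0\}$, and the single $\lambda$-dependent factor with the equivariant class of the divisor on which $T$ acts nontrivially.

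The main obstacle is exactly the point flagged in the text: unlike Examples~I, III and~IV, the toric $3$-fold $Y$ is \emph{not} the total space of a concave bundle over a compact orbifold, so Theorems~\ref{thm:smalllinebundle}--\ref{thm:BZn}, which reduce everything to ordinary Gromov--Witten theory of a compact base, do not apply; instead one must feed the full equivariant toric mirror theorem into Step~1 and do the bookkeeping in the non-compact, $T$-equivariant direction with care. Concretely, one must be certain that the factor $\Gamma\big(1 + \tfrac{\lambda + p_2 - 2p_1}{z}\big)\big/\Gamma\big(1 + \tfrac{\lambda + p_2 - 2p_1}{z} + l - 2k\big)$ really is the contribution of the $T$-moved divisor $\{v = 0\}$, carrying the correct equivariant weight, rather than of any of the other four divisors, and that the resulting family is a genuine $\cHY$-valued multi-valued analytic function on the stated domain. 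Once the identification of $I_Y$ with the toric $I$-function is pinned down with the right signs and weights, the Proposition follows immediately.
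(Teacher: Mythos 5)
Your proposal is correct and follows essentially the same route as the paper: you identify $I_Y$ with the hypergeometric toric $I$-function attached to the GIT presentation (computing the five equivariant toric divisor classes $p_2,p_2,p_1,p_1-3p_2,\lambda+p_2-2p_1$ exactly as needed) and then appeal to the $T$-equivariant toric mirror theorem, noting that the non-compactness of $Y$ takes you outside the scope of Theorems~\ref{thm:smalllinebundle}--\ref{thm:BZn}. The paper's proof says precisely this, pointing to the argument behind Theorem~0.1 of \cite{Givental:toric} (which applies unchanged to the non-compact $Y$) and offering \cite{CCIT:stacks} as the alternative you lead with.
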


\begin{proof}
  The argument which proves Theorem~0.1 in \cite{Givental:toric} also
  proves the claim here.  Theorem~0.1 as stated only applies to
  compact toric varieties, but the proof works for the non-compact
  toric variety $Y$ as well. The reader who would prefer not to check
  this should wait for the full generality of \cite{CCIT:stacks}.
\end{proof}

\subsection*{Step 2: $I_Y$ Determines $\cLY$}

We have:

\begin{cor} \label{cor:resolutionmirror}
  \[
  J_Y(q_1,q_2,z) = e^{\lambda g(y_1,y_2)/z} I_Y(y_1,y_2,z) 
  \]
  where:
  \begin{align*}
    & q_1 = y_1 \exp\big( 2g(y_1,y_2) - f(y_1,y_2)\big), 
    && q_2 = y_2 \exp\big( 3 f(y_1,y_2) - g(y_1,y_2) \big), \\
    & f(y_1,y_2) = 
    \sum_{\substack{0<l<\infty \\ 0 \leq k \leq  l/2}}
    {(-1)^{3l-k}(3l-k-1)! \over (l!)^2 k! (l-2k)!} y_1^k y_2^l,
    && g(y_1,y_2) = \sum_{\substack{0<k<\infty \\ 0 \leq l \leq k/3}}
    {(-1)^{2k-l}(2k-l-1)! \over (l!)^2 k! (k-3l)!} y_1^k y_2^l.
  \end{align*}
\end{cor}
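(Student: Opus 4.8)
The plan is to mimic the proof of Corollary~\ref{cor:KP2mirror}. First I would expand $I_Y(y_1,y_2,z)$ as a series in $z^{-1}$, retaining only the terms of $z$-degree $1$ and $0$; then twist by an exponential factor so as to kill the $\lambda$-component of the $z^0$ term; and finally identify the result with the big $J$-function $\JJ_Y$ restricted to the divisor locus, using the uniqueness of the family of elements of $\cLY$ of the form $-z+\tau+O(z^{-1})$ together with the Divisor Equation.

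The heart of the matter is computing $I_Y(y_1,y_2,z)$ modulo $z^{-1}$. The $(k,l)=(0,0)$ summand gives $z\,y_1^{p_1/z}y_2^{p_2/z}=z+p_1\log y_1+p_2\log y_2+O(z^{-1})$. For $(k,l)\neq(0,0)$, write the summand as $R_{k,l}(z)\,y_1^{k+p_1/z}y_2^{l+p_2/z}$ and note that the infinite products in $R_{k,l}$ telescope to genuine finite products. A naive degree count gives $R_{k,l}$ a $z$-degree of $(3l-k)+(2k-l)-(2l+k)=0$, but each of the two classes $p_1-3p_2$ and $\lambda+p_2-2p_1$, whenever it occurs as the $m=0$ factor of a telescoped numerator product, lowers this by one. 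Thus $R_{k,l}$ has $z$-degree $-1$ exactly when precisely one of $3l-k\geq1$, $2k-l\geq1$ holds, $z$-degree $-2$ when both hold, and $z$-degree $0$ only for $(k,l)=(0,0)$. After multiplying by $z$ and by $y_i^{p_i/z}=1+O(z^{-1})$, the summands that contribute to the $z^0$ part are therefore exactly those with precisely one of the two inequalities valid, i.e.\ those in the two families $\{\,l\geq1,\ 0\leq k\leq l/2\,\}$ and $\{\,k\geq1,\ 0\leq l\leq k/3\,\}$. Reading off the leading coefficient of $z\,R_{k,l}$ in each family --- the $m=0$ factor contributes $p_1-3p_2$ (resp.\ $\lambda+p_2-2p_1$), the remaining linear factors contribute a sign $(-1)^{3l-k-1}$ (resp.\ $(-1)^{2k-l-1}$) and a factorial, and the telescoped and denominator products produce $(l-2k)!\,(l!)^2 k!$ (resp.\ $(k-3l)!\,(l!)^2 k!$) --- one finds that the first family contributes $-f(y_1,y_2)\,(p_1-3p_2)$ and the second contributes $-g(y_1,y_2)\,(\lambda+p_2-2p_1)$, with $f$ and $g$ precisely the power series in the statement. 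Hence
\[
  I_Y(y_1,y_2,z)=z+p_1\log y_1+p_2\log y_2-f(y_1,y_2)(p_1-3p_2)-g(y_1,y_2)(\lambda+p_2-2p_1)+O(z^{-1}),
\]
and everything past the leading $z$ is a degree-$2$ class, as it must be.

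To finish, fix $(y_1,y_2)$ in the domain of convergence, so $g(y_1,y_2)\in\CC$. By Proposition~\ref{thm:KP113resolution} and Proposition~\ref{thm:stuff}(a), $(y_1,y_2)\mapsto e^{-\lambda g(y_1,y_2)/z}\,I_Y(y_1,y_2,-z)$ is a family of elements of $\cLY$. Multiplying the expansion above (with $z$ replaced by $-z$) by $e^{-\lambda g/z}$ cancels the $\lambda$-term, giving
\[
  e^{-\lambda g/z}\,I_Y(y_1,y_2,-z)=-z+p_1(\log y_1+2g-f)+p_2(\log y_2+3f-g)+O(z^{-1})=-z+p_1\log q_1+p_2\log q_2+O(z^{-1}),
\]
where $q_1=y_1\exp(2g-f)$ and $q_2=y_2\exp(3f-g)$. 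The unique family of elements of $\cLY$ of this shape is $\tau\mapsto\JJ_Y(\tau,-z)$ evaluated at $\tau=p_1\log q_1+p_2\log q_2$, and by the Divisor Equation this equals $J_Y(q_1,q_2,-z)$. Replacing $z$ by $-z$ yields $J_Y(q_1,q_2,z)=e^{\lambda g/z}\,I_Y(y_1,y_2,z)$, as claimed; since $f$ and $g$ vanish at the origin, the change of variables $(y_1,y_2)\rightsquigarrow(q_1,q_2)$ is an analytic transformation, invertible near $0$.

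The step I expect to be the main obstacle is the middle paragraph: keeping the $z$-degree bookkeeping for $R_{k,l}$ straight, correctly isolating the two boundary families of indices, and verifying the combinatorial identities that match the leading coefficients to the explicit series $f$ and $g$ --- in particular getting the signs $(-1)^{3l-k}$ versus $(-1)^{3l-k-1}$ right and confirming that the factorials from the telescoping products are exactly $(l!)^2 k!\,(l-2k)!$ and $(l!)^2 k!\,(k-3l)!$. Everything else is a routine transcription of the Example~I argument.
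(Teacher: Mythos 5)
Your proof is correct and follows exactly the route the paper takes --- expand $I_Y$ modulo $z^{-1}$, twist by $e^{-\lambda g/z}$ to remove the $\lambda$-component, and invoke uniqueness of the family of elements of $\cLY$ of the form $-z+\tau+O(z^{-1})$ together with the Divisor Equation. The paper simply asserts the expansion $I_Y = z + p_1[\log y_1 - f + 2g] + p_2[\log y_2 + 3f - g] - \lambda g + O(z^{-1})$ with the remark ``argue exactly as in Corollary~\ref{cor:KP2mirror}''; you have correctly carried out the degree bookkeeping, the identification of the two boundary index families, and the matching of telescoped-product coefficients to the series $f$ and $g$ that the paper leaves implicit.
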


\begin{proof}
  We argue exactly as in Corollary~\ref{cor:KP2mirror}.  Note that
  \[
  I_Y(y_1,y_2,z) = z + p_1 \big[\log y_1 - f(y_1,y_2) + 2 g(y_1,y_2)\big]
  + 
  p_2 \big[\log y_2 + 3 f(y_1,y_2) -g(y_1,y_2)\big] - \lambda g(y_1,y_2) + O(z^{-1}).
  \]
  It follows from Propositions~\ref{thm:stuff} and~\ref{thm:KP113resolution}
  that
  \[
  y \mapsto e^{{-\lambda} g(y_1,y_2)/z} I_Y(y_1,y_2,-z)
  \]
  is a family of elements of $\cLY$.  But
  \[
  e^{-\lambda g(y_1,y_2)/z} I_Y(y_1,y_2,-z) = -z + p_1 \log q_1 + p_2
  \log q_2 + O(z^{-1}),
  \]
  where $q_1$ and $q_2$ are defined above, and the unique family of
  elements of $\cLY$ of this form is $(q_1,q_2) \mapsto
  J_Y(q_1,q_2,-z)$.
\end{proof}

It follows, as before, that the series defining $J_Y(q_1,q_2,z)$
converges (to a multivalued analytic function) when $|q_1|$ and
$|q_2|$ are sufficiently small. Proposition~\ref{thm:stuff}b implies
that $\cLY$ is uniquely determined by the fact that $(y_1,y_2) \mapsto
I_Y(y_1,y_2,-z)$ is a family of elements of $\cLY$.

\subsection*{Aside: Computing Gromov--Witten Invariants of $Y$}

As in the previous example, one can invert the change of variables
$(y_1,y_2) \rightsquigarrow (q_1,q_2)$ and read off genus-zero
Gromov--Witten invariants of $Y$.  We will not do this.

\subsection*{Step 3: A Family of Elements of $\cLX$}

Let
\begin{multline}
  \label{eq:IKP113Gamma}
  I_\cX(x_1,x_2,z) := z \, 
  \sum_{\substack{d: d \geq 0,\\
      3d\in \ZZ}}
  \sum_{\substack{e: e \geq 0,\\
      3e\in \ZZ}}
  {x_1^{3d + 3p/z} x_2^{3e} \over (3e)! \, z^{3e} }
  { \prod_{\substack{b:b \leq 0 \\ \fr(b) = \fr(d-e)}} (p+b z)^2
    \over 
    \prod_{\substack{b:b \leq d-e \\ \fr(b) = \fr(d-e)}} (p+b z)^2}
  \times \\
  { \prod_{\substack{b:-5d-e<b\leq 0 \\ \fr(b) = \fr(-5d-e)}} (\lambda
    - 5p+b z)
    \over 
    \prod_{1 \leq m \leq 3 d} (3p+mz)}
  \fun_{\fr(e-d)}.
\end{multline} 
This converges, in some open set $\{(x_1,x_2) \in \Cstar \times \CC :
\text{$|x_1|$ and $|x_2|$ are sufficiently small}\}$, to a multivalued
analytic function which takes values in $\cHX$. 

\begin{proposition} \label{pro:KP113} 
  \begin{align*}
    I_\cX(x_1,x_2,-z) \in \cLX && \text{for all $(x_1,x_2)$ in the
      domain of convergence of $I_\cX$.}
  \end{align*}
\end{proposition}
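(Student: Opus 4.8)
The plan is to follow the proof of Theorem~\ref{thm:smalllinebundle} essentially verbatim, with the one adjustment that this section forces on us: $\cX = K_{\PP(1,1,3)}$ is the total space of the concave line bundle $\cE = \cO(-5)$ over $\cB = \PP(1,1,3)$, and $H^\bullet_{\text{CR}}(\cX;\CC)$ is generated not by $H^2(\cB;\CC)$ but by the two degree-two classes $p$ and $\fun_{1/3}$, the second of which is twisted; so we must restrict the big $J$-function of $\cB$ to the two-dimensional locus $\tau = \tau^1 p + \tau^3 \fun_{1/3}$ rather than to $H^2(\cB;\CC)$. As explained in Sections~\ref{sec:statement} and~\ref{sec:theory}, Gromov--Witten invariants of $\cX$ coincide with Gromov--Witten invariants of $\cB$ twisted by $(\be^{-1},\cE)$, so the twisted Lagrangian cone $\cL^\tw$ of \cite{CCIT:computing}*{Theorem~4.6} equals $\cLX$. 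Decomposing $\JJ_\cB(\tau,z) = \sum_{\theta \in \NETT(\cB)} J_\theta(\tau,z)$ over topological types and setting $I^\tw(\tau,z) = \sum_\theta M_\theta(z) J_\theta(\tau,z)$, with $M_\theta(z)$ the modification factor of \cite{CCIT:computing}*{\S4.2} for the twisting data $(\be^{-1},\cO(-5))$, produces a formal family $\tau \mapsto I^\tw(\tau,-z)$ of elements of $\cLX$. The task is then to identify this family, restricted to $\tau \in \langle p, \fun_{1/3}\rangle$, with $I_\cX$ as in \eqref{eq:IKP113Gamma} after the change of variables $x_1 = e^{\tau^1}$ together with the analogous bookkeeping of the twisted insertions by $x_2$.

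First I would identify the relevant contributions. For $\tau = \tau^1 p + \tau^3 \fun_{1/3}$ the summand $J_\theta(\tau,z)$ vanishes unless $\theta = (0,d,S)$ with every marked point except the distinguished one decorated by the untwisted sector (carrying $p$) or by the $1/3$-twisted sector (carrying $\fun_{1/3}$), and the distinguished point in an arbitrary sector. The restriction of the big $J$-function of $\PP(1,1,3)$ to this locus is a routine computation in toric mirror symmetry: it follows from Givental's work on toric targets and the mirror theorem for weighted projective spaces \citelist{\cite{Givental:toric}\cite{CCIT:stacks}}, and since the only twisted contributions come from stable maps meeting the single $\ZZ_3$-point of $\PP(1,1,3)$ it can also be checked directly. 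Written in the variables $x_1 = e^{\tau^1}$ and $x_2$, the sum of these $J_\theta$ is exactly \eqref{eq:IKP113Gamma} with the factor
\[
\prod_{\substack{b\,:\;-5d-e<b\leq 0\\ \fr(b)=\fr(-5d-e)}}(\lambda - 5p + bz)
\]
omitted. Then I would compute the modification factors: for $\cE = \cO(-5)$ one has $c_1(\cE) = -5p$, and --- just as the factors $M_\cE(d)$ in Theorems~\ref{thm:smalllinebundle} and~\ref{thm:BZn} are read off from the first Chern class of the twisting bundle together with the degree and the ages at the orbifold markings --- the factor $M_\theta(z)$ attached to a topological type with degree $d$ and twisted-insertion index $e$ depends only on $d$ and $e$ and equals precisely the product displayed above. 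Multiplying $J_\theta$ by $M_\theta$ and summing reproduces \eqref{eq:IKP113Gamma} exactly, so $I_\cX(x_1,x_2,-z) \in \cLX$; convergence on the stated open set is immediate from the $\Gamma$-function form of \eqref{eq:IKP113Gamma}. An alternative route, which sidesteps the twisted-theory bookkeeping altogether, is to apply the mirror theorem for toric Deligne--Mumford stacks \cite{CCIT:stacks} directly to the GIT presentation $\cX = \CC^5 \GIT{\xi} (\Cstar)^2$ coming from chamber~II of the secondary fan: its toric $I$-function is a sum over the rank-two semigroup of curve classes, and is precisely \eqref{eq:IKP113Gamma}.

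I expect the main obstacle to be purely combinatorial: matching the products $\prod_{b\leq\cdots,\,\fr(b)=\fr(\cdots)}(\ast+bz)$ in \eqref{eq:IKP113Gamma} with, on one side, the toric formula for $\JJ_{\PP(1,1,3)}$ restricted to $\langle p,\fun_{1/3}\rangle$ and, on the other, the $\cO(-5)$ modification factor --- in particular getting the age shifts, the fractional-part conditions, and the interplay between the honest degree $d$ and the twisted-insertion index $e$ exactly right. None of this is conceptually deep, but it is where all the care is needed; once the identification of $I^\tw$ with \eqref{eq:IKP113Gamma} is in place, membership in $\cLX$ follows at once from \cite{CCIT:computing}*{Theorem~4.6}.
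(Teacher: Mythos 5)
Your main route diverges from the paper's proof in a way that leaves real gaps, and these gaps are precisely the ones the paper's argument is designed to sidestep. The paper does \emph{not} attempt to apply the Theorem~\ref{thm:smalllinebundle}-style argument to the full two-parameter locus $\tau = \tau^1 p + \tau^3 \fun_{1/3}$. Instead it first establishes the one-parameter slice $I_\cX(x,0,-z) \in \cLX$ exactly as in Proposition~\ref{thm:KP2mirror}, using Theorem~\ref{thm:smalllinebundle} together with the small $J$-function of $\PP(1,1,3)$ from \cite{CCLT}*{Theorem~1.7}, and then deduces the full two-parameter statement from Iritani's Reconstruction Theorem \cite{Iritani:gen}*{\emph{cf.} Example 4.15}. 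That two-step structure is not an accident: Theorem~\ref{thm:smalllinebundle} as stated and proved in the paper only covers topological types $(0,d,S)$ with $S=(0,\ldots,0,i)$, because its key simplification ``$M_\theta(z)$ depends only on $d$'' hinges on all non-distinguished markings lying on the distinguished component of $\cI\cB$. Once you allow markings carrying $\fun_{1/3}$, that justification vanishes, and your assertion that ``$M_\theta(z)$ depends only on $d$ and $e$ and equals precisely the product displayed'' is exactly the statement that would need to be proved --- it does not follow from anything in the paper. You flag this as ``purely combinatorial,'' but it is the heart of the matter and is left undone.

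There is a second unacknowledged gap of the same kind: you describe the restriction of $\JJ_{\PP(1,1,3)}$ to $\langle p,\fun_{1/3}\rangle$ as ``a routine computation in toric mirror symmetry,'' but \cite{CCLT}*{Theorem~1.7} gives only the \emph{small} $J$-function, i.e.\ the restriction to $H^2(\PP(1,1,3))=\langle p\rangle$. Extending this to the twisted degree-two class $\fun_{1/3}$ is precisely the content of a reconstruction result (Iritani's, or the Proposition~\ref{thm:stuff}(c) flavour), and quoting it would essentially recover the paper's actual argument; doing it by hand is nontrivial and would also need to be carried out. Your alternative route via the mirror theorem for toric Deligne--Mumford stacks would indeed give \eqref{eq:IKP113Gamma} directly, but the paper explicitly treats \cite{CCIT:stacks} as not yet available (see the proof of Proposition~\ref{thm:KP113resolution}), so it cannot serve as the intended justification here. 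In short: the conclusion you aim for is right, but to make your direct approach rigorous you would need to (i) compute the modification factors for topological types with twisted markings and (ii) derive the restriction of the big $J$-function of $\PP(1,1,3)$ to $\langle p,\fun_{1/3}\rangle$; the paper avoids both by reducing to the untwisted one-parameter slice and then invoking Iritani's Reconstruction Theorem, and that is the missing ingredient in your proposal.
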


\begin{proof}
  We first show that
  \begin{align}
    \label{eq:intermediate}
    I_\cX(x,0,-z) \in \cLX && \text{for all $(x,0)$ in the
      domain of convergence of $I_\cX$.}
  \end{align}
  For this we argue exactly as in Proposition~\ref{thm:KP2mirror},
  combining Theorem~\ref{thm:smalllinebundle} with
  \cite{CCLT}*{Theorem~1.7}.  Theorem~\ref{thm:smalllinebundle} here
  tells us how to modify the small $J$-function of $\PP(1,1,3)$ and
  Theorem~1.7 in \cite{CCLT} tells us how to compute the small
  $J$-function of $\PP(1,1,3)$.  Proposition~\ref{pro:KP113} then
  follows from \eqref{eq:intermediate} and Iritani's Reconstruction
  Theorem \cite{Iritani:gen}*{\emph{cf.} Example 4.15}.
\end{proof}

\subsection*{Step 4: $I_\cX$ Determines $\cLX$}

We have:

\begin{cor} \label{cor:KP113mirror}
    \[
    \JJ_\cX(\tau,z)\big|_{\tau = p \log q + r \fun_{1/3}}
    = e^{\lambda g(x_1,x_2)/z} I_\cX(x_1,x_2,z) 
    \]
    where
    \begin{align*} 
      & q = x_1^3 \exp\big(5 g(x_1,x_2)\big), &&
      g(x_1,x_2) = \sum_{\substack{0 \leq e \leq d; \\ 3d, 3e \in \ZZ; \\
          \fr(d) = \fr(e);\\(d,e) \ne (0,0)}}(-1)^{d+e} 
      {(5d+e-1)! \, x_1^{3d} x_2^{3e}
        \over 
        ((d-e)!)^2\,(3d)!\,(3e)!}, \\
      &  r = h(x_1,x_2) &
      & h(x_1,x_2) = \sum_{\substack{d,e\geq 0; \\ 3d, 3e \in \ZZ; \\
          \fr(e-d)=1/3}}
      { \Gamma\big({2 \over 3}\big)^3 x_1^{3d} x_2^{3e}
        \over 
        \Gamma(1+d-e)^2 \Gamma(1-5d-e)\,(3d)!\,(3e)!}.
    \end{align*}
\end{cor}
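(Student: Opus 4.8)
The plan is to follow the pattern of Corollaries~\ref{cor:C3Z3mirror} and~\ref{cor:resolutionmirror}. We already know from Proposition~\ref{pro:KP113} that $(x_1,x_2)\mapsto I_\cX(x_1,x_2,-z)$ is a family of elements of $\cLX$, and that $\JJ_\cX(\tau,-z)$ is characterized as the \emph{unique} family of elements of $\cLX$ of the form $-z+\tau+O(z^{-1})$; since the Chen--Ruan cohomology algebra of $\cX = K_{\PP(1,1,3)}$ is generated by $p$ and $\fun_{1/3}$, the locus $\tau = p\log q + r\,\fun_{1/3}$ is the ``generators locus'' relevant to Proposition~\ref{thm:stuff}(c). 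So it suffices to bring $I_\cX$ into the normal form $-z+\tau+O(z^{-1})$ by an allowable operation and to read off $\tau$.

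First I would expand $I_\cX(x_1,x_2,z)$ from \eqref{eq:IKP113Gamma} in powers of $z^{-1}$ and isolate the $z^{1}$- and $z^{0}$-coefficients. The $(d,e)=(0,0)$ summand gives $z\,x_1^{3p/z}\,\fun_0 = z + p\log(x_1^{3})\,\fun_0 + O(z^{-1})$. In every other summand all but finitely many factors of the infinite products $\prod_{b\leq 0}(p+bz)^2$ and $\prod_{b\leq 0}(\lambda-5p+bz)$ cancel against the denominator factors, leaving a rational function of $z$; one then checks that each such summand is $O(z^{0})$ and collects the $z^{0}$-parts. The claim to verify is that they sum to $(5p-\lambda)\,g(x_1,x_2)\,\fun_0 + h(x_1,x_2)\,\fun_{1/3}$ with $g$ and $h$ as in the statement — in particular that the $\fun_{2/3}$-component, and any higher-degree contribution such as a $p^{2}$-term, appears only at order $O(z^{-1})$. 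Since $p\log(x_1^{3}) + 5p\,g = p\log q$ for $q = x_1^{3}\exp(5g)$, this yields
\[
I_\cX(x_1,x_2,z) = z + p\log q + h(x_1,x_2)\,\fun_{1/3} - \lambda\,g(x_1,x_2) + O(z^{-1}).
\]

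Next, Proposition~\ref{thm:stuff}(a) tells us that $\cLX$ is closed under multiplication by $\exp(a\lambda/z)$ for any $a\in\CC$; applying this with $a = -g(x_1,x_2)$ shows that $(x_1,x_2)\mapsto e^{-\lambda g(x_1,x_2)/z}\,I_\cX(x_1,x_2,-z)$ is again a family of elements of $\cLX$, and by the expansion above it has the form
\[
e^{-\lambda g(x_1,x_2)/z}\,I_\cX(x_1,x_2,-z) = -z + \bigl(p\log q + h(x_1,x_2)\,\fun_{1/3}\bigr) + O(z^{-1}).
\]
The uniqueness statement recalled above then identifies this family with $\JJ_\cX\bigl(p\log q + h(x_1,x_2)\,\fun_{1/3},\,-z\bigr)$; setting $r = h(x_1,x_2)$ and replacing $z$ by $-z$ gives the corollary. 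Exactly as in the earlier examples, since $e^{\lambda g/z}I_\cX(x_1,x_2,z)$ is multivalued analytic and the changes of variables $q = x_1^{3}\exp(5g)$, $r = h$ are analytic, it also follows that $\JJ_\cX(\tau,z)$ restricted to $\tau = p\log q + r\,\fun_{1/3}$ depends analytically on $(x_1,x_2)$ near the origin.

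The main obstacle is the first step: the bookkeeping needed to pin down the $z^{0}$-coefficients of \eqref{eq:IKP113Gamma}. One has to track the fractional-part conditions $\fr(b)=\fr(d-e)$ and $\fr(b)=\fr(-5d-e)$ carefully, verify the telescoping of the infinite products in each summand, count the surviving factors correctly in $z$, and confirm that the $z^{0}$-parts reassemble precisely into $p\log q - \lambda g + h\,\fun_{1/3}$ with the combinatorial coefficients $\tfrac{(5d+e-1)!}{((d-e)!)^2(3d)!(3e)!}$ for $g$ and $\tfrac{\Gamma(2/3)^3}{\Gamma(1+d-e)^2\,\Gamma(1-5d-e)\,(3d)!\,(3e)!}$ for $h$ claimed in the statement. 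Once this computation is carried out, everything else is formal and mirrors the proofs of Corollaries~\ref{cor:C3Z3mirror} and~\ref{cor:resolutionmirror}.
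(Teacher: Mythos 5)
Your proposal is correct and is exactly the argument the paper intends: the paper's proof is the single line ``Argue exactly as in Corollary~\ref{cor:KP2mirror},'' and you have faithfully unpacked that argument — expand $I_\cX$ in powers of $z^{-1}$, multiply by $e^{-\lambda g/z}$ using Proposition~\ref{thm:stuff}(a) to stay on $\cLX$, and invoke the uniqueness of the family $-z+\tau+O(z^{-1})$ to identify the result with the restricted big $J$-function. You are also right that the only real work is the bookkeeping of the $z^1$ and $z^0$ coefficients of \eqref{eq:IKP113Gamma}, which the paper leaves to the reader.
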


\begin{proof}
  Argue exactly as in Corollary~\ref{cor:KP2mirror}.
\end{proof}

This implies that the series $\JJ_\cX(\tau,z)\big|_{\tau = p \log q +
  r \fun_{1/3}}$ converges, in a region where $|q|$ and $|r|$ are
sufficiently small, to a multivalued analytic function of $q$ and $r$
which takes values in $\cHX$.  It also implies, via
Proposition~\ref{thm:stuff}b, that $\cLX$ is uniquely determined by
the fact that $(x_1,x_2) \mapsto I_\cX(x_1,x_2,-z)$ is a family of
elements of $\cLX$.

\subsection*{Aside: Computing Gromov--Witten Invariants of $\cX$}

We can use Corollary~\ref{cor:KP113mirror} to calculate genus-zero
Gromov--Witten invariants of $\cX$, by computing the first few terms
of the power series inverse to the ``mirror map'' $(x_1,x_2) \mapsto
(q,r)$.  This gives:
\begin{equation}
  \label{eq:KP113invariants}
  \begin{aligned}
    &\correlator{\fun_{1/3}}^\cX_{0,1,1/3} = -2, \\
    &\correlator{\fun_{1/3}}^\cX_{0,1,4/3} = {3757 \over 648}, \\
    &  \correlator{\fun_{1/3},\fun_{1/3}}^\cX_{0,1,2/3} = {-{13 \over
        18}}, \\
    &\correlator{\fun_{1/3},\fun_{1/3},\fun_{1/3}}^\cX_{0,1,0} = {1
      \over 3}, \\
    &\correlator{\fun_{1/3},\fun_{1/3},\fun_{1/3},\fun_{1/3}}^\cX_{0,1,1/3}
    = {-{2 \over 27}},\\
  \end{aligned}
\end{equation}
and so on.

\subsection*{Step 5: The $B$-model Moduli Space and the Picard--Fuchs
  System}

The $B$-model moduli space $\cM_B$ here is the toric orbifold
corresponding to the secondary fan for $Y$
(Figure~\ref{fig:KP113resolutionsecondaryfan}).  Each chamber of the
secondary fan gives a co-ordinate patch on $\cM_B$: the co-ordinates
$(y_1,y_2)$ coming from chamber~I are dual respectively to $p_1$ and
$p_2$, and the co-ordinates $(x_1,x_2)$ from chamber~II are dual
respectively to $p_1$ and $p_1 - 3 p_2$.  These co-ordinate patches
are related by
\begin{equation}
  \label{eq:KP113resolutiongluing}
  \begin{aligned}
    y_1 &= x_1 x_2 & \qquad \qquad \qquad x_1 &= y_1 y_2^{1/3} \\
    y_2 &= x_2^{-3} & \qquad \qquad \qquad x_2 &= y_2^{-1/3}.
  \end{aligned}
\end{equation}
We regard $I_Y(y_1,y_2,z)$ as a function on the co-ordinate patch
corresponding to chamber I and $I_\cX(x_1,x_2,z)$ as a function on the
co-ordinate patch corresponding to chamber II.  Let
\begin{align*}
  \textstyle D_{x_1} = z x_1 {\partial \over \partial x_1}, &&
  \textstyle  D_{x_2} = z x_2 {\partial \over \partial x_2}, &&
  \textstyle  D_{y_1} = z y_1 {\partial \over \partial y_1}, &&
  \textstyle  D_{y_2} = z y_2 {\partial \over \partial y_2}.
\end{align*}

The components of $I_Y(y_1,y_2,z)$, with respect to the basis
$\{\varphi_\alpha\}$, form a basis of solutions to the system of
differential equations:
\begin{equation}
  \label{eq:PFresolution}
  \begin{aligned}
    & D_{y_1}(D_{y_1} - 3D_{y_2}) f = 
    y_1 (\lambda + D_{y_2} - 2 D_{y_1})(\lambda + D_{y_2} - 2D_{y_1} - z) f \\
    & D_{y_2}^2(\lambda + D_{y_2} - 2D_{y_1}) f = 
    y_2 (D_{y_1} - 3D_{y_2})(D_{y_1} - 3D_{y_2} - z)(D_{y_1}-3D_{y_2}-2z)f.
  \end{aligned}
\end{equation}
and the components of $I_\cX(x_1,x_2,z)$, with respect to the basis
$\{\phi_\alpha\}$, form a basis of solutions to the system of
differential equations:
\begin{equation}
  \label{eq:PFKP113}
  \begin{aligned}
    & D_{x_2}(D_{x_2} - z)(D_{x_2} - 2z) f = \textstyle
    x_2^3 ({1 \over 3}D_{x_1} - {1 \over 3}D_{x_2})^2
    (\lambda - {5 \over 3}D_{x_1} - {1 \over 3}D_{x_2}) f \\
    & D_{x_1}D_{x_2}f = \textstyle
    x_1 x_2 (\lambda - {5 \over 3}D_{x_1} - {1 \over 3}D_{x_2}) 
    (\lambda - {5 \over 3}D_{x_1} - {1 \over 3}D_{x_2}-z) f \\
    & D_{x_1}(D_{x_1} - z)(D_{x_1} - 2z)  \textstyle
    ({1 \over 3}D_{x_1} - {1 \over 3}D_{x_2})^2 f =
    x_1^3 \prod_{k=0}^{k=4}  
    (\lambda - {5 \over 3}D_{x_1} - {1 \over 3}D_{x_2}-k z) f.
  \end{aligned}
\end{equation}
The change of variables \eqref{eq:KP113resolutiongluing} turns the
system of differential equations \eqref{eq:PFresolution} into the
system of differential equations \eqref{eq:PFKP113}.  It follows that
if $\widetilde{I}_Y$ is the analytic continuation of $I_Y$ to a
neighbourhood of $(x_1,x_2) = (0,0)$ then there exists a
$\CC(\!(z^{-1})\!)$-linear map $\U:\cHX \to \cHY$ such that
$\U(I_\cX(x_1,x_2,-z)) = \widetilde{I}_Y(x_1,x_2,-z)$.  As before, the
map $\U$ is the linear symplectomorphism that we seek.  To determine
it, we first calculate the analytic continuation $\widetilde{I}_Y$.

\subsection*{Step 6: Analytic Continuation}

To calculate $\widetilde{I}_Y$ we, for each $k \geq 0$, extract the
coefficient of $y_1^k$ from \eqref{eq:resolutionIGamma} and then analytically
continue it to a region where $|y_2|$ is large, using the
Mellin--Barnes method described in Section~\ref{sec:C3Z3}.  The result
is:
\begin{multline} \label{eq:KP113resolutionac}
    \widetilde{I}_Y(x_1,x_2,z) = 
    z \sum_{k,n \geq 0}
    {(-1)^{n+k} \over 3.n!}
    { \sin \big( \pi \big[ {p_1 -3 p_2 \over z} \big] \big)
      \over 
      \sin \big( \pi \big[ {p_1 -3 p_2 \over 3z}
      + {k-n \over 3} \big] \big)}
    { \Gamma \big(1 + {p_2 \over z} \big)^2
      \over 
      \Gamma \big(1 + {p_1 \over 3 z} + {k-n \over 3} \big)^2}
    \times \\
    { \Gamma \big(1 + {p_1 \over z} \big)
      \over 
      \Gamma \big(1 + {p_1 \over z} + k \big)}
    \Gamma \big(1 + \textstyle{p_1 - 3 p_2 \over z}\big)
    { \Gamma \big(1 + {\lambda - 2p_1 +p_2 \over z} \big)
      \over 
      \Gamma \big(1 + {3 \lambda - 5 p_1\over 3z} - {5 k+n \over 3} \big)}
    x_1^{k+{p_1 \over z}} x_2^n.
  \end{multline}

\subsection*{Step 7: Compute the Symplectic Transformation} 

Since $\U(I_\cX(x_1,x_2,-z)) = \widetilde{I}_Y(x_1,x_2,-z)$, we can
read off the transformation $\U$ by comparing coefficients of $x_1^a
x_2^b (\log x_1)^c$ in \eqref{eq:IKP113Gamma} and
\eqref{eq:KP113resolutionac}.  This gives:
\begin{align*}
  & \U(\fun_0) = 
  { \sin \big( \pi \big[ {p_1 -3 p_2 \over z} \big] \big)
      \over 
      3 \sin \big( \pi \big[ {p_1 -3 p_2 \over 3z}
      \big] \big)}
    { \Gamma \big(1 - {p_2 \over z} \big)^2
      \over 
      \Gamma \big(1 - {p_1 \over 3 z}\big)^2}
    { \Gamma \big(1 - {p_1 - 3 p_2 \over z}\big)
      \Gamma \big(1 - {\lambda - 2p_1 +p_2 \over z} \big)
      \over 
      \Gamma \big(1 - {3 \lambda - 5 p_1\over 3z} 
      \big)} \\
    & \U(p) = 
    { p_1 \over 3} \U(\fun_0)
    \\
    & \U(p^2) = 
    {p_1^2 \over 9} \U (\fun_0)
    \\
    & \U(\fun_{1/3}) = 
    {-{ z \sin \big( \pi \big[ {p_1 -3 p_2 \over z} \big] \big)
      \over 
      3 \sin \big( \pi \big[ {p_1 -3 p_2 \over 3z}
      +{1 \over 3} \big] \big)}}
    { \Gamma \big(1 - {p_2 \over z} \big)^2
      \over 
      \Gamma \big(1 - {p_1 \over 3 z} - {1 \over 3} \big)^2}
    { \Gamma \big(1 - {p_1 - 3 p_2 \over z}\big)
      \Gamma \big(1 - {\lambda - 2p_1 +p_2 \over z} \big)
      \over 
      \Gamma \big(1 - {3 \lambda - 5 p_1\over 3z} - {1 \over 3}
      \big)} \\
    & \U(\fun_{2/3}) = 
    { z^2 \sin \big( \pi \big[ {p_1 -3 p_2 \over z} \big] \big)
      \over 
      3 \sin \big( \pi \big[ {p_1 -3 p_2 \over 3z}
      + {2 \over 3} \big] \big)}
    { \Gamma \big(1 - {p_2 \over z} \big)^2
      \over 
      \Gamma \big(1 - {p_1 \over 3 z} - {2 \over 3} \big)^2}
    { \Gamma \big(1 - \textstyle{p_1 - 3 p_2 \over z}\big)
      \Gamma \big(1 - {\lambda - 2p_1 +p_2 \over z} \big)
      \over 
      \Gamma \big(1 - {3 \lambda - 5 p_1\over 3z} - {2 \over 3} \big)}
\end{align*}

The non-equivariant limit $\lim_{\lambda \to 0} \U$ has matrix:
\begin{equation}
  \label{eq:KP113nonequivariantU}
  \begin{pmatrix}
    1 & 0 & 0 & 0 & 0 \\
    0 & \frac{1}{3} & 0 & \frac{2 \pi }{3 \sqrt{3} \Gamma \left(\frac{2}{3}\right)^3} & -\frac{2 \pi  z}{3 \sqrt{3} \Gamma \left(\frac{1}{3}\right)^3} \\
    0 & 0 & 0 & -\frac{2 \pi }{\sqrt{3} \Gamma \left(\frac{2}{3}\right)^3} & \frac{2 \pi  z}{\sqrt{3} \Gamma \left(\frac{1}{3}\right)^3} \\
    \frac{\pi ^2}{9 z^2} & 0 & \frac{1}{3} & \frac{2 \pi ^2}{9 z \Gamma \left(\frac{2}{3}\right)^3} & \frac{2 \pi ^2}{9 \Gamma \left(\frac{1}{3}\right)^3} \\
    -\frac{\pi ^2}{3 z^2} & 0 & 0 & -\frac{2 \pi ^2}{3 z \Gamma \left(\frac{2}{3}\right)^3} & -\frac{2 \pi ^2}{3 \Gamma \left(\frac{1}{3}\right)^3}
  \end{pmatrix}.
\end{equation}

\begin{thm}[The Crepant Resolution Conjecture for
  \protect{$K_{\PP(1,1,3)}$}] \label{thm:KP113CRC} Conjecture~\ref{CRC}
  holds for $\cX = K_{\PP(1,1,3)}$ and $Y$ its crepant resolution.
\end{thm}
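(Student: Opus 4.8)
The plan is to run the argument that proved Theorem~\ref{thm:C3Z3CRC} essentially word for word, with the hypergeometric families $I_\cX$ and $I_Y$ of Steps~1 and~3 playing the roles that they played in Example~I. All of the hard analytic work has already been done in Steps~1--7: Propositions~\ref{thm:KP113resolution} and~\ref{pro:KP113} show that $(y_1,y_2)\mapsto I_Y(y_1,y_2,-z)$ is a family of elements of $\cLY$ and that $(x_1,x_2)\mapsto I_\cX(x_1,x_2,-z)$ is a family of elements of $\cLX$; Corollaries~\ref{cor:resolutionmirror} and~\ref{cor:KP113mirror}, combined with Proposition~\ref{thm:stuff}(b), show that $\cLY$ is uniquely determined by the $I_Y$-family and $\cLX$ by the $I_\cX$-family; Step~5 extracts from the coincidence of the Picard--Fuchs systems \eqref{eq:PFresolution} and \eqref{eq:PFKP113} under the change of variables \eqref{eq:KP113resolutiongluing} a $\CC(\!(z^{-1})\!)$-linear isomorphism $\U\colon\cHX\to\cHY$ with $\U\bigl(I_\cX(x_1,x_2,-z)\bigr)=\widetilde{I}_Y(x_1,x_2,-z)$; and Steps~6 and~7 compute $\widetilde{I}_Y$ and $\U$ explicitly.

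Granting this, I would finish as follows. Because $\U$ sends the $\cLX$-defining family $(x_1,x_2)\mapsto I_\cX(x_1,x_2,-z)$ onto the family $(x_1,x_2)\mapsto\widetilde{I}_Y(x_1,x_2,-z)$, which is the analytic continuation of the $\cLY$-defining family $(y_1,y_2)\mapsto I_Y(y_1,y_2,-z)$, and because $\cLX$ and $\cLY$ are the unique Lagrangian submanifold-germs containing these respective families (Proposition~\ref{thm:stuff}(b)), linearity of $\U$ forces $\U(\cLX)$ to equal the analytic continuation of $\cLY$ along the path fixed in Step~6; this is precisely the assertion $\U(\cLX)=\cLY$ of Conjecture~\ref{CRC}. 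It then remains to verify conditions (a)--(c) and that $\U$ is symplectic and degree-preserving. Degree-preservation (with $\deg z=\deg\lambda=\deg p_i=2$) and $\CC(\!(z^{-1})\!)$-linearity are immediate from the construction and from the homogeneity of the formulas for $\U(\fun_0),\U(p),\U(p^2),\U(\fun_{1/3}),\U(\fun_{2/3})$ in Step~7. Condition~(a), $\U(\fun_\cX)=\fun_Y+O(z^{-1})$, and condition~(c), $\U(\cHX^+)\oplus\cHY^-=\cHY$, are read off the leading $z$-behaviour of those same formulas, with the non-equivariant matrix \eqref{eq:KP113nonequivariantU} making the pattern transparent. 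Condition~(b), for the generator $p$ of $H^2(\cX;\CC)$, follows from the relations $\U(p)=\tfrac{p_1}{3}\U(\fun_0)$ and $\U(p^2)=\tfrac{p_1^2}{9}\U(\fun_0)$ together with the vanishing of the Chen--Ruan products $p\CR\fun_{1/3}$ and $p\CR\fun_{2/3}$ on the twisted sectors, which jointly encode $\U\circ(p\,\CR)=(\pi^\star p\,\cup)\circ\U$ with $\pi^\star p=p_1/3$ (using $p_1(p_1-3p_2)=0$ and $p_1^2p_2=0$ in $H(Y)$).

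The step I expect to cause the most trouble is, as the paper itself flags, checking that $\U$ is a symplectomorphism: nothing in the Mellin--Barnes construction makes this manifest, so one is reduced to a direct computation with the explicit matrix of $\U(-z)$, verifying $\Omega_Y(\U f,\U g)=\Omega_\cX(f,g)$ on the Darboux basis --- equivalently, that $\U(-z)$ intertwines the pairings $(\cdot,\cdot)_\cX$ and $(\cdot,\cdot)_Y$ fixed by the bases in the ``Bases for Everything'' paragraph. This is finite but fiddly because of the $\Gamma$-function factors. Stepping back, though, the genuinely delicate ingredient in the whole chain is Step~6 --- the two-variable Mellin--Barnes continuation of \eqref{eq:resolutionIGamma}, carried out by extracting each coefficient of $y_1^k$ and continuing in $y_2$ --- and everything downstream of Step~6, including the present argument, is bookkeeping.
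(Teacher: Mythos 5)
Your proposal is correct and follows the paper's approach exactly: the paper's proof consists of the single sentence ``Argue exactly as in the proof of Theorem~\ref{thm:C3Z3CRC},'' which in turn uses the fact that $\U$ carries the $\cLX$-determining family $I_\cX$ to the continued $\cLY$-determining family $\widetilde{I}_Y$, with the side conditions (a), (c), symplecticity, and degree-preservation left as ``easy to check'' from the explicit formulas. Your write-up just unpacks this, and your sketch of condition~(b) --- reducing it to $\U(p)=\tfrac{p_1}{3}\U(\fun_0)$, $\U(p^2)=\tfrac{p_1^2}{9}\U(\fun_0)$, and $p_1\U(\fun_{1/3})=p_1\U(\fun_{2/3})=0$ via the relations $p_1(p_1-3p_2)=0$, $p_1^2p_2=0$ --- is a correct elaboration of a point the paper leaves implicit.
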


\begin{proof}
    Argue exactly as in the proof of Theorem~\ref{thm:C3Z3CRC}.
\end{proof}

\subsection*{Conclusions}
 
Having proved the Crepant Resolution Conjecture in this case, we can
now extract information about small quantum cohomology using
Corollary~\ref{cor:Ruan}.  When we do this, we find  that the quantum
corrections to Ruan's conjecture do not vanish:

\begin{cor}
  Let $\cX = K_{\PP(1,1,3)}$ and let $Y \to X$ be the crepant
  resolution of the coarse moduli space of $\cX$.  There is a power
  series 
  \[
  f(u) = {2 \pi \over \sqrt{3} \Gamma\big({1 \over 3}\big)^3} 
 \Bigg( {-2} u^{1/3} + {3757 \over 648} u^{4/3} + \cdots \Bigg)
 \]
 such that the algebra obtained from the small quantum cohomology
 algebra of $Y$ by analytic continuation in the parameter $q_2$
 followed by the substitution
  \[
  q_i =
  \begin{cases}
    e^{f(u)} u^{1/3} &  i=1 \\
    e^{-3f(u)} & i=2
  \end{cases}
  \]
  is isomorphic to the small quantum cohomology algebra of $\cX$, via
  an isomorphism which sends $p \in H(\cX)$ to ${1 \over 3} p_1 \in
  H(Y)$.
\end{cor}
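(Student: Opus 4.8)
The plan is to apply Corollary~\ref{cor:Ruan}, whose hypotheses all hold here: $\cX = K_{\PP(1,1,3)}$ is a Calabi--Yau $3$-fold, hence semi-positive; $\U$ has a non-equivariant limit, exhibited in \eqref{eq:KP113nonequivariantU}; and Conjecture~\ref{CRC} holds by Theorem~\ref{thm:KP113CRC}. Since $s = 1$, $r = 2$ and $r_1 = 1/3$, the Corollary will produce the substitution $q_1 = e^{c^1 + f^1} u_1^{1/3}$, $q_2 = e^{c^2 + f^2}$ together with the isomorphism sending $\phi_1 = p$ to $\pi^\star p = \tfrac13 p_1$, which is already the shape claimed. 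It therefore remains to compute $c^1, c^2$ from \eqref{eq:defofci} and the power series $f^1, f^2$ from \eqref{eq:defoffi}, using the explicit formula for $\U$ found in Step~7, and to check that $f^2 = -3 f^1$ so that a single series $f := f^1$ does the job.

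First I would compute the class $c$ defined by $\U(\fun_\cX) = \fun_Y - c z^{-1} + O(z^{-2})$. Expanding the Step~7 formula for $\U(\fun_0)$ in powers of $z^{-1}$, the ratio of sines contributes $1 + O(z^{-2})$, while the two ratios of Gamma functions each contribute a term of order $z^{-1}$; a short computation shows that those two contributions cancel. Hence $\U(\fun_0) = \fun_Y + O(z^{-2})$, so $c = 0$ and in particular $c^1 = c^2 = 0$. This is also visible from the first column of \eqref{eq:KP113nonequivariantU}, which has no $z^{-1}$ entry.

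Next I would identify the classes $b_e$ appearing in \eqref{eq:defoffi}. The orbifold degrees of our basis are $\deg \fun_0 = 0$, $\deg p = 2$, $\deg p^2 = 4$, $\deg \fun_{1/3} = 2$, $\deg \fun_{2/3} = 4$, so $b_0 = b_1 = b_3 = 0$ by definition, and $b_2 = 0$ because $\U(p^2 z^{-1}) = \tfrac13 p_1 p_2 z^{-1}$ has vanishing leading term. Only $b_4$ survives: extracting the $z^0$-part of $\U(\fun_{2/3} z^{-1})$ from the Step~7 formula (equivalently, from the last column of \eqref{eq:KP113nonequivariantU}) gives $b_4 = -\tfrac{2\pi}{3\sqrt3\,\Gamma(1/3)^3}(p_1 - 3 p_2)$. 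Because the sum in \eqref{eq:defoffi} runs over $r+1 \leq e \leq N$, only $e = 4$ contributes, so $f^1 \varphi_1 + f^2 \varphi_2 + g \lambda = -b_4 \sum_{d \in \Eff(\cX)} \correlator{\phi^4}^\cX_{0,1,d}\, u^d$. Since $b_4$ is a fixed scalar multiple of $\varphi_1 - 3 \varphi_2$ and carries no $\lambda$, this forces $g = 0$ and, to all orders, $f^2 = -3 f^1$ --- exactly what the statement requires.

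Finally I would pin down $f := f^1$ numerically. In the non-equivariant limit $(\fun_{1/3}, \fun_{2/3})_\cX = \tfrac13$, so $\phi^4 = 3\fun_{1/3}$ and $\correlator{\phi^4}^\cX_{0,1,d} = 3 \correlator{\fun_{1/3}}^\cX_{0,1,d}$; substituting the value of $b_4$ then yields $f^1 = \tfrac{2\pi}{\sqrt3\,\Gamma(1/3)^3} \sum_{d} \correlator{\fun_{1/3}}^\cX_{0,1,d}\, u^d$, and feeding in $\correlator{\fun_{1/3}}^\cX_{0,1,1/3} = -2$ and $\correlator{\fun_{1/3}}^\cX_{0,1,4/3} = \tfrac{3757}{648}$ from \eqref{eq:KP113invariants} gives $f(u) = \tfrac{2\pi}{\sqrt3\,\Gamma(1/3)^3}\big( -2 u^{1/3} + \tfrac{3757}{648} u^{4/3} + \cdots \big)$. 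With $c^1 = c^2 = 0$, $r_1 = \tfrac13$ and $f^2 = -3f$, Corollary~\ref{cor:Ruan} then gives precisely the asserted isomorphism, the analytic continuation being in $q_2$ alone (reflecting the gluing \eqref{eq:KP113resolutiongluing}, in which only $y_2$ passes from small to large). The main obstacle, such as it is, lies in the two bookkeeping points --- the cancellation in the $z^{-1}$-coefficient of $\U(\fun_0)$ and fixing the normalization of $\phi^4$ so that the overall constant $\tfrac{2\pi}{\sqrt3\,\Gamma(1/3)^3}$ emerges correctly; otherwise the argument is essentially the one already used for Corollary~\ref{cor:C3Z3CCRC}, now run through Corollary~\ref{cor:Ruan} instead of Corollary~\ref{cor:CCRC}.
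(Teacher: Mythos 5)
Your argument is correct and follows exactly the route the paper intends: the paper's own proof is precisely "apply Corollary~\ref{cor:Ruan}, note $c^1=c^2=0$, and read $f$ off from \eqref{eq:defoffi} and \eqref{eq:KP113invariants}." Your unwinding of those steps --- checking the $z^{-1}$-cancellation in $\U(\fun_0)$, isolating $b_4 = -\tfrac{2\pi}{3\sqrt{3}\,\Gamma(1/3)^3}(p_1-3p_2)$ as the only nonvanishing $b_e$ in the range $r+1\le e\le N$, deducing $f^2=-3f^1$ and $g=0$, and matching the normalization $\phi^4=3\fun_{1/3}$ against \eqref{eq:KP113invariants} --- is exactly what that one-line proof is implicitly appealing to.
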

\begin{proof}
  This is Corollary~\ref{cor:Ruan}.  The quantities $c_1$ and $c_2$
  defined in \eqref{eq:defofci} are zero, and the power series $f(u)$
  comes from equations \eqref{eq:defoffi} and
  \eqref{eq:KP113invariants}.
\end{proof}

\section{Example III: $\cX = \big[\CC^3/\ZZ_5\big]$,
  $\cY = K_{\PP(1,1,3)}$}
\label{sec:C3Z5}

We next consider an example of a crepant partial resolution.  Let
$\cX$ be the orbifold $\big[ \CC^3/\ZZ_5 \big]$ where $\ZZ_5$ acts on
$\CC^3$ with weights $(1,1,3)$.  The coarse moduli space $X$ of $\cX$
is the quotient singularity $\CC^3/\ZZ_5$, and a crepant partial
resolution $\cY$ of $X$ is the canonical bundle $K_{\PP(1,1,3)}$. We
make the obvious modifications to our general setup, replacing the
vector space $H(Y)$ with
\[
H(\cY) := H^\bullet_{\text{CR},T}(\cY;\CC) \otimes \CC(\lambda)
\]
and writing $Y$ for the coarse moduli space of $\cY$.  In this section
we omit all details, as the argument is completely parallel to that in
Sections~\ref{sec:C3Z3} and~\ref{sec:KP113}.

\begin{figure}[thbp]
  \centering
     \begin{picture}(180,10)(-110,-5)
      \multiput(-80,0)(20,0){7}{\makebox(0,0){$\cdot$}}
      \put(0,0){\makebox(0,0){$\bullet$}}
      \put(22,6){\makebox(0,0){$\scriptstyle 1,2$}}
      \put(62,6){\makebox(0,0){$\scriptstyle 3$}}
      \put(-102,6){\makebox(0,0){$\scriptstyle 4$}}
      \put(0,0){\vector(1,0){20}}
      \put(0,0){\vector(1,0){60}}
      \put(0,0){\vector(-1,0){100}}
    \end{picture}
    \caption{The secondary fan for $\cY = K_{\PP(1,1,3)}$}
    \label{fig:KP113secondaryfan}
\end{figure}

\noindent The secondary fan is shown in
Figure~\ref{fig:KP113secondaryfan}, the $B$-model moduli space $\cM_B$
is $\PP(1,5)$, and the $I$-functions are
\[
I_\cX(x_1,x_2,z) := 
z \, 
\sum_{k,l \geq 0} {x_1^k x_2^l  \over k! l! \, z^{k+l}}
\prod_{\substack{b : 0 \leq b < {k+2l  \over 5}, \\ 
    \fr{b} = \fr{k+2l\over 5}}} 
\big(\textstyle {\lambda \over 5} - b z)^2 \; 
\prod_{\substack{b : 0 \leq b < {3k+l  \over 5}, \\ 
    \fr{b} = \fr{3k+l  \over 5}}} 
\big(\textstyle {3\lambda \over 5} - b z) \;
\fun_{\fr({k+2l\over 5})}
\]
(\emph{c.f.} \cite{CCIT:computing}*{Theorem~4.6 and Proposition~6.1}) and
\begin{multline*}
  I_\cY(y_1,y_2,z) :=  z \, 
  \sum_{\substack{d: d \geq 0,\\
      3d\in \ZZ}}
  \sum_{\substack{e: e \geq 0,\\
      3e\in \ZZ}}
  {y_1^{3d + 3p/z} y_2^{3e} \over (3e)! \, z^{3e} }
  { \prod_{\substack{b:b \leq 0 \\ \fr(b) = \fr(d-e)}} (p+b z)^2
    \over 
    \prod_{\substack{b:b \leq d-e \\ \fr(b) = \fr(d-e)}} (p+b z)^2}
  \times \\
  { \prod_{\substack{b:-5d-e<b\leq 0 \\ \fr(b) = \fr(-5d-e)}} (\lambda
    - 5p+b z)
    \over 
    \prod_{1 \leq m \leq 3 d} (3p+mz)}
  \fun_{\fr(e-d)},
\end{multline*} 
(\emph{c.f.} Section~\ref{sec:KP113} above).  Use the bases 
\begin{align*}
  &\phi_0 = \fun_0, &
  &\phi_1 = \fun_{1/5}, &
  &\phi_2 = \fun_{2/5}, &
  &\phi_3 = \fun_{3/5}, &
  &\phi_4 = \fun_{4/5} \\
\intertext{for $H(\cX)$ and}
  &\varphi_0 = \fun_0, &
  &\varphi_1 = p, &
  &\varphi_2 = p^2, &
  &\varphi_3 = \fun_{1/3}, &
  &\varphi_4 = \fun_{2/3}
\end{align*}
for $H(\cY)$: for the notation see Sections~\ref{sec:C3Z3} and the
discussion above Theorem~\ref{thm:BZn}.  The Mellin--Barnes method
produces a linear symplectomorphism $\U:\cHX\to\cH_\cY$ such that
$\U(I_\cX(x_1,x_2,-z)) = \widetilde{I}_\cY(x_1,x_2,-z)$ where
$\widetilde{I}_\cY$ is the analytic continuation of $I_Y$.  In the
non-equivariant limit $\lambda \to 0$, the matrix of $\U$ is given by
\[
\begin{pmatrix}
  \scriptscriptstyle
  1 & \scriptscriptstyle 
  0 & \scriptscriptstyle 
  0 & \scriptscriptstyle 
  0 & \scriptscriptstyle 
  0 \\ \scriptscriptstyle 
  0 & \scriptscriptstyle 
  -\frac{\sqrt{2+\frac{2}{\sqrt{5}}} \pi }{\Gamma
    \left(\frac{2}{5}\right) \Gamma \left(\frac{4}{5}\right)^2} & \scriptscriptstyle
  \frac{25 \pi }{\sqrt{2 \left(5+\sqrt{5}\right)} \Gamma
    \left(-\frac{2}{5}\right)^2 \Gamma \left(\frac{4}{5}\right)} & \scriptscriptstyle
  -\frac{5 \pi  z}{\sqrt{2 \left(5+\sqrt{5}\right)} \Gamma
    \left(-\frac{4}{5}\right) \Gamma \left(\frac{2}{5}\right)^2} & \scriptscriptstyle
  \frac{\sqrt{\frac{5}{2} \left(5+\sqrt{5}\right)} \pi  z}{\Gamma
    \left(-\frac{2}{5}\right) \Gamma \left(\frac{1}{5}\right)^2} \\ \scriptscriptstyle 
  -\frac{\pi ^2}{z^2} & \scriptscriptstyle
  -\frac{5 \left(5+3 \sqrt{5}\right) \pi ^2}{z \Gamma
    \left(-\frac{1}{5}\right)^2 \Gamma \left(\frac{2}{5}\right)} & \scriptscriptstyle
  \frac{\left(-1+\frac{3}{\sqrt{5}}\right) \pi ^2}{z \Gamma
    \left(\frac{3}{5}\right)^2 \Gamma \left(\frac{4}{5}\right)} & \scriptscriptstyle
  \frac{\left(1-\frac{3}{\sqrt{5}}\right) \pi ^2}{\Gamma
    \left(\frac{1}{5}\right) \Gamma \left(\frac{2}{5}\right)^2} & \scriptscriptstyle
  \frac{\left(1+\frac{3}{\sqrt{5}}\right) \pi ^2}{\Gamma
    \left(\frac{1}{5}\right)^2 \Gamma \left(\frac{3}{5}\right)} \\ \scriptscriptstyle 
  \frac{\Gamma \left(\frac{2}{3}\right)^3}{5 z} & \scriptscriptstyle
  \frac{\sqrt{3} \csc \left(\frac{2 \pi }{15}\right) \Gamma
    \left(\frac{2}{3}\right)^3}{10 \Gamma \left(\frac{2}{5}\right)
    \Gamma \left(\frac{4}{5}\right)^2} & \scriptscriptstyle
  \frac{\sqrt{3} \csc \left(\frac{\pi }{15}\right) \Gamma
    \left(\frac{2}{3}\right)^3}{10 \Gamma \left(\frac{3}{5}\right)^2
    \Gamma \left(\frac{4}{5}\right)} & \scriptscriptstyle
  \frac{\sqrt{3} z \Gamma \left(\frac{2}{3}\right)^3 \sec
    \left(\frac{7 \pi }{30}\right)}{10 \Gamma \left(\frac{1}{5}\right)
    \Gamma \left(\frac{2}{5}\right)^2} & \scriptscriptstyle
  -\frac{\sqrt{3} z \Gamma \left(\frac{2}{3}\right)^3 \sec
    \left(\frac{\pi }{30}\right)}{10 \Gamma \left(\frac{1}{5}\right)^2
    \Gamma \left(\frac{3}{5}\right)} \\ \scriptscriptstyle 
  -\frac{\Gamma \left(\frac{1}{3}\right)^3}{5 z^2} & \scriptscriptstyle
  -\frac{\sqrt{3} \Gamma \left(\frac{1}{3}\right)^3 \sec
    \left(\frac{\pi }{30}\right)}{10 z \Gamma \left(\frac{2}{5}\right)
    \Gamma \left(\frac{4}{5}\right)^2} & \scriptscriptstyle
  \frac{\sqrt{3} \Gamma \left(\frac{1}{3}\right)^3 \sec \left(\frac{7
        \pi }{30}\right)}{10 z \Gamma \left(\frac{3}{5}\right)^2
    \Gamma \left(\frac{4}{5}\right)} & \scriptscriptstyle
  \frac{\sqrt{3} \csc \left(\frac{\pi }{15}\right) \Gamma
    \left(\frac{1}{3}\right)^3}{10 \Gamma \left(\frac{1}{5}\right)
    \Gamma \left(\frac{2}{5}\right)^2} & \scriptscriptstyle
  \frac{\sqrt{3} \csc \left(\frac{2 \pi }{15}\right) \Gamma
    \left(\frac{1}{3}\right)^3}{10 \Gamma \left(\frac{1}{5}\right)^2
    \Gamma \left(\frac{3}{5}\right)} 
\end{pmatrix}.
\]
Thus  Conjecture~\ref{CRC}
holds, exactly as stated, for the crepant \emph{partial} resolution
$\cY \to X$.

\begin{thm} \label{thm:C3Z5CRC}
  Conjecture~\ref{CRC} holds for $\cX = \big[\CC^3/\ZZ_5\big]$, $\cY =
  K_{\PP(1,1,3)}$. \qed  
\end{thm}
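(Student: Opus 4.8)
The plan is to run, step for step, the seven-step argument of Sections~\ref{sec:C3Z3} and~\ref{sec:KP113}, with $\cX = \big[\CC^3/\ZZ_5\big]$ playing the role of $\big[\CC^3/\ZZ_3\big]$ from Example~I and $\cY = K_{\PP(1,1,3)}$ playing the role of $Y$; note that $\cY$ is exactly the space that played the part of $\cX$ in Example~II, so much of the work is already in place. In Steps~1--2 one shows that $(y_1,y_2) \mapsto I_\cY(y_1,y_2,-z)$ is a family of elements of $\cL_\cY$ --- by the argument of Proposition~\ref{pro:KP113}, using Theorem~\ref{thm:smalllinebundle} for the concave line bundle $K_{\PP(1,1,3)} \to \PP(1,1,3)$, the small $J$-function of $\PP(1,1,3)$ from \cite{CCLT}*{Theorem~1.7}, and Iritani's reconstruction theorem \cite{Iritani:gen} --- and that, via the accompanying mirror map and the reconstruction part of Proposition~\ref{thm:stuff}, this family determines $\cL_\cY$. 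In Steps~3--4 one does the same for $\cX = \big[\CC^3/\ZZ_5\big]$, which is the total space of a direct sum of three line bundles over $B\ZZ_5$ with weights $(1,1,3)$: Theorem~\ref{thm:BZn} (with $n = 5$, $m = 3$) and the twisted mirror theorem of \cite{CCIT:computing}, together with Proposition~\ref{thm:stuff}(a) and Iritani's reconstruction theorem, give $(x_1,x_2) \mapsto I_\cX(x_1,x_2,-z) \in \cL_\cX$, and the associated mirror map --- built as in Corollaries~\ref{cor:KP2mirror} and~\ref{cor:KP113mirror} --- together with Proposition~\ref{thm:stuff} shows that this family determines $\cL_\cX$.

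The substance is in Steps~5--7. The secondary fan of Figure~\ref{fig:KP113secondaryfan} has two maximal chambers, so the $B$-model moduli space is $\cM_B = \PP(1,5)$; on its two co-ordinate patches the components of $I_\cX$ and of $I_\cY$ (in the chosen bases) form bases of solutions of two Picard--Fuchs systems, and the gluing change of variables between the patches carries one system into the other. Continuing $I_\cY$ from the chamber near $\cY$ to the chamber near $\cX$ by the Mellin--Barnes method of Section~\ref{sec:C3Z3} --- extracting, for each $k$, the coefficient of the $k$-th power of the first co-ordinate and analytically continuing it in the second --- produces a multivalued analytic function $\widetilde{I}_\cY(x_1,x_2,z)$ on the chamber near $\cX$. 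Since this continuation carries solutions of the Picard--Fuchs system to solutions, there is a $\CC(\!(z^{-1})\!)$-linear map $\U\colon \cHX \to \cH_\cY$, independent of $(x_1,x_2)$, with $\U\big(I_\cX(x_1,x_2,-z)\big) = \widetilde{I}_\cY(x_1,x_2,-z)$; comparing coefficients of $x_1^a x_2^b (\log x_1)^c$ determines $\U$ explicitly, with non-equivariant limit the matrix displayed above. From that matrix one checks directly that $\U$ is symplectic and degree-preserving and satisfies conditions~(a) and~(c) of Conjecture~\ref{CRC} --- condition~(b) being vacuous here since $H^2(\cX;\CC) = 0$. Finally, because $\U$ was built to carry $I_\cX$ to the analytic continuation of $I_\cY$ and because $\cL_\cX$ and $\cL_\cY$ are uniquely determined by their $I$-functions, $\U$ carries $\cL_\cX$ onto the analytic continuation of $\cL_\cY$, exactly as in the proof of Theorem~\ref{thm:C3Z3CRC}. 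That $\cY \to X$ is only a crepant \emph{partial} resolution plays no role in this argument.

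I expect the main obstacle to be Step~6: the Mellin--Barnes analytic continuation now takes place in a genuinely two-variable setting, so one must track carefully which families of poles of the integrand contribute on each side of the wall in $\cM_B$ and which residues vanish in $H(\cX)$ or $H(\cY)$ --- the analogue of the residues divisible by $p^3$ that dropped out in Section~\ref{sec:C3Z3}. Once $\widetilde{I}_\cY$ has been computed, the remaining checks, in particular verifying conditions~(a) and~(c) of Conjecture~\ref{CRC} against the explicit matrix, are routine.
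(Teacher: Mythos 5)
Your proposal is correct and follows essentially the same route as the paper: the paper states at the start of Section~\ref{sec:C3Z5} that ``we omit all details, as the argument is completely parallel to that in Sections~\ref{sec:C3Z3} and~\ref{sec:KP113},'' and you have filled in precisely that parallel seven-step argument, identifying the correct inputs (Theorem~\ref{thm:smalllinebundle} and Proposition~\ref{pro:KP113} on the $\cY$ side, the $B\ZZ_5$-twisted mirror theorem \cite{CCIT:computing} plus Proposition~\ref{thm:stuff} on the $\cX$ side, the secondary fan and $\cM_B = \PP(1,5)$, and the Mellin--Barnes continuation giving $\U$), and you correctly note that condition~(b) is vacuous here since $H^2(\cX;\CC)=0$ while the partial-resolution nature of $\cY \to X$ plays no role in the symplectomorphism argument itself. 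One small remark: Theorem~\ref{thm:BZn} as stated gives only the one-parameter restriction $\tau = x\,\fun_{1/n}$, so the two-parameter $I_\cX(x_1,x_2,z)$ used in Step~3 comes from the slightly more general form of the same computation in \cite{CCIT:computing}*{Theorem~4.6 and Proposition~6.1}, exactly as the paper cites; you reference both, so this is only a matter of emphasis.
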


When we try to draw conclusions about small quantum cohomology,
however, a new phenomenon emerges.  For simplicity, let us discuss
this in the non-equivariant limit $\lambda \to 0$, indicating this by
a ($\ast$) following our equations.  In Section~\ref{sec:C3Z3}, when
we were considering $\cX=\big[\CC^3/\ZZ_3\big]$, we had
\[
\tag{$\ast$}
\U\big(\fun_{[\CC^3/\ZZ_3}\big) = \fun_{K_{\PP^2}} + O(z^{-2})
\]
and hence
\[
\tag{$\ast$}
\U\big(J_{[\CC^3/\ZZ_3]}(-z)\big) = -z \fun_{K_{\PP^2}} + O(z^{-1}).
\]
We can therefore identify $\U\big(J_{[\CC^3/\ZZ_3]}(-z)\big)$ with 
\[
\tag{$\ast$}
J_{K_{\PP^2}}(q,-z) = -z \fun_{K_{\PP^2}} + p \log q +
O(z^{-1})
\]
by setting $\log q = 0$, or in other words $q = 1$.  This is how the
specialization of quantum parameters in the Cohomological Crepant
Resolution Conjecture arises: see Corollary~\ref{cor:C3Z3CCRC} and
\cite{Coates--Ruan}.  In the case at hand, however, we have
\[
\tag{$\ast$}
\U\big(\fun_{[\CC^3/\ZZ_5]}\big) = \fun_{K_{\PP(1,1,3)}} + \textstyle
{1\over 5 }\Gamma({2 \over 3})^3  \fun_{1/3} + O(z^{-2})
\]
and thus
\[
\tag{$\ast$}
\U\big(J_{[\CC^3/\ZZ_5]}(-z)\big) = -z \fun_{K_{\PP(1,1,3)}} - \textstyle
{1\over 5 }\Gamma({2 \over 3})^3  \fun_{1/3}+ O(z^{-1}),
\]
which is not equal to the small $J$-function
$J_{K_{\PP(1,1,3)}}(q,-z)$ for any $q$ because the class $\fun_{1/3}$
\emph{comes from the twisted sector}.  We do have an equality
\[
\U\big(J_{[\CC^3/\ZZ_5]}(-z)\big) =
\JJ_{K_{\PP(1,1,3)}}(\tau,-z) \qquad \text{where} \qquad \tau = - \textstyle
{1\over 5 }\Gamma({2 \over 3})^3  \fun_{1/3},
\tag{$\ast$}
\]
but it does not let us conclude anything about small quantum
cohomology.  This is because there is no Divisor Equation for
Chen--Ruan classes from the twisted sector, so we cannot trade the
shift $\tau = 0 \rightsquigarrow \tau = c \fun_{1/3}$ for a
specialization $q \rightsquigarrow e^c$ (or indeed for any other
specialization of the quantum parameter).  

\subsection*{Conclusions}

In light of this, it seems likely that any generalization of the
Cohomological Crepant Resolution Conjecture (and hence also any
generalization of Ruan's Conjecture) to crepant partial resolutions
\emph{cannot be phrased in terms of small quantum cohomology alone}:
it must involve big quantum cohomology.  It seems also that any such
generalization will no longer involve only roots of unity.

\section{Example IV: A Toric Flop}
\label{sec:toricflop}

Finally, consider the action of $\Cstar$ on $\CC^5$ such that $s \in
\Cstar$ acts as
\begin{equation}
  \label{eq:flopaction}
  (x,y,z,u,v) 
  \longmapsto
  (sx,sy,sz,s^{-1}u,s^{-2}v)
\end{equation}
The secondary fan is:

\begin{figure}[thbp]
  \centering
     \begin{picture}(120,10)(-70,-5)
      \multiput(-60,0)(20,0){6}{\makebox(0,0){$\cdot$}}
      \put(0,0){\makebox(0,0){$\bullet$}}
      \put(22,6){\makebox(0,0){$\scriptstyle 1,2,3$}}
      \put(-22,6){\makebox(0,0){$\scriptstyle 4$}}
      \put(-42,6){\makebox(0,0){$\scriptstyle 5$}}
      \put(0,0){\vector(1,0){20}}
      \put(0,0){\vector(-1,0){20}}
      \put(0,0){\vector(-1,0){40}}
    \end{picture}
    \caption{The secondary fan for a toric flop}
    \label{fig:flopsecondaryfan}
\end{figure}
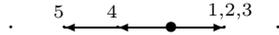

\noindent For $\xi$ in the right-hand chamber of the secondary fan,
the GIT quotient $Y := \CC^5 \GIT{\xi} \Cstar$ is the total space of
the vector bundle $\cO(-1) \oplus \cO(-2) \to \PP^2$.  For $\xi$ in
the left-hand chamber, the GIT quotient $\cX := \CC^5 \GIT{\xi}
\Cstar$ is the total space of $\cO(-1) \oplus \cO(-1) \oplus \cO(-1)
\to \PP(1,2)$.  The birational transformation $Y \dashrightarrow \cX$
induced by moving from the right-hand chamber to the left-hand chamber
is a flop \cite{Corti}.  

To treat this example, we need to make some changes to our general
setup (described in Section~\ref{sec:statement}), but the required
modifications are obvious and so we make them without comment.  As we
have not yet discussed a birational transformation of this type, we
once again give some details of the calculation: the reader will see
that our methods apply here too without significant change.

\subsection*{Bases and $I$-Functions}

We have
\begin{align*}
  & r := \rank H^2(Y;\CC) = 1, &
  & s := \rank H^2(\cX;\CC) = 1.
\end{align*}
The action of $T = \Cstar$ on $\CC^5$ such
that $\alpha \in T$ acts as
\[
(x,y,z,u,v)
\longmapsto
(\alpha x, \alpha y, \alpha z, u, v)
\]
induces actions of $T$ on $\cX$ and $Y$, and the flop $Y
\dashrightarrow \cX$ is $T$-equivariant.  Let $p$ be the canonical
$T$-equivariant lift of the first Chern class of the line bundle
$\cO(1) \to \PP^2$, so that
\[
H(Y) = \CC(\lambda)[p]/\langle p^3 \rangle.
\]
We use the basis
\begin{align*}
  & \varphi_0 = 1, &
  &  \varphi_1 = p, &
  & \varphi_2 = p^2
\end{align*}
for $H(Y)$.  The inertia stack of $\cX$ is the disjoint union $\cX_0
\coprod \cX_{1/2}$, where $\cX_0 = \cX$ and $\cX_{1/2} = B \ZZ_2$.
Let $\fun_f \in H(\cX)$ denote the class which restricts to the unit
class on the component $\cX_f$ and restricts to zero on the other
component, and let $\fp \in H(\cX)$ denote the canonical
$T$-equivariant lift of the first Chern class of the line bundle
$\cO(1) \to \PP(1,2)$, pulled back to $\cX$ via the natural projection
$\cX \to \PP(1,2)$ and then regarded as an element of Chen--Ruan
cohomology via the inclusion $\cX =\cX_0 \to \cIX$.  We use the basis
\begin{align*}
  &\phi_0 = \fun_0, &
  &\phi_1 = \fp, &
  &\phi_2 = \fun_{1/2} 
\end{align*}
for $H(\cX)$.

Let
\begin{align*}
&I_Y(y,z) = z \, \sum_{d \geq 0}
{
  \prod_{-2d < m \leq 0} ( 2 \lambda- 2 p + m z)
  \prod_{-d < m \leq 0} ( \lambda -  p + m z)
  \over \prod_{0 < m \leq d} ( p + m z)^3
} \, y^{d + p/z}, \\
\intertext{and let}
& I_\cX(x,z) =  z \, x^{-\lambda/z}
  \sum_{\substack{d: d \geq 0,\\
      2d\in \ZZ}}
  x^{d+\fp/z}
  {
    \prod_{\substack{b : -d < b \leq 0, \\ \fr(b) = \fr(-d)}}
    (\lambda -  \fp + m z)^3 
    \over
    \prod_{\substack{b : 0 < b \leq d, \\ \fr(b) = \fr(d)}}
    (\fp + b z)
    \prod_{1 \leq m \leq 2d} (2\fp+mz)
  }
  \fun_{\fr(-d)}
\end{align*}
Arguing exactly as before yields:

\begin{proposition} \label{thm:flopmirror} We have $I_Y(y,-z) \in
  \cLY$ for all $y$ such that $0<|y|<{1 \over 4}$, and $I_\cX(x,-z)
  \in \cLX$ for all $x$ such that $|x|<4$. \qed
\end{proposition}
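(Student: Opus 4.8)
The plan is to prove both assertions by the method of Propositions~\ref{thm:KP2mirror} and~\ref{pro:KP113}. In each case $\cZ$ is the total space of a direct sum of concave line bundles over a compact base orbifold, so by the discussion of twisted Gromov--Witten invariants its cone $\cLZ$ is obtained from the small $J$-function of the base by the hypergeometric modification of Theorem~\ref{thm:smallvb}; one then checks that the stated $I$-function is exactly this modification. The first step is to read the bundle-over-orbifold presentations off the GIT data: $Y$ is the total space of $\cO(-1)\oplus\cO(-2)\to\PP^2$ and $\cX$ is the total space of $\cO(-1)^{\oplus 3}\to\PP(1,2)$, the $T$-action covering the trivial action on the base in each case. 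Tracking the $T$-action through the quotient, the $T$-equivariant first Chern classes of the summands are $\lambda-p$ and $2\lambda-2p$ for $Y$ (where $p=c_1(\cO_{\PP^2}(1))$) and $\lambda-\fp$, three times, for $\cX$ (where $\fp=c_1(\cO_{\PP(1,2)}(1))$); in particular the $\cO(-2)$ summand of $Y$ carries the square of the scaling action, so $c_1^T=2\lambda-2p$ rather than $\lambda-2p$.

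For $Y$ I would apply Theorem~\ref{thm:smallvb} with $\cB=\PP^2$ and $\cE=\cO(-1)\oplus\cO(-2)$. Givental's formula for the small $J$-function of $\PP^2$ (recalled in the proof of Proposition~\ref{thm:KP2mirror}) gives $\correlator{\Phi^\epsilon/(z-\psi)}^{\PP^2}_{0,1,d}\Phi_\epsilon=1/\prod_{0<m\leq d}(p+mz)^3$ for $d>0$, and the modification factor is $M_\cE(d)=\prod_{-d<m\leq 0}(\lambda-p+mz)\cdot\prod_{-2d<m\leq 0}(2\lambda-2p+mz)$, the second factor being the general form of the hypergeometric modification (as in \cite{CCIT:computing}) for a line bundle with equivariant first Chern class $2\lambda-2p$. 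Substituting, the family of elements of $\cLY$ produced by Theorem~\ref{thm:smallvb} is precisely the stated $I_Y(y,z)$. Since the coefficient of $y^d$ grows like $\binom{2d}{d}\sim 4^d$, the series converges for $0<|y|<\tfrac14$.

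For $\cX$ I would argue as in the proof of Proposition~\ref{pro:KP113}. The base $\PP(1,2)$ is a weighted projective line, and its small $J$-function --- including the contributions of the twisted sector over the stacky point, which appear at half-integral degrees --- is given by the mirror theorem for weighted projective space (\cite{CCLT}*{Theorem~1.7}). Applying Theorem~\ref{thm:smallvb} over this orbifold base with $\cE=\cO(-1)^{\oplus 3}$ (the proof of Theorem~\ref{thm:smalllinebundle} via \cite{CCIT:computing}*{Theorem~4.6} applies verbatim when $\cB$ is an orbifold) and modification factor $M_\cE(d)=M_{\cO(-1)}(d)^3$ built from $c_1^T(\cO(-1))=\lambda-\fp$, one gets a family of elements of $\cLX$ equal to $x^{\lambda/z}I_\cX(x,z)$; multiplying by $\exp\big((\log x)\lambda/z\big)$ and invoking Proposition~\ref{thm:stuff}(a) (invariance of $\cLX$ under multiplication by $\exp(a\lambda/z)$) removes the prefactor $x^{-\lambda/z}$, exactly as in Step~3 of Section~\ref{sec:C3Z3}. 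The analogous coefficient estimate (the coefficient of $x^d$ now decaying like $4^{-d}$) gives convergence for $|x|<4$.

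There is no single hard step. The one point that demands genuine care is the one flagged above: one must compute the $T$-equivariant first Chern classes of the bundle summands correctly from the GIT construction, since the $T$-action on $Y$ acts with different weights on the two summands and so is \emph{not} the standard fibrewise rotation to which Theorem~\ref{thm:smalllinebundle} is tailored, and then use the correspondingly general form of the modification factor. The remaining work --- the bookkeeping of the fractional-part conditions and the matching of the twisted-sector classes $\fun_{\fr(\cdot)}$ of $\cX$ with the inertia structure of $\PP(1,2)$ --- is routine and completely parallel to Examples~I and~II.
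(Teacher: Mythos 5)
Your proposal is correct and fleshes out exactly what the paper means by ``arguing exactly as before'': the paper gives no further detail beyond that phrase, and your reconstruction --- read off the bundle-over-base presentation from the GIT data, apply the hypergeometric modification of the base's small $J$-function via Theorem~\ref{thm:smallvb}/\cite{CCIT:computing}, and for $\cX$ use Proposition~\ref{thm:stuff}(a) to absorb the $x^{-\lambda/z}$ prefactor --- is the intended argument. You are also right to flag as the one delicate point that the $\cO(-2)$ summand of $Y$ carries the square of the $T$-rotation, so its equivariant first Chern class is $2\lambda-2p$ and the modification factor for that summand is $\prod_{-2d<m\leq 0}(2\lambda-2p+mz)$ rather than the $\lambda+\rho+bz$ form literally written in Theorems~\ref{thm:smalllinebundle}--\ref{thm:smallvb}; this is not a gap, since the underlying result \cite{CCIT:computing}*{Theorem~4.6} is stated for arbitrary equivariant line bundles and gives exactly this factor, but the theorems as stated in this paper are tailored to unit fibre weight and a reader following them verbatim would otherwise miss the factor of $2$ on $\lambda$. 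The convergence radii $1/4$ and $4$ are as you compute, and the $\cX$-side does not need Iritani's reconstruction theorem (unlike Proposition~\ref{pro:KP113}) because there is only one quantum parameter and $\fun_{1/2}$ is not of degree two, as you implicitly use.
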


\noindent Furthermore, as 
\begin{align*}
& x ^{-\lambda/z} I_\cX(x,-z) = -z + \fp \log x + O(z^{-1}) 
& \text{and} &
& I_Y(y,-z) = -z + p \log y + O(z^{-1}) 
\end{align*}
we conclude that:

\begin{cor}
    \begin{align*}
    & J_\cX(u,z) = x^{\lambda/z}I_\cX(u,z) &
    & \text{and} &
    & J_Y(q,z) = I_Y(q,z) .
  \end{align*}
  Note that the mirror maps here are trivial.\qed
\end{cor}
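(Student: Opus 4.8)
The plan is to rerun the argument of Corollaries~\ref{cor:KP2mirror} and~\ref{cor:C3Z3mirror} one more time, the new feature being that in this Calabi--Yau flop the ``mirror maps'' degenerate to the identity. I will use two ingredients: Proposition~\ref{thm:flopmirror}, which places $y\mapsto I_Y(y,-z)$ on $\cLY$ and $x\mapsto I_\cX(x,-z)$ on $\cLX$; and the standard characterisation, obtained by writing out the equations~\eqref{eq:coneequations} for the big $J$-function, that $q\mapsto J_Y(q,-z)$ is the \emph{unique} family of elements of $\cLY$ of the form $-z + p\log q + O(z^{-1})$, and that $u\mapsto J_\cX(u,-z)$ is the unique family of elements of $\cLX$ of the form $-z + \fp\log u + O(z^{-1})$.

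First I would treat $Y$, where there is nothing to do beyond combining these facts: Proposition~\ref{thm:flopmirror} gives $I_Y(y,-z)\in\cLY$, and the expansion $I_Y(y,-z) = -z + p\log y + O(z^{-1})$ recorded just before the statement then forces $I_Y(y,-z) = J_Y(y,-z)$; replacing $z$ by $-z$ yields $J_Y(q,z) = I_Y(q,z)$ with $q = y$, so the mirror map is trivial. For $\cX$ one extra line is needed: Proposition~\ref{thm:flopmirror} gives $I_\cX(x,-z)\in\cLX$, and then Proposition~\ref{thm:stuff}(a), applied with $a = -\log x$, shows that $x^{-\lambda/z}I_\cX(x,-z) = \exp\!\big(-(\log x)\lambda/z\big)\,I_\cX(x,-z)$ is still a family of elements of $\cLX$. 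Since $x^{-\lambda/z}I_\cX(x,-z) = -z + \fp\log x + O(z^{-1})$, uniqueness identifies this family with $u\mapsto J_\cX(u,-z)$ at $u = x$, and replacing $z$ by $-z$ gives $J_\cX(u,z) = x^{\lambda/z}I_\cX(u,z)$, again with trivial mirror map.

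The argument carries no real obstacle; the one point worth pausing over --- and precisely the reason the mirror maps are trivial here, unlike in Example~I --- is the two $O(z^{-1})$-expansions invoked above. For $I_Y$ the $d=0$ summand gives $z\,y^{p/z} = z + p\log y + O(z^{-1})$, while each summand with $d\geq 1$ carries the $m=0$ factors $(2\lambda - 2p)$ and $(\lambda - p)$ from its two numerator products and hence has two more powers of $z$ in the denominator than in the numerator, so it contributes only at order $z^{-1}$ or below; the same bookkeeping handles $x^{-\lambda/z}I_\cX(x,-z)$, the cube $(\lambda - \fp + bz)^3$ coming from the three $\cO(-1)$-summands of $\cX$ supplying three ``missing'' powers of $z$. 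This vanishing of the order-$z^0$ correction is exactly the Calabi--Yau condition on the bundle degrees, and checking it is routine.
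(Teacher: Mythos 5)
Your proof is correct and follows exactly the route the paper intends: the paper compresses this to ``arguing exactly as before'' together with the two displayed $O(z^{-1})$-expansions, and you have simply unwound that reference to Corollaries~\ref{cor:KP2mirror} and~\ref{cor:C3Z3mirror} --- placing $I_Y$ and (after an application of Proposition~\ref{thm:stuff}(a) with $a=-\log x$) $x^{-\lambda/z}I_\cX$ on the cones, then invoking uniqueness of elements of the form $-z+\tau+O(z^{-1})$. The closing degree count, showing that the $m=0$ (resp.\ $b=0$) factors kill the order-$z^0$ contribution of each nonzero-degree summand so that the mirror maps degenerate, is exactly the routine check the paper suppresses.
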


\noindent It follows that the Lagrangian submanifold-germs $\cLX$
and $\cLY$ are uniquely determined by
Proposition~\ref{thm:flopmirror}.

\subsection*{The $B$-model Moduli Space and Analytic Continuation}

The $B$-model moduli space $\cM_B$ here is $\PP^1$: it has a
co-ordinate patch (with co-ordinate $x$) corresponding to $\cX$ and a
co-ordinate patch (with co-ordinate $y$) corresponding to $Y$, related
by $y = x^{-1}$.  Regard $I_\cX(x,z)$ as a function on the co-ordinate
patch corresponding to $\cX$ and $I_Y(y,z)$ as a function on the
co-ordinate patch corresponding to $Y$, and denote by
$\widetilde{I}_Y(x,z)$ the analytic continuation of $I_Y$ to a
neighbourhood of $x = 0$.  As before, both $I_\cX$ and
$\widetilde{I}_Y$ have components which form a basis of solutions to
the Picard--Fuchs differential equation
\begin{align*}
  - x D^3 f &=  (\lambda + D)(2 \lambda + 2 D)(2 \lambda + 2 D- z)
  f, & D = z  x\textstyle {\partial \over \partial x}.
\end{align*}
It follows that there exists a $\CC(\!(z^{-1})\!)$-linear isomorphism
$\U:\cHX \to \cHY$ such that $\U(I_\cX(x,-z)) =
\widetilde{I}_Y(x,-z)$.  This is the linear symplectomorphism that we
seek.

The Mellin--Barnes method gives
\begin{multline*}
  \widetilde{I}_Y(x,z) = 
  z\, x^{-\lambda /z} 
  \sum_{k \geq 0} {x^{k+{1\over 2}} \over 2.(2k+1)!} 
  {\Gamma\big(1 +{p \over z}\big)^3 \over 
    \Gamma\big(1 + {\lambda \over z} - k - {1 \over 2}\big)^3}
  \Gamma\big({-k}- \textstyle{1 \over 2}\big)
  \Gamma\big(1 +  \textstyle{2 \lambda - 2p \over z}\big)
  \\
  \Gamma\big(1 +  \textstyle{\lambda - p \over z}\big)
  {\sin \big(\pi \big[ {\lambda - p \over z}\big] \big)
    \sin \big(\pi \big[ {2\lambda - 2p \over z}\big] \big) \over
    \pi \sin \big(\pi \big[ {\lambda - p \over z} - k  - {1 \over 2}\big]
    \big)} \\
  -z\, x^{-\lambda /z} 
  \sum_{k \geq 0} {x^k \over k!(2k)!} 
  {\Gamma\big(1 +{p \over z}\big)^3 \over 
    \Gamma\big(1 + {\lambda \over z} - k\big)^3}
  \Gamma\big(1 +  \textstyle{2 \lambda - 2p \over z}\big)
  \Gamma\big(1 +  \textstyle{\lambda - p \over z}\big)
  {\sin \big(\pi \big[ {2\lambda - 2p \over z}\big] \big) \over
    \pi }
  \\
  \Big( \textstyle H_{2k} + {H_k \over 2} -{ 3 \gamma \over 2} + {1 \over 2} \log
  y - {3 \over 2} \psi\big(1+{\lambda \over z} - k \big) - {\pi \over
    2} \cot\big(\pi\big[ {\lambda - p \over z}\big]\big) \Big),
\end{multline*}
where $\gamma$ is Euler's constant, $\psi(z)$ is the logarithmic
derivative of $\Gamma(z)$, and $H_k$ is the $k$th harmonic number.
Thus
\begin{align*}
  & \U\big(\fun_0\big) = 
  {-{\Gamma\big(1 -{p \over z}\big)^3 \over 
    \Gamma\big(1 - {\lambda \over z} \big)^3}}
  \Gamma\big(1 -  \textstyle{2 \lambda - 2p \over z}\big)
  \Gamma\big(1-  \textstyle{\lambda - p \over z}\big)
  {\sin \big(\pi \big[ {2\lambda - 2p \over z}\big] \big) \over
    \pi } 
  \Big( \textstyle { 3 \gamma \over 2} 
  + {3 \over 2} \psi\big(1-{\lambda \over z} \big) - {\pi \over
    2} \cot\big(\pi\big[ {\lambda - p \over z}\big]\big) \Big), \\
  & \U(\fp) = {- {z \over 2}\, {\Gamma\big(1 -{p \over z}\big)^3 \over 
    \Gamma\big(1 - {\lambda \over z} \big)^3}}
  \Gamma\big(1 -  \textstyle{2 \lambda - 2p \over z}\big)
  \Gamma\big(1 -  \textstyle{\lambda - p \over z}\big)
  {\sin \big(\pi \big[ {2\lambda - 2p \over z}\big] \big) \over
    \pi }, \\
  & \U\big(\fun_{1/2}\big) =
  {-{z^2 \over 4}}
  {\Gamma\big(1 -{p \over z}\big)^3 \over 
    \Gamma\big(1 - {\lambda \over z} - {1 \over 2}\big)^3}
  \Gamma\big({- \textstyle{1 \over 2}}\big)
  \Gamma\big(1 -  \textstyle{2 \lambda - 2p \over z}\big)
    \Gamma\big(1 -  \textstyle{\lambda - p \over z}\big)
  {\sin \big(\pi \big[ {\lambda - p \over z}\big] \big)
    \sin \big(\pi \big[ {2\lambda - 2p \over z}\big] \big) \over
    \pi \sin \big(\pi \big[ {\lambda - p \over z}  + {1 \over 2}\big]
    \big)}.
\end{align*}
Note that
\begin{equation}
  \label{eq:toricflopci}
  \begin{aligned}
    &\U\big(\fun_0\big) = 1 + O\big(z^{-2}\big),\\
    & \U(\fp) = (\lambda-p) + O\big(z^{-2}\big),\\
    & \U\big(\fun_{1/2}\big) = (\lambda - p)^2 + O\big(z^{-1}\big).
  \end{aligned}
\end{equation}
In the non-equivariant limit $\lambda \to 0$, our expressions for $\U$
simplify:
\begin{align*}
  & \U\big(\fun_0\big) \to 
  1 - {\pi^2 p^2 \over 3 z^2}, &
  & \U(\fp) \to {-p}, &
  & \U\big(\fun_{1/2}\big) \to p^2.
\end{align*}

\begin{thm}[A ``Flop Conjecture'' for $\cX$ and $Y$] \label{thm:toricflop}
  There is a choice of analytic continuations of $\cLX$ and $\cLY$
  such that, after analytic continuation, $\U\(\cLX\) = \cLY$.
  Furthermore $\U:\cHX \to \cHY$ is a degree-preserving
  $\CC(\!(z^{-1})\!)$-linear symplectic isomorphism which satisfies
  \begin{itemize}
  \item[(a)] $\U(\fun_\cX) = \fun_Y + O(z^{-1})$;
  \item[(c)] $\U\(\cHX^+ \)\oplus \cHY^- = \cHY$.
  \end{itemize}
\end{thm}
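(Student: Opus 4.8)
The plan is to follow the seven-step template of Examples~I and~II; by the point at which this theorem is stated almost all of the work has already been done, so it remains essentially to assemble the pieces. Proposition~\ref{thm:flopmirror} gives families $x \mapsto I_\cX(x,-z) \in \cLX$ and $y \mapsto I_Y(y,-z) \in \cLY$, and we have already observed that each of $\cLX$ and $\cLY$ is uniquely determined by its $I$-function family. On the $B$-model moduli space $\PP^1$ the two co-ordinate patches are glued by $y = x^{-1}$; the components of $I_\cX$ and of the Mellin--Barnes analytic continuation $\widetilde I_Y$ each form a basis of solutions of the Picard--Fuchs equation near $x = 0$; and matching these bases produces the $\CC(\!(z^{-1})\!)$-linear isomorphism $\U : \cHX \to \cHY$ with $\U\big(I_\cX(x,-z)\big) = \widetilde I_Y(x,-z)$, whose values on $\fun_0$, $\fp$ and $\fun_{1/2}$ are the explicit expressions displayed above.

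The first step is to deduce $\U(\cLX) = \cLY$, which goes word for word as in the proof of Theorem~\ref{thm:C3Z3CRC}: since $\U$ sends the family $x \mapsto I_\cX(x,-z)$ to $\widetilde I_Y(x,-z)$, which is the analytic continuation of the family $y \mapsto I_Y(y,-z)$, and since $\cLX$ (respectively $\cLY$) is uniquely pinned down by the fact that its $I$-function family lies in it, the image $\U(\cLX)$ must coincide with the analytic continuation of $\cLY$. Here the chosen analytic continuation of $\cLX$ is the trivial one (it is already a germ at $x = 0$) and that of $\cLY$ is the one produced above by Mellin--Barnes.

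The second step is to check that $\U$ has the stated properties, which I would do directly from the explicit formulas, just as in Example~I. That $\U$ is a $\CC(\!(z^{-1})\!)$-linear, degree-preserving isomorphism is built into its construction (the two $I$-functions being homogeneous of the same degree, and each basis of solutions being related to the other by an invertible matrix). Condition~(a) is read off from \eqref{eq:toricflopci}: $\U(\fun_\cX) = \U(\fun_0) = \fun_Y + O(z^{-2})$. For condition~(c), \eqref{eq:toricflopci} shows that $\U(\fun_0)$, $\U(\fp)$ and $\U(\fun_{1/2})$ are power series in $z^{-1}$ with no positive powers of $z$, whose leading coefficients $1$, $\lambda - p$ and $(\lambda - p)^2$ form a $\CC(\lambda)$-basis of $H(Y)$; the usual triangular argument applied to the spanning set $\{z^k \U(\phi_i) : k \geq 0\}$ of $\U(\cHX^+)$ then yields $\U(\cHX^+) \oplus \cHY^- = \cHY$. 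Finally, that $\U$ is symplectic is a short direct computation of $\Omega_Y(\U f, \U g)$ on pairs of basis vectors using the displayed formulas for $\U(\fun_0)$, $\U(\fp)$, $\U(\fun_{1/2})$ --- or, more conceptually, it follows from the compatibility of the Picard--Fuchs system with the Poincar\'e pairings, as in the more systematic approaches of \cite{CCIT:crepant1, Iritani:inprogress}. The step requiring the most care is the one already carried out above, namely the Mellin--Barnes continuation and the resulting asymptotic expansions \eqref{eq:toricflopci}; granting those, the argument here is routine.
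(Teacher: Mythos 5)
Your proof is correct and follows exactly the paper's approach: the paper's own proof is the one-line instruction ``Argue as in the proof of Theorem~\ref{thm:C3Z3CRC},'' and that proof consists precisely of the observation that $\U$ was built to send $I_\cX$ to the Mellin--Barnes continuation $\widetilde I_Y$, combined with the already-established facts that $\cLX$ and $\cLY$ are uniquely determined by their $I$-function families. Your extra detail --- reading conditions (a) and (c) off \eqref{eq:toricflopci} via the triangularity argument and noting that symplecticity is a finite check on the explicit formulas --- is exactly the verification the paper waves at in Example~I with ``now that we have an explicit expression for $\U$ it is easy to check these things.''
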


\begin{proof}
  Argue as in the proof of Theorem~\ref{thm:C3Z3CRC}.
\end{proof}

The transformation $\U$ does not satisfy any condition analogous to
property~(b) in Conjecture~\ref{CRC}, but we should not expect this.
Property~(b) arises from the fact that $\U$ intertwines certain
monodromies (let us call them \emph{the relevant monodromies}) of the
system of Picard--Fuchs equations coming from mirror symmetry: see
\cite{CCIT:crepant1}*{Proposition~4.7}.  In the case of toric crepant
resolutions the relevant monodromies generate $H^2(\cX)$, but for
general toric crepant birational transformations this is not the case.
The Mellin--Barnes method will always produce a transformation $\U$
which intertwines the relevant monodromies, but in the case at hand
this is vacuously true as the set of relevant monodromies is empty.
For a general flop
\[
\xymatrix{
  \cX \ar[rd]_{p_1} & & Y \ar[ld]^{p_2} \\
  & Z &}
\]
it is reasonable to expect that property~(b) should be replaced by the
assertion 
\begin{align*}
  \U\circ\big(p_1^\star\alpha \CR\big) = (p_2^\star\alpha \CR\big) \circ \U
  && \text{for all $\alpha \in H^2(Z;\CC)$;}
\end{align*}
this condition is also vacuous here.

\begin{cor}[A Ruan/Bryan--Graber-style Flop Conjecture]
  The $\CC(\lambda)$-linear map $\U_\infty:H(\cX) \to H(Y)$ given by
  \begin{align*}
    &\U_\infty\big(\fun_0\big) = 1, &
    & \U_\infty(\fp) = (\lambda-p), & 
    & \U_\infty\big(\fun_{1/2}\big) = (\lambda - p)^2,
  \end{align*}
  induces an algebra isomorphism between the small quantum cohomology
  of $\cX$ and the algebra obtained from the small quantum cohomology
  of $Y$ by analytic continuation in the quantum parameter $q$
  followed by the substitution $u = q^{-1}$.
\end{cor}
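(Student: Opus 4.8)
The plan is to carry over to the present flop situation the argument by which Corollary~\ref{cor:Ruan} (the quantum-corrected Ruan Conjecture) is deduced from Conjecture~\ref{CRC} in \cite{Coates--Ruan}, with Theorem~\ref{thm:toricflop} playing the role of Conjecture~\ref{CRC} and property~(b) replaced by its --- here vacuous --- flop analogue noted after Theorem~\ref{thm:toricflop}. The underlying principle is that the $T$-equivariant small quantum cohomology of $\cZ$, viewed as a family of algebra structures on $H(\cZ)$ parametrized by the exponentiated flat co-ordinate, is encoded in the germ $\cLZ$ together with its grading: by Proposition~\ref{thm:stuff}(b) the small $J$-function determines $\cLZ$, while conversely, because $\cLZ$ is a cone, each tangent space $T_f\cLZ$ satisfies $z\,T_f\cLZ \subseteq T_f\cLZ$ and the quotient $T_f\cLZ / z\,T_f\cLZ$, canonically identified with $H(\cZ)$, carries the small quantum product at the parameter value corresponding to $f$ (this is precisely the mechanism behind Corollaries~\ref{cor:CCRC} and~\ref{cor:Ruan}). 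Thus a degree-preserving, $\CC(\lambda)(\!(z^{-1})\!)$-linear symplectic isomorphism that matches up the small slices $\{J_\cX(u,-z)\}$ and $\{J_Y(q,-z)\}$ of $\cLX$ and $\cLY$ induces an algebra isomorphism, and this isomorphism is computed by the leading ($z\to\infty$) part of the symplectic isomorphism.

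First I would make the matching of small slices precise. By construction $\U$ satisfies $\U(I_\cX(x,-z)) = \widetilde{I}_Y(x,-z)$, where $\widetilde{I}_Y$ is the analytic continuation of $I_Y$ re-expressed in the co-ordinate $x = y^{-1}$; combining this with the equalities $J_\cX(u,z) = x^{\lambda/z}I_\cX(x,z)$ and $J_Y(q,z) = I_Y(q,z)$, with the $B$-model gluing $y = x^{-1}$, and with the fact that $\U$, being $\CC(\lambda)(\!(z^{-1})\!)$-linear, commutes with multiplication by the scalar $x^{\pm\lambda/z}$, one obtains $\U\bigl(J_\cX(u,-z)\bigr) = \widetilde{J}_Y(q,-z)$ under the change of variables $u = q^{-1}$, where $\widetilde{J}_Y$ denotes $J_Y$ analytically continued along the path fixed in Theorem~\ref{thm:toricflop}; the analytic continuation of the product $\bullet_q$ referred to in the statement is the one induced by this continued $\cLY$. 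Here one must check that no residual ``correction factor'' appears, as it would in the general form of Corollary~\ref{cor:Ruan}: by \eqref{eq:toricflopci} we have $\U(\fun_\cX) = \fun_Y + O(z^{-2})$, so the shift class $c$ of \eqref{eq:defofci} vanishes, and since the mirror maps here are trivial --- equivalently, since $\U(\fun_{1/2}\,z^{-1}) = (\lambda-p)^2 z^{-1} + O(z^{-2})$ forces the relevant class $b_e$ of \eqref{eq:defoffi} to be zero --- the series $f$ vanishes as well, so the small slices match on the nose.

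Next I would transport the quantum product. Since $\U$ is constant in $u$ and $q$ and $\CC(\lambda)(\!(z^{-1})\!)$-linear, it maps $T_f\cLX$ isomorphically onto $T_{\U f}\cLY$ and commutes with multiplication by $z$, hence descends to a $\CC(\lambda)$-linear isomorphism $T_f\cLX/z\,T_f\cLX \to T_{\U f}\cLY/z\,T_{\U f}\cLY$. Under the canonical identifications of these quotients with $H(\cX)$ and $H(Y)$ the descended map is the leading ($z\to\infty$) part of $\U$, which by \eqref{eq:toricflopci} is exactly the map $\U_\infty$; and because $\U\bigl(J_\cX(u,-z)\bigr) = \widetilde{J}_Y(q,-z)$ with $u = q^{-1}$, the source quotient is $(H(\cX),\bullet_u)$ and the target quotient is the analytic continuation of $(H(Y),\bullet_q)$ with $q = u^{-1}$. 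It follows that $\U_\infty$ is an algebra isomorphism between the small quantum cohomology of $\cX$ and the algebra obtained from that of $Y$ by analytic continuation in $q$ followed by the substitution $u = q^{-1}$. Finally, the normalizations $\U_\infty(\fun_0) = 1$, $\U_\infty(\fp) = \lambda - p$, $\U_\infty(\fun_{1/2}) = (\lambda-p)^2$ asserted in the statement are precisely the values recorded in \eqref{eq:toricflopci}, which completes the proof.

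The step I expect to be the main obstacle is the first one: pinning down the change of variables $u = q^{-1}$ with no residual correction factor. In the crepant-resolution case the change of variables carries exponential factors $e^{c^i + f^i}$ and the symplectic isomorphism matches the small slices only up to a lower-triangular loop-group factor, and one has to verify here that \eqref{eq:toricflopci}, together with the triviality of the mirror maps and the vanishing of the relevant $b_e$, forces all of these corrections to be trivial; one should also confirm that the coordinate change $u = q^{-1}$ coming from the $B$-model gluing $y = x^{-1}$ correctly matches the exponentiated flat parameters on the two sides. The remaining bookkeeping --- the identification $T_f\cLZ/z\,T_f\cLZ \cong H(\cZ)$ and the passage to the semiclassical limit --- is the standard argument of \cite{Coates--Ruan}, which applies verbatim once property~(b) is replaced by its flop analogue, which in the case at hand is vacuous.
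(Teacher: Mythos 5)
Your proposal is correct and takes essentially the same approach as the paper: the paper's proof is the one-liner ``Look at equation \eqref{eq:toricflopci}, and then apply the discussion in \cite{Coates--Ruan}*{\S9},'' and you have simply unpacked that reference --- the identification of the small quantum product on $T_f\cLZ/z\,T_f\cLZ$, the matching of small slices via $\U(I_\cX(x,-z)) = \widetilde{I}_Y(x,-z)$ together with trivial mirror maps and the gluing $y = x^{-1}$, the vanishing of the shift class $c$ and of the relevant $b_e$, and the identification of $\U_\infty$ as the leading-order part of $\U$ recorded in \eqref{eq:toricflopci} --- in explicit detail.
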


\begin{proof}
  Look at equation \eqref{eq:toricflopci}, and then apply the
  discussion in \cite{Coates--Ruan}*{\S9}.
\end{proof}

\begin{bibdiv}
\begin{biblist}

\bib{AGV:1}{article}{
   author={Abramovich, Dan},
   author={Graber, Tom},
   author={Vistoli, Angelo},
   title={Algebraic orbifold quantum products},
   conference={
      title={Orbifolds in mathematics and physics},
      address={Madison, WI},
      date={2001},
   },
   book={
      series={Contemp. Math.},
      volume={310},
      publisher={Amer. Math. Soc.},
      place={Providence, RI},
   },
   date={2002},
    pages={1--24},
   review={\MR{1950940 (2004c:14104)}},
}

\bib{AGV:2}{article}{
  author={Abramovich, Dan},
  author={Graber, Tom},
  author={Vistoli, Angelo},
  title={Gromov--Witten theory of Deligne--Mumford stacks},
  date={2006},
  eprint={arXiv:math.AG/0603151},
}

\bib{ABK}{article}{
  author={Aganagic, Mina},
  author={Bouchard, Vincent},
  author={Klemm, Albrecht},
  title={Topological Strings and (Almost) Modular Forms},
  date={2006},
  eprint={hep-th/0607100},
}
  
\bib{Audin}{book}{
  author={Audin, Mich{\`e}le},
  title={Torus actions on symplectic manifolds},
  series={Progress in Mathematics},
  volume={93},
  publisher={Birkh\"auser Verlag},
  place={Basel},
  date={2004},
  pages={viii+325},
  isbn={3-7643-2176-8},
  review={\MR{2091310 (2005k:53158)}},
}

\bib{Barannikov:periods}{article}{
  author={Barannikov, Serguei},
  title={Quantum periods. I. Semi-infinite variations of Hodge structures},
  journal={Internat. Math. Res. Notices},
  date={2001},
  number={23},
  pages={1243--1264},
  issn={1073-7928},
  review={\MR{1866443 (2002k:32017)}},
}

\bib{Bertram--Kley}{article}{
   author={Bertram, Aaron},
   author={Kley, Holger P.},
   title={New recursions for genus-zero Gromov-Witten invariants},
   journal={Topology},
   volume={44},
   date={2005},
   number={1},
   pages={1--24},
   issn={0040-9383},
   review={\MR{2103998 (2005h:14131)}},
}

\bib{Boissiere--Mann--Perroni:1}{article}{
  title = {Crepant resolutions of weighted projective spaces and
    quantum deformations},
  author = {Boissiere, Samuel},
  author = {Mann, Etienne},
  author = {Perroni, Fabio},
  date={2006},
  eprint = {arXiv:math/0610617v2},
}
\bib{Boissiere--Mann--Perroni:2}{article}{
  title = {The cohomological crepant resolution conjecture for $\PP(1,3,4,4)$},
  author = {Boissiere, Samuel},
  author = {Mann, Etienne},
  author = {Perroni, Fabio},
  date = {2007},
  eprint = {arXiv:0712.3248v1},
}

\bib{Brini--Tanzini}{article}{ 
  author = {Brini, Andrea},
  author = {Tanzini, Alessandro},
  title = {Exact results for topological strings on resolved $Y^{p,q}$
    singularities},
  date = {2008},
  eprint = {arXiv:0804:2598v1},
}

\bib{Bryan--Gholampour}{article}{
  title = {Hurwitz-Hodge Integrals, the E6 and D4 root systems, and
    the Crepant Resolution Conjecture},
  author = {Bryan, Jim},
  author = {Gholampour, Amin },
  date= {2007},
  eprint = {arXiv:0708.4244v1},
}

\bib{Bryan--Graber}{article}{
  author = {Bryan, Jim},
  author = {Graber, Tom},
  title = {The Crepant Resolution Conjecture},
  date = {2006},
  eprint = {arXiv:math.AG/0610129},
}

\bib{Bryan--Graber--Pandharipande}{article}{
  title = {The orbifold quantum cohomology of $\CC^2/\ZZ_3$ and
    Hurwitz--Hodge integrals},
  author = {Bryan, Jim},
  author = {Graber, Tom},
  author = {Pandharipande, Rahul},
  date = {2005},
  eprint = {arXiv:math/0510335v1},
}

\bib{CDGP}{article}{
   author={Candelas, Philip},
   author={de la Ossa, Xenia C.},
   author={Green, Paul S.},
   author={Parkes, Linda},
   title={A pair of Calabi-Yau manifolds as an exactly soluble
   superconformal theory},
   journal={Nuclear Phys. B},
   volume={359},
   date={1991},
   number={1},
   pages={21--74},
   issn={0550-3213},
   review={\MR{1115626 (93b:32029)}},
}

\bib{Cadman--Cavalieri}{article}{
  title = {Gerby Localization, $\ZZ_3$-Hodge Integrals and the GW
    Theory of $\CC^3/\ZZ_3$},
  author = {Cadman, Charles},
  author = {Cavalieri, Renzo},
  date = {2007},
  eprint = {arXiv:0705.2158v3},
  }

\bib{Chen--Ruan:orbifolds}{article}{
   author={Chen, Weimin},
   author={Ruan, Yongbin},
   title={A new cohomology theory of orbifold},
   journal={Comm. Math. Phys.},
   volume={248},
   date={2004},
   number={1},
   pages={1--31},
   issn={0010-3616},
   review={\MR{2104605 (2005j:57036)}},
}

\bib{Chen--Ruan:GW}{article}{
   author={Chen, Weimin},
   author={Ruan, Yongbin},
   title={Orbifold Gromov--Witten theory},
   conference={
      title={Orbifolds in mathematics and physics},
      address={Madison, WI},
      date={2001},
   },
   book={
      series={Contemp. Math.},
      volume={310},
      publisher={Amer. Math. Soc.},
      place={Providence, RI},
   },
   date={2002},
   pages={25--85},
   review={\MR{1950941 (2004k:53145)}},
}

\bib{Chiang--Klemm--Yau--Zaslow}{article}{
   author={Chiang, T.-M.},
   author={Klemm, A.},
   author={Yau, S.-T.},
   author={Zaslow, E.},
   title={Local mirror symmetry: calculations and interpretations},
   journal={Adv. Theor. Math. Phys.},
   volume={3},
   date={1999},
   number={3},
   pages={495--565},
   issn={1095-0761},
   review={\MR{1797015 (2002e:14064)}},
}

\bib{CCIT:crepant1}{article}{
  title = {Wall-Crossings in Toric Gromov--Witten Theory I: Crepant
    Examples},
  author = {Tom Coates},
  author = {Alessio Corti},
  author = {Hiroshi Iritani},
  author = {Hsian-Hua Tseng},
  date = {2006},
  eprint = {arXiv:math.AG/0611550}
}

\bib{CCIT:computing}{article}{
  title = {Computing Genus-Zero Twisted Gromov-Witten Invariants},
  author = {Tom Coates},
  author = {Alessio Corti},
  author = {Hiroshi Iritani},
  author = {Hsian-Hua Tseng},
  date = {2007},
  eprint = {arXiv:math/0702234},
}

\bib{CCIT:typeA}{article}{
  title = {The Crepant Resolution Conjecture for Type A Surface
    Singularities},
  author = {Tom Coates},
  author = {Alessio Corti},
  author = {Hiroshi Iritani},
  author = {Hsian-Hua Tseng},
  date = {2007},
  eprint = {arXiv:0704.2034v2},
}

\bib{CCIT:stacks}{article}{
  title = {The Small Quantum Orbifold Cohomology of Fano Toric Deligne--Mumford
    Stacks},
  author = {Tom Coates},
  author = {Alessio Corti},
  author = {Hiroshi Iritani},
  author = {Hsian-Hua Tseng},
  status = {in preparation},
}

\bib{CCLT}{article}{
  title={The Quantum Orbifold Cohomology of Weighted Projective Space},
  author={Coates, Tom},
  author={Corti, Alessio},
  author={Lee, Yuan-Pin},
  author={Tseng, Hsian-Hua},
  date={2006},
  eprint={arXiv:math.AG/0608481},
}

\bib{Coates--Givental:QRRLS}{article}{
   author={Coates, Tom},
   author={Givental, Alexander},
   title={Quantum Riemann--Roch, Lefschetz and Serre},
   journal={Ann. of Math. (2)},
   volume={165},
   date={2007},
   number={1},
   pages={15--53},
   issn={0003-486X},
   review={\MR{2276766 (2007k:14113)}},
}

\bib{Coates--Ruan}{article}{
  title={Quantum Cohomology and Crepant Resolutions: A Conjecture},
  author={Coates, Tom},
  author={Ruan, Yongbin},
  date={2007},
  eprint={arXiv:0710.5901v2},
}

\bib{Corti}{article}{
   author={Corti, Alessio},
   title={What is$\dots$a flip?},
   journal={Notices Amer. Math. Soc.},
   volume={51},
   date={2004},
   number={11},
   pages={1350--1351},
   issn={0002-9920},
   review={\MR{2105240}},
}
		
 \bib{Dubrovin}{article}{
    author={Dubrovin, Boris},
    title={Geometry of $2$D topological field theories},
    conference={
       title={Integrable systems and quantum groups},
       address={Montecatini Terme},
       date={1993},
    },
    book={
       series={Lecture Notes in Math.},
       volume={1620},
       publisher={Springer},
       place={Berlin},
    },
    date={1996},
    pages={120--348},
    review={\MR{1397274 (97d:58038)}},
 }
		
\bib{Elezi}{article}{
   author={Elezi, Artur},
   title={Mirror symmetry for concavex vector bundles on projective spaces},
   journal={Int. J. Math. Math. Sci.},
   date={2003},
   number={3},
   pages={159--197},
   issn={0161-1712},
   review={\MR{1903033 (2004c:14106)}},
}

\bib{Gillam}{article}{
  title = {The Crepant Resolution Conjecture for 3-dimensional flags
    modulo an involution},
  author = {Gillam, W. D.},
  date = {2007},
  eprint = {arXiv:0708.0842v1},
}

\bib{Givental:equivariant}{article}{
   author={Givental, Alexander B.},
   title={Equivariant Gromov-Witten invariants},
   journal={Internat. Math. Res. Notices},
   date={1996},
   number={13},
   pages={613--663},
   issn={1073-7928},
   review={\MR{1408320 (97e:14015)}},
}
		
\bib{Givental:toric}{article}{
   author={Givental, Alexander B.},
   title={A mirror theorem for toric complete intersections},
   conference={
      title={Topological field theory, primitive forms and related topics
      (Kyoto, 1996)},
   },
   book={
      series={Progr. Math.},
      volume={160},
      publisher={Birkh\"auser Boston},
      place={Boston, MA},
   },
   date={1998},
   pages={141--175},
   review={\MR{1653024 (2000a:14063)}},
}

\bib{Givental:elliptic}{article}{
   author={Givental, Alexander},
   title={Elliptic Gromov-Witten invariants and the generalized mirror
   conjecture},
   conference={
      title={Integrable systems and algebraic geometry},
      address={Kobe/Kyoto},
      date={1997},
   },
   book={
      publisher={World Sci. Publ., River Edge, NJ},
   },
   date={1998},
   pages={107--155},
   review={\MR{1672116 (2000b:14074)}},
}
	
 \bib{Givental:quantization}{article}{
   author={Givental, Alexander B.},
   title={Gromov-Witten invariants and quantization of quadratic
     Hamiltonians},
   language={English, with English and Russian summaries},
   journal={Mosc. Math. J.},
   volume={1},
   date={2001},
   number={4},
   pages={551--568, 645},
   issn={1609-3321},
   review={\MR{1901075 (2003j:53138)}},
 } 

\bib{Givental:symplectic}{article}{
  author={Givental, Alexander B.},
  title={Symplectic geometry of Frobenius structures},
  conference={
    title={Frobenius manifolds},
  },
  book={
    series={Aspects Math., E36},
    publisher={Vieweg},
    place={Wiesbaden},
  },
  date={2004},
  pages={91--112},
  review={\MR{2115767 (2005m:53172)}},
}

\bib{Graber--Pandharipande}{article}{
   author={Graber, T.},
   author={Pandharipande, R.},
   title={Localization of virtual classes},
   journal={Invent. Math.},
   volume={135},
   date={1999},
   number={2},
   pages={487--518},
   issn={0020-9910},
   review={\MR{1666787 (2000h:14005)}},
}

\bib{Hori--Vafa}{article}{
  author={Hori, Kentaro},
  author={Vafa, Cumrun}, 
  title={Mirror symmetry}, 
  date={2000},
  eprint={arXiv: hep-th/0002222}
}

\bib{Horja}{article}{
   author={Horja, Paul R.},
   title={Hypergeometric functions and mirror symmetry in toric varieties},
   date = {1999},
   eprint={math.AG/9912109}
}

\bib{Iritani:gen}{article}{
  author={Iritani, Hiroshi}, 
  title={Quantum $D$-modules and generalized mirror transformations} ,
  date = {2004},
  eprint={arXiv:math.DG/0411111} 
}  

\bib{Iritani:integral}{article}{
  author={Iritani, Hiroshi}, 
  title={Real and integral structures in quantum cohomology I: toric
    orbifolds},
  date = {2007},
  eprint={arXiv:0712.2204v2},
}

\bib{Iritani:inprogress}{article}{
  author={Iritani, Hiroshi},
  title={Wall-Crossings in Toric Gromov--Witten Theory III},
  status={work in progress},
}  

\bib{Kontsevich--Manin:GW}{article}{
  author={Kontsevich, M.},
  author={Manin, Yu.},
  title={Gromov-Witten classes, quantum cohomology, and enumerative
    geometry},
  journal={Comm. Math. Phys.},
  volume={164},
  date={1994},
  number={3},
  pages={525--562},
  issn={0010-3616},
  review={\MR{1291244 (95i:14049)}},
}

 \bib{Lee--Pandharipande}{article}{
    author={Lee, Y.-P.},
    author={Pandharipande, R.},
    title={A reconstruction theorem in quantum cohomology and quantum
    $K$-theory},
    journal={Amer. J. Math.},
    volume={126},
    date={2004},
    number={6},
    pages={1367--1379},
    issn={0002-9327},
    review={\MR{2102400 (2006c:14082)}},
 }

\bib{Lian--Liu--Yau:I}{article}{
   author={Lian, Bong H.},
   author={Liu, Kefeng},
   author={Yau, Shing-Tung},
   title={Mirror principle. I},
   journal={Asian J. Math.},
   volume={1},
   date={1997},
   number={4},
   pages={729--763},
   issn={1093-6106},
   review={\MR{1621573 (99e:14062)}},
}	

\bib{Perroni}{article}{
   author={Perroni, Fabio},
   title={Chen-Ruan cohomology of $ADE$ singularities},
   journal={Internat. J. Math.},
   volume={18},
   date={2007},
   number={9},
   pages={1009--1059},
   issn={0129-167X},
   review={\MR{2360646}},
}

\bib{Reid}{article}{
   author={Reid, Miles},
   title={Young person's guide to canonical singularities},
   conference={
      title={Algebraic geometry, Bowdoin, 1985},
      address={Brunswick, Maine},
      date={1985},
   },
   book={
      series={Proc. Sympos. Pure Math.},
      volume={46},
      publisher={Amer. Math. Soc.},
      place={Providence, RI},
   },
   date={1987},
   pages={345--414},
   review={\MR{927963 (89b:14016)}},
}

\bib{Rose}{article}{
  author={Rose, Michael}
  title={A Reconstruction theorem for genus zero 
    Gromov--Witten invariants of stacks.} 
  date = {2006},
  eprint={math.AG/0605776}
}

\bib{Ruan:CCRC}{article}{
   author={Ruan, Yongbin},
   title={The cohomology ring of crepant resolutions of orbifolds},
   conference={
      title={Gromov-Witten theory of spin curves and orbifolds},
   },
   book={
      series={Contemp. Math.},
      volume={403},
      publisher={Amer. Math. Soc.},
      place={Providence, RI},
   },
   date={2006},
   pages={117--126},
   review={\MR{2234886 (2007e:14093)}},
}
		
\bib{Wise}{article}{
  title = {The genus zero Gromov-Witten invariants of $\big[\mathop{Sym}^2
    \PP^2\big]$},
  author = {Wise, Jonathan },
  date = {2007},
  eprint = {arXiv:math/0702219v2},
}

\end{biblist}
\end{bibdiv}

\end{document}